\documentclass[11pt]{article}
\usepackage[margin=1in]{geometry}

\usepackage{amsmath,amssymb,amsthm,mathrsfs}
\usepackage{bm}
\usepackage{amscd}
\usepackage{enumitem}
\usepackage{graphicx}
\usepackage[dvipsnames]{xcolor}
\usepackage[colorlinks=true,linkcolor=blue,citecolor=blue,urlcolor=blue]{hyperref}

\theoremstyle{plain}

\numberwithin{equation}{section}
\newtheorem{theorem}{Theorem}[section]
\newtheorem{proposition}[theorem]{Proposition}
\newtheorem{lemma}{Lemma}[section]
\newtheorem{corollary}[theorem]{Corollary}

\theoremstyle{definition}

\newtheorem{definition}[theorem]{Definition}
\newtheorem{remark}{Remark}[section]

\allowdisplaybreaks[4] 

\def\R{\mathbb{R}}

\def\dist{\operatorname{dist}}

\def\div{\operatorname{div}}

\def\<{\langle} \def\>{\rangle}

\def\L{\mathcal{L}}

\newcommand{\Bu}{{\boldsymbol{u}}}

\newcommand{\Be}{{\boldsymbol{e}}}

\newcommand{\Bx}{\boldsymbol{x}}
\def\dd{\mathop{}\!\mathrm{d}}

\newcommand{\ignore}[1]{}

\newcommand{\stkout}[1]{\ifmmode\text{\sout{\ensuremath{#1}}}\else\sout{#1}\fi}
\usepackage{algorithm, algorithmic}
\title{Asymptotic long-time behavior of Darcy--Boussinesq convection in layered porous media with narrow transition zones}
\author{%
Kaijian Sha$^{1}$\thanks{Email: kjsha11@eitech.edu.cn}
\and
Xiaoming Wang$^{1}$\thanks{Email: wxm.math@outlook.com. Corresponding author.}
\and
Hao Wu$^{2}$\thanks{Email: haowufd@fudan.edu.cn}
\\[0.5em]
\small $^1$ $^1$ School of Mathematical Sciences, Eastern Institute of Technology, Ningbo, China\\
\small $^2$ $^2$ School of Mathematical Sciences, Fudan University, Shanghai 200433, China
}
\date{}

\begin{document}

\maketitle

\begin{abstract}
We study the asymptotic long-time behavior of Darcy--Boussinesq convection in layered porous media with narrow transition zones in the material properties. As the transition-layer width tends to zero, we prove the upper semi-continuous convergence of the global attractor, invariant measure, and Nusselt number to their counterparts in the limiting sharp-interface model. We also show that the global attractors have finite fractal dimensions, with an explicit upper bound uniform in the transition-layer width.
The analysis combines a carefully designed background temperature/contaminant profile together with a novel choice of phase space that ensures global well-posedness of the model and asymptotic compactness of the solution semigroup, and a new interpolation inequality. The phase space is associated with fractional powers of the principal elliptic operator with discontinuous coefficients. These results provide a rigorous long-time validation of the sharp-interface Darcy--Boussinesq model and extend our earlier finite-time convergence theory (H. Dong and X. Wang, SIAM J. Appl. Math. 85 (2025), 1621--1642) to the long-time regime.
\end{abstract}

\noindent\textbf{Keywords:} Convection in layered porous media, Darcy--Boussinesq model, global attractor, invariant measure, Nusselt number, vanishing material interface limit, upper semi-continuity, interpolation inequality\\
\textbf{MSC 2020:} 35Q35, 35Q86, 76D03, 76S99, 76R99
\medskip


\section{Introduction}

Fluid convection in porous structures plays a fundamental role in both natural phenomena and technological applications. This subject has attracted sustained attention over the decades; see Nield and Bejan \cite{nield2017convection} for an overview. Many mathematical studies have analyzed convection in porous settings, including well-posedness, regularity, long-time behavior, and transport bounds \cite{doering1998jfm, fabrie1986aam, fabrie1996aa, ly1999jns, oliver2000gevrey, otero2004jfm, WCC2015, DPZS2022, CNW2025}. With the exception of recent contributions such as \cite{CNW2025, DW2025}, this body of work has focused primarily on media with uniform (spatially constant) material properties.

In practice, porous formations are often stratified or otherwise heterogeneous. Variations in permeability and transport coefficients occur frequently in distinct layers, arising from geological processes (e.g., sediment deposition, tectonic activity, dissolution) or from engineered design. A prominent application is the geological storage of carbon dioxide (CO$_2$), where layering can markedly influence stability, plume morphology, and convective transport; see, for instance, \cite{bickle2007modelling, huppert2014ar, hewitt2014jfm, hewitt2020jfm, hewitt2022jfm, mckibbin1980jfm, mckibbin1981heat, mckibbin1983thermal, sahu2017tansp, salibindla2018jfm, wooding1997convection, DePaoli2021, HFCJ2012}. The review by Huppert and Neufeld \cite{huppert2014ar} summarizes much of the literature, while \cite{BS2024} documents more recent progress on the role of layering in modifying---and, in some regimes, enhancing---convective transport.

A widely used simplification is the sharp-interface description, in which material parameters are modeled as piecewise constant—uniform within each layer and discontinuous across interfaces—and adjacent layers are coupled through transmission conditions \cite{hewitt2014jfm, hewitt2020jfm, hewitt2022jfm, mckibbin1980jfm, mckibbin1981heat, mckibbin1983thermal, CNW2025}. This formulation is mathematically convenient and often effective at macroscopic scales. However, it is also idealized: at microscopic scales, diffusive and mixing mechanisms smooth abrupt parameter jumps, so infinitely sharp transitions are not physically realizable. Moreover, while equations for homogeneous porous flow admit rigorous derivations (e.g., via homogenization), a systematic derivation of the sharp-interface model and its interfacial conditions is subtler in layered settings. In particular, the precise form of the transmission conditions---especially which components of velocity and stress should be continuous across an interface---can depend on the upscaling assumptions and the modeling regime and, therefore, merits mathematical justification.

A more faithful representation allows for diffuse interfaces, where coefficients vary smoothly across thin transition layers. Following \cite{diffuse98, SMCTSH17}, we refer to this as the diffuse-interface model. A natural theoretical question is whether the sharp-interface formulation (together with its transmission conditions) can be justified as the singular limit of the diffuse-interface model as the transition thickness tends to zero.

In previous work \cite{DW2025}, we established convergence of the diffuse-interface model to the sharp-interface model on finite time intervals, under an initial regularity assumption formulated in terms of the system’s linear operator. Despite near-singular gradients in thin transition zones, the sharp-interface dynamics is recovered in this short-time regime. For many applications, however, the quantities of interest are intrinsically long-time or statistical, for instance, invariant measures describing statistically steady states and long-time averaged transport metrics such as the Nusselt number.

The purpose of this study is to precisely address the asymptotic behavior of the model in the long-time regime. We prove that, as the thickness of the transition layer $\varepsilon\to 0$, the global attractors, invariant measures, and the Nusselt number of the diffuse-interface model \eqref{Diffuse}–\eqref{bc0-epsilon} converge upper semicontinuously to their counterparts for the sharp-interface model \eqref{Sharp}–\eqref{bc0}. In addition, we show that the global attractors have finite fractal dimensions, and we obtain an explicit upper bound that is uniform in $\varepsilon$.

The analysis differs substantially from \cite{DW2025}. Long-time theory requires a phase space that is (i) regular enough to control the nonlinear term globally in time and (ii) sufficiently large to establish asymptotic compactness of the solution semigroup uniformly with respect to $\varepsilon$. To this end, we introduce a phase space defined via fractional powers of the principal elliptic operator of the sharp-interface model (with discontinuous coefficients). A key new ingredient is an interpolation inequality linking this fractional-power space to standard Sobolev spaces; it yields global-in-time well-posedness and uniform-in-$\varepsilon$ \textit{a priori} bounds. Asymptotic compactness is obtained from uniform-in-time $H^1$-estimates via a uniform Gronwall-type argument. Finally, the finite-dimensionality of the attractor follows from a squeezing-type property of the semigroup, again with constants uniform in the transition-layer thickness. Altogether, the results provide a rigorous long-time justification of the sharp-interface Darcy--Boussinesq model and extend the finite-time convergence theory of \cite{DW2025} to the long-time regime.

The remainder of the paper is organized as follows. Section 2 introduces the sharp- and diffuse-interface formulations. Section 3 establishes preliminary estimates. Section 4 proves existence of global attractors with bounds uniform in the interface thickness. Section 5 proves upper semicontinuity for global attractors, invariant measures, and the Nusselt number. Section 6 derives dimension estimates. Section 7 concludes with remarks, and the Appendix collects auxiliary estimates and the proof of the new interpolation inequality.

\section{Model Description}

We consider convection in a three-dimensional layered domain $\Omega = (0,L)^2\times(-H,0)$, $L, H>0$. The domain $\Omega$ is divided into $l$ flat `layers' or `strips' by $l-1$ interfaces located at $z= z_j \in (-H,0), j=1,2,\cdots,l-1$. In addition, we denote $z_0=0, z_l=-H$. The $j$-th layer is given by  
\[\Omega_j: = \{\Bx\in \Omega:~\Bx \cdot \Be_z\in (z_{j},z_{j-1})\},\quad \text{ for }j=1,2,\cdots,l,\]
where $\bm{x}=(x,y,z)$ and $\Be_{z}$ stands for the unit vector in the $z$-direction. 

\subsection{The sharp interface model}
Let $T>0$. The Darcy--Boussinesq system describing   convection in the layered domain $\Omega$ takes the following form (see \cite{nield2017convection}):
\begin{equation}\label{Sharp}
\left\{\begin{aligned}
  &\Bu=-\frac{K}{\mu}\left(\nabla P + \rho_{0}(1+\alpha\phi)g\Be_{z}\right),\\
  &\nabla \cdot \Bu =0,\\
  &b\partial_t \phi  + \Bu\cdot\nabla\phi - \nabla \cdot(bD\nabla\phi)=0,
\end{aligned}\right.\qquad \text{in}\ \Omega\times (0,T).
\end{equation}
Here, the unknowns $\Bu:\Omega\times (0,T)\to \mathbb{R}^3$, $P:\Omega\times (0,T)\to \mathbb{R}$, and $\phi: \Omega \times (0,T)\to \mathbb{R}$ are the fluid velocity, pressure, and concentration, respectively. The constants $\rho_{0}$, $\alpha$, $\mu$, $g$ are the reference fluid density, expansion coefficient, dynamic viscosity, and the gravity acceleration. In addition, $K$, $b$, $D$ represent the permeability, porosity, and diffusivity coefficients, respectively. 

For the idealized sharp interface model, 
in each layer $\Omega_{j}$, the permeability, porosity, and diffusivity coefficients are assumed to be constant, that is,
\[
K=K(\boldsymbol{x})=K_{j}, \quad b=b(\boldsymbol{x})=b_{j}, \quad D=D(\boldsymbol{x})=D_{j}, \quad \forall\, \boldsymbol{x}\in\Omega_{j}, \quad 1\leq j\leq l,
\]
for a set of positive constants $\{K_{j},b_{j},D_{j}\}_{j=1}^{l}$. On the interfaces, we assume
\begin{equation}
\label{interface-1}
  \Bu\cdot\Be_{z},\ P,\ \phi,\text{ are continuous at } z=z_{j},\quad 1\leq  j\leq  l-1,
\end{equation}
%
%
In addition, the system \eqref{Sharp} is supplemented with the initial condition
\begin{equation}\label{initialdata}
  \phi|_{t=0}=\phi_{0}
\end{equation}
and the boundary conditions
\begin{equation}\label{bc0}
\Bu\cdot \Be_z|_{z=0,-H}= 0,\quad \phi|_{z=0}=c_0, \quad \phi|_{z=-H}=c_{-H},
\end{equation}
together with periodicity in the horizontal directions $(x,y)$. 


\subsection{The diffuse interface model}
While the sharp interface model allows the physical parameters to change suddenly at the interfaces $z=z_j$, a physically more realistic setting would allow the material parameters to change gradually, albeit over a thin region of thickness $\varepsilon$, due to local diffusion.
We assume that
\[ 0<\varepsilon \ll  z_j-z_{j+1},\quad\forall\, j=0,1,\cdots, l-1.\]
This implies that the gradual transition layers are much thinner than the macroscopic layers. In the diffuse interface framework, the governing equations take the following form
\begin{equation}\label{Diffuse}
\left\{\begin{aligned}
  &\Bu^\varepsilon=-\frac{K^\varepsilon}{\mu}\left(\nabla P^\varepsilon + \rho_{0}(1+\alpha\phi^\varepsilon)g\Be_{z}\right),\\
  &\nabla \cdot \Bu^\varepsilon =0,\\
  &b^\varepsilon\partial_t \phi^\varepsilon  + \Bu^\varepsilon\cdot\nabla\phi^\varepsilon - \nabla \cdot(b^\varepsilon D^\varepsilon\nabla\phi^\varepsilon)=0,
\end{aligned}\right.\qquad \text{in}\ \Omega\times (0,T).
\end{equation}
Here, the unknowns $\Bu^\varepsilon:\Omega\times (0,T)\to \mathbb{R}^3$, $P^\varepsilon:\Omega\times (0,T)\to \mathbb{R}$, and $\phi^\varepsilon: \Omega \times (0,T)\to \mathbb{R}$ are the fluid velocity, pressure, and concentration, respectively. $K^\varepsilon$, $b^\varepsilon$, $D^\varepsilon$ represent the permeability, porosity, and diffusivity coefficients that are essentially constant in each of the $\Omega_j$, but with a narrow thin layer of width $2\varepsilon$ that allows a smooth transition between the neighboring strips. More precisely, $K^\varepsilon$, $b^\varepsilon$, $D^\varepsilon$ are smooth over $\Omega$ and 
\begin{align*}
& K^\varepsilon=K_j,~b^\varepsilon= b_j,~D^\varepsilon= D_j,\quad 
\forall\, z\in [z_{j}+\varepsilon,z_{j-1}-\varepsilon],\quad 2\leq  j\leq  l-1, \\
& K^\varepsilon=K_1,~b^\varepsilon= b_1,~D^\varepsilon= D_1,\quad \forall\, z\in [z_1+\varepsilon,z_0],\\
& K^\varepsilon=K_l,~b^\varepsilon= b_l,~D^\varepsilon= D_l,\quad \forall\, z\in [z_l,z_{l-1}-\varepsilon].
\end{align*} 
For simplicity, we assume that $K^\varepsilon$, $D^\varepsilon$, $b^\varepsilon$ are continuous piecewise linear functions along the $z$-direction, for example,
\[K^\varepsilon(z) = K_{j-1} +(z-z_j+\varepsilon) \frac{K_j-K_{j-1}}{2\varepsilon},\quad \forall\,z\in (z_j-\varepsilon,z_j+\varepsilon),\]
for $j=1,2,\cdots,l-1$. 
The system \eqref{Diffuse} is supplemented with the initial condition
\begin{equation}\label{initialdata-epsilon}
  \phi^\varepsilon|_{t=0}=\phi_{0}^\varepsilon
\end{equation}
and the boundary conditions
\begin{equation}\label{bc0-epsilon}
\Bu^\varepsilon\cdot \Be_z|_{z=0,-H}= 0,\quad 
\phi^\varepsilon|_{z=0}=c_0, \quad \phi^\varepsilon|_{z=-H}=c_{-H}, 
\end{equation}
together with periodicity in the horizontal directions $(x,y)$.

\subsection{Homogenization of boundary conditions and the reduced systems}
For the sake of simplicity, in the subsequent analysis, we assume 
$$\mu = 1, \quad \rho_{0} \alpha g =1, \quad b=1.$$ 
Next, to deal with the nonhomogeneous boundary condition of $\phi$ and $\phi^\varepsilon$, we introduce a smooth function $\phi_b(z;\delta)$ on $[-H,0]$ satisfying
\begin{equation}\label{phi_b1}
\phi_b(z;\delta)= \left\{\begin{aligned}
& c_0, && z = 0,\\
& \frac{c_0+c_{-H}}{2}&&z \in (-H+\delta,-\delta),\\
& c_{-H}, && z=-H
\end{aligned}\right.
\end{equation}
and 
\begin{equation}\label{phi_b2}
|\phi_b'| \leq \frac{c_\Delta}{\delta}, 
\quad 
|\phi_b''| \leq \frac{2c_\Delta}{\delta^2},
\end{equation}
where $c_\Delta := |c_0-c_{-H}|$ and $0<\delta\ll 1$ is a small parameter to be determined later.

Taking
\[ \psi =\phi -  \phi_b  \quad 
\text{ and }  \quad p = P -\rho_0 g z - \alpha \rho_0 \int_{-H}^z\phi_b(\tau)\dd \tau,
\]
we can write the sharp interface model \eqref{Sharp} as 
\begin{equation}\label{Sharps}
      \left\{\begin{aligned}
        &\Bu=- K \left(\nabla p  + \psi \Be_{z}\right),\\
        &\nabla \cdot \Bu =0,\\
        & \partial_t \psi + \Bu\cdot\nabla \psi  +  \phi_b'\Bu \cdot \Be_z   - \nabla \cdot (D\nabla\psi)=D \varphi_b'',
      \end{aligned}\right.  
  \end{equation}
which is supplemented with the interfacial boundary conditions 
\begin{equation}\label{interface1}
  \Bu\cdot\Be_{z},\ p,\ \psi\ \ \text{ are continuous at } z=z_{j},\ 1\leq  j\leq  l-1,
\end{equation}
the initial condition
\begin{equation}\label{initial}
  \psi|_{t=0}=\psi_0:=\phi_{0} -\phi_b,
\end{equation}
the homogeneous boundary conditions
\begin{equation}\label{bc}
    \Bu\cdot \Be_z|_{z=0,-H}= 0,\quad \psi|_{z=0}=0, \quad \psi|_{z=-H}=0,    
\end{equation}
and periodicity in the horizontal directions $(x,y)$.
\begin{remark}
 We note that $\phi_b'$ is a smooth function on $[-H,0]$ that vanishes on $[-H+\delta, -\delta]$. Then $D \phi_b' $ is also smooth provided that $\delta$ is sufficiently small. Consequently, it holds 
      $\partial_z(D \phi_b') = D \phi_b''$. 
\end{remark}

Using the same background function $\phi_b(z;\delta)$, the diffuse interface model \eqref{Diffuse} can be homogenized as  
\begin{equation}\label{Diffuse-s}
      \left\{\begin{aligned}
        &\Bu^\varepsilon=- K^\varepsilon\left(\nabla p^\varepsilon  +  \psi^\varepsilon \Be_{z}\right),\\
        &\nabla \cdot \Bu^\varepsilon =0,\\
        & \partial_t \psi^\varepsilon + \Bu^\varepsilon\cdot\nabla \psi^\varepsilon  +  \phi_b'\Bu^\varepsilon \cdot \Be_z   - \nabla \cdot (D^\varepsilon\nabla\psi^\varepsilon)= D^\varepsilon \varphi_b'',
      \end{aligned}\right.  
  \end{equation} 
which is supplemented with the initial condition 
\begin{equation}\label{initial-epsilon}
  \psi^\varepsilon|_{t=0}=\psi_0^\varepsilon :=\phi_{0}^\varepsilon-\phi_b,
\end{equation}
the homogeneous boundary conditions 
\begin{equation}\label{bc-epsilon}
    \Bu^\varepsilon\cdot \Be_z|_{z=0,-H}= 0,\quad 
    \psi^\varepsilon|_{z=0}=0, \quad \psi^\varepsilon|_{z=-H}=0,
\end{equation}
and periodicity in the horizontal directions $(x,y)$.

\section{Well-posedness}

\subsection{Preliminaries}
First, we recall some functional settings introduced in \cite{CNW2025}. Let $X$ be a real Banach space. We denote its norm by $\|\cdot\|_X$, its dual space by $X^*$, and the duality pairing by $\langle \cdot,\cdot\rangle_{X^*,X}$. For $1\leq r \leq \infty$ and $k\in \mathbb{N}$, let $L^r (\Omega)$ and $W^{k,r}(\Omega)$ denote the usual Lebesgue spaces and Sobolev spaces on the domain $\Omega$, both with periodicity in the horizontal directions $(x,y)$. The inner product in $L^2(\Omega)$ is denoted by $(\cdot,\cdot)$ for simplicity. When $r=2$, the notation $H^k(\Omega)=W^{k,2}(\Omega)$ will be used. We denote by $W^{s,p}(\Omega)$ for $s\geq 0$ and $1\leq r \leq \infty$ the fractional Sobolev--Slobodeckij spaces. When $p=2$, we denote  $H^s(\Omega)=W^{s,2}(\Omega)$.  
Define
\begin{align}
\mathbb{V}& := \{\psi\in C(\overline{\Omega}),~\psi|_{\Omega_j} \in C^\infty(\overline{\Omega_j})\text{ for any } j=1,2,\cdots,l, \notag\\
&\qquad \psi \ \text{is periodic in the horizontal directions and satisfies }\eqref{interface1},\,\eqref{bc}\},
\notag 
\end{align}
and $\mathcal{H}$, $\mathcal{V}$ be the closure of $\mathbb{V}$ in the $L^2$ and $H^1$ norms, respectively. Then we recognize $\mathcal{H}=L^2(\Omega)$ and $\mathcal{V}=H_{(0)}^1(\Omega)$, where $H_{(0)}^1(\Omega)$ denotes the subspace of $H^1(\Omega)$ for functions that vanish at
$z = 0, -H$ and are periodic in the horizontal directions $(x,y)$. Using the boundedness of $D$ and the Poincar\'{e} inequality (see Lemma \ref{lemmaA5}), we can define the equivalent norm in $\mathcal{V}$ as $\|\psi\|_{\mathcal{V}}=\|\sqrt{D}\nabla \psi\|_{L^2(\Omega)}$. The corresponding inner product is given by $(\cdot,\cdot)_1=(D\nabla \cdot,\nabla \cdot)$. For simplicity, we denote the dual product between $\mathcal{V}^*$ and $\mathcal{V}$ by $\langle\cdot,\cdot \rangle$. Next, we define the higher-order space 
\begin{equation}\nonumber
\mathcal{W} = \{\psi \in \mathcal{V}:~\partial_x \psi,\,\partial_y \psi \in H^1(\Omega),~D\partial_z \psi \in H^1(\Omega)\}
\end{equation}
endowed with the norm
\begin{equation}\nonumber
  \|\psi\|_{\mathcal{W}}^2 = \|\psi\|_{H^1(\Omega)}^2 
  +\|\partial_x \psi\|_{H^1(\Omega)}^2 
  +\|\partial_y \psi\|_{H^1(\Omega)}^2 
  +\|D\partial_z \psi\|_{H^1(\Omega)}^2.
\end{equation}
We note that $\mathcal{W}$ is different from $H^2(\Omega)$ in general. If $D$ contains a jump discontinuity at $z=z_j$, $j=1,\cdots,l-1$, we can verify that $\mathcal{W}\cap H^2(\Omega)=\{\psi\,:\, \psi\in H^2(\Omega),\ \partial_z\psi|_{z=z_j}=0,\ j=1,\cdots,l-1\}$, see \cite[Remark 2.3]{CNW2025}. 
Concerning the fluid velocity, we introduce the space 
\begin{align}
\widetilde{\mathbb{V}} & := \{ \Bu \in C(\overline{\Omega})^3,~\Bu|_{\Omega_j} \in C^\infty(\overline{\Omega_j})^3\text{ for any } j=1,2,\cdots,l,\nonumber\\
&\qquad \Bu \text{ is periodic in the horizontal directions and satisfies }\eqref{Sharps}_2, \eqref{interface1}, \eqref{bc}\},
\notag 
\end{align}
and set $\bm{H}$ to be the closure of $\widetilde{\mathbb{V}}$ in the $L^2$-norm. 

Next, we introduce the notion of weak solutions of the sharp interface model \eqref{Sharps}--\eqref{bc}.

\begin{definition}\label{def-Sharp-weak}
Let $T>0$. For any given initial datum $\psi_0\in  \mathcal{H}$, a triple $(\Bu, p, \psi)$ is called a weak solution to the problem \eqref{Sharps}--\eqref{bc} on $[0,T]$, if the following properties are satisfied: 
\begin{enumerate}
  \item $\psi \in L^\infty(0,T;\mathcal{H}) \cap L^2(0,T;\mathcal{V})\cap H^1(0,T;\mathcal{V}^*)$, 
  \begin{equation}\nonumber
    (\psi(t_2), \varphi) - ( \psi(t_1), \varphi)  = \int_{t_1}^{t_2}(\Bu\cdot\nabla \psi, \varphi)  +  (\phi_b'\Bu \cdot \Be_z,\varphi)  + (D\nabla\psi,\nabla\varphi)-(D \phi_b'',\varphi)\dd t,
  \end{equation}
  for any $t_1,t_2\in [0,T]$ and $\varphi\in \mathcal{V}$, moreover, $\psi(0)= \psi_0$ in $\mathcal{H}$;
\item $p \in L^2(0,T;\mathcal{V})$ is a weak solution to the elliptic problem
  \begin{equation}\label{pressure}
  \left\{\begin{aligned}
&-\nabla \cdot (K\nabla p) = \nabla \cdot (K\psi \Be_{z}), \quad \text{ in } \Omega,\\
&\partial_z p(x,y,-H)  = \partial_z p(x,y,0)  = 0,
  \end{aligned}\right. 
\end{equation}
and $p$ is periodic in the horizontal directions $(x,y)$; 
\item $\Bu \in L^2(0,T;\boldsymbol{H})$ satisfies \eqref{Sharps}$_1$ almost everywhere in $\Omega \times (0,T)$.
\end{enumerate}
\end{definition}

Applying the Faedo--Galerkin method and the energy method, one can obtain the following result on the existence, uniqueness, and regularity of solutions to the sharp interface model \eqref{Sharps}--\eqref{bc}, see \cite[Theorem 3.1, Theorem 4.2]{CNW2025}.
\begin{proposition}\label{prop1}
  For every $\psi_0\in \mathcal{H}$, problem \eqref{Sharps}--\eqref{bc} admits a global weak solution $(\Bu,p,\psi)$ on $[0,T]$ in the sense of Definition \ref{def-Sharp-weak}. In addition, if $\psi_0\in \mathcal{V}$, then the solution is unique and satisfies 
\begin{equation}\nonumber
\psi \in L^\infty(0,T;\mathcal{V}) \cap L^2(0,T;\mathcal{W}) \cap H^1(0,T;\mathcal{H})\quad \text{for any}\ T>0.
\end{equation} 
\end{proposition}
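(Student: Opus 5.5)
The plan is the standard Faedo--Galerkin scheme adapted to the discontinuous coefficients.

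\textbf{Galerkin basis and velocity.} I would take the Galerkin basis to be the eigenfunctions of the self-adjoint operator $A\psi=-\nabla\cdot(D\nabla\psi)$ on $\mathcal{H}$ with form domain $\mathcal{V}$. These form an orthonormal basis of $\mathcal{H}$, are orthogonal in $\mathcal{V}$, and by elliptic regularity for layered coefficients they lie in $\mathcal{W}$.

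For a given $\psi$, the pressure $p$ is obtained from \eqref{pressure} by Lax--Milgram on the mean-zero subspace, with $\|\nabla p\|_{L^2}\le C\|\psi\|_{L^2}$. Then $\Bu=-K(\nabla p+\psi\Be_z)\in\bm H$ and $\|\Bu\|_{L^2}\le C\|\psi\|_{L^2}$. The map $\psi\mapsto\Bu$ is linear and bounded, so the Galerkin system is a locally Lipschitz ODE system.

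\textbf{Weak solutions.} Test the $\psi$-equation with $\psi$. The key cancellation is $(\Bu\cdot\nabla\psi,\psi)=0$, which uses $\nabla\cdot\Bu=0$, $\Bu\cdot\Be_z=0$ on $z=0,-H$, periodicity, and continuity of the normal velocity across the interfaces. The background term satisfies $|(\phi_b'\Bu\cdot\Be_z,\psi)|\le \|\phi_b'\|_\infty C\|\psi\|^2$. One could use the smallness of the support of $\phi_b'$ together with a Hardy/Poincaré bound near the walls to absorb it into $\|\psi\|_{\mathcal V}^2$. For finite-time existence, however, Gronwall suffices. The source term is handled by $|(D\phi_b'',\psi)|=|(D\phi_b',\partial_z\psi)|$.

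These estimates give uniform bounds in $L^\infty(0,T;\mathcal H)\cap L^2(0,T;\mathcal V)$. The time derivative is bounded in $L^2(0,T;\mathcal V^*)$ as well. In 3D, $\|\Bu\psi\|_{L^1}$ alone is insufficient for this. Instead, write $\langle\Bu\cdot\nabla\psi,\varphi\rangle=-(\Bu\psi,\nabla\varphi)$, but $\Bu\in L^2$ only, so $\Bu\psi\in L^1$. This means the Galerkin truncation must be paired with a weaker dual space, e.g.\ $H^s$ with $s>5/2$, to get a time-derivative bound.

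Aubin--Lions then gives strong $L^2(0,T;\mathcal H)$ compactness. Since $\Bu$ depends linearly and continuously on $\psi$, $\Bu_n\to\Bu$ strongly in $L^2L^2$, and the nonlinear term passes to the limit. The regularity $H^1(0,T;\mathcal V^*)$ is recovered afterwards from the equation.

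\textbf{Strong solutions.} For $\psi_0\in\mathcal V$, test with $A\psi$. This is allowed in the Galerkin setting because the basis consists of eigenfunctions of $A$. It gives
\[
\tfrac12\tfrac{d}{dt}\|\psi\|_{\mathcal V}^2+\|A\psi\|^2\le |(\Bu\cdot\nabla\psi,A\psi)|+C(\|\psi\|+1)\|A\psi\|.
\]
The main obstacle is controlling $\Bu$ in $L^\infty$ or $L^6$ when the coefficients are discontinuous. I would use layered elliptic regularity for \eqref{pressure}: horizontal derivatives commute with the operator, giving $\|\nabla_h\Bu\|\le C\|\psi\|_{\mathcal V}$, and $\partial_z u_z=-\nabla_h\cdot\Bu_h$. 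Hence $\Bu\in H^1$ with $\|\Bu\|_{H^1}\le C\|\psi\|_{\mathcal V}$. Combined with $\|\nabla\psi\|_{L^3}\le C\|\psi\|_{\mathcal V}^{1/2}\|A\psi\|^{1/2}$, using the equivalence $\|\psi\|_{\mathcal W}\sim\|A\psi\|$, this yields
\[
|(\Bu\cdot\nabla\psi,A\psi)|\le C\|\psi\|_{\mathcal V}^{3/2}\|A\psi\|^{3/2}\le \tfrac14\|A\psi\|^2+C\|\psi\|_{\mathcal V}^6 .
\]
This only gives local-in-time bounds. To make them global, I would exploit the structure instead: the $L^\infty$ maximum principle for $\phi$ (the transport is divergence-free) gives $\psi\in L^\infty$, which bounds $\Bu$ in $L^p$ for all $p<\infty$ via $L^p$ elliptic estimates for layered coefficients. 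Then
\[
|(\Bu\cdot\nabla\psi,A\psi)|\le \|\Bu\|_{L^6}\|\nabla\psi\|_{L^3}\|A\psi\|,
\]
and Gronwall closes globally.

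\textbf{Uniqueness.} Take two solutions and let $w$ be their difference. Testing with $w$, the problem term is $(\Bu_w\cdot\nabla\psi_2,w)$. It is bounded by $\|\Bu_w\|\,\|\nabla\psi_2\|_{L^3}\|w\|_{L^6}$, which is at most $\tfrac12\|w\|_{\mathcal V}^2+C\|\nabla\psi_2\|_{L^3}^2\|w\|^2$. Since $\|\nabla\psi_2\|_{L^3}^2$ is integrable in time by the strong estimate, Gronwall gives $w\equiv0$.
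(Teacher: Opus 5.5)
Your architecture (Galerkin in the eigenbasis of $\mathcal{L}$, $L^2$ energy bound, Aubin--Lions, the $\mathcal{L}\psi$ test for $\psi_0\in\mathcal{V}$, Gronwall for uniqueness) is the one the paper invokes. The paper gives no proof of its own and refers to \cite{CNW2025}. However, two steps fail as written.

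\textbf{Global strong bound.} The maximum principle controls $\phi$ in $L^\infty$ only when $\phi_0\in L^\infty$. But $\psi_0\in\mathcal{V}=H^1_{(0)}(\Omega)$ does not embed into $L^\infty$ in three dimensions, so this step gives nothing. Your Galerkin inequality with the $C\|\psi\|_{\mathcal{V}}^6$ term then yields only a local solution. The missing ingredient is an $L^6$ bound. Since $\mathcal{V}\subset L^6$, testing with $|\psi|^4\psi$ as in Lemma \ref{L^r-estimate} gives $\psi\in L^\infty(0,T;L^6)$; on $[0,T]$ a crude Gronwall replaces the smallness of $\delta$. Hence $\|\Bu\|_{L^6}\le C\|\psi\|_{L^6}$ by Lemma \ref{estimate-p}, and
\[
|(\Bu\cdot\nabla\psi,\mathcal{L}\psi)|\le C\|\psi\|_{L^6}\|\nabla\psi\|_{L^2}^{1/2}\|\mathcal{L}\psi\|_{L^2}^{3/2}\le \tfrac14\|\mathcal{L}\psi\|_{L^2}^2+C\|\psi\|_{L^6}^4\|\nabla\psi\|_{L^2}^2 ,
\]
which closes linearly; this is \eqref{2-18} with $r=6$. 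This $L^p$ estimate cannot be carried out on the Galerkin system, because the spectral projection destroys the $|\psi|^{p-2}\psi$ structure. It must therefore be applied to the local strong solution and combined with a continuation argument.

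Incidentally, $\Bu\notin H^1(\Omega)$. The component $u_x=-K\partial_x p$ jumps at $z=z_j$, since $K$ jumps while $\partial_x p$ is continuous. You only have $\Bu|_{\Omega_j}\in H^1(\Omega_j)$, which is still enough for the $L^6$ bound.

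\textbf{Time derivative.} The phrase ``recovered afterwards from the equation'' does not give $H^1(0,T;\mathcal{V}^*)$. Your premise that $\Bu\psi$ is only in $L^1$ is too pessimistic. Lemma \ref{estimate-p} gives $\Bu_n$ bounded in $L^\infty(0,T;L^2)\cap L^2(0,T;L^6)$, so
\[
\|\Bu_n\psi_n\|_{L^2}\le C\|\psi_n\|_{L^2}^{1/2}\|\nabla\psi_n\|_{L^2}^{3/2},
\]
and $\partial_t\psi_n$ is bounded in $L^{4/3}(0,T;\mathcal{V}^*)$ because the projection is a contraction on $\mathcal{V}$. Your suggested pairing with classical $H^s$, $s>5/2$, is itself problematic: the projection does not preserve $H^s$ for $s\ge 3/2$, since the eigenfunctions have kinks at $z=z_j$.

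The $L^{4/3}(0,T;\mathcal{V}^*)$ bound suffices for Aubin--Lions, but the equation then only gives $\partial_t\psi\in L^{4/3}(0,T;\mathcal{V}^*)$. An $L^2$-in-time bound would need $\Bu\psi\in L^2(0,T;L^2)$, essentially $\psi\in L^4(0,T;L^4)$. The energy class does not provide this in 3D; it is the same obstruction as for Leray--Hopf solutions. So for data merely in $\mathcal{H}$, your argument does not reach the $H^1(0,T;\mathcal{V}^*)$ requirement of Definition \ref{def-Sharp-weak}. You need an additional ingredient, for example $\phi_0\in L^\infty$ and the maximum principle, which give $\Bu\psi\in L^\infty(0,T;L^2)$. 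Otherwise you should record $\partial_t\psi\in L^{4/3}(0,T;\mathcal{V}^*)$, which is all the limit passage needs.
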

The regularity of $\bm{u}$ can easily be deduced from the regularity of $p$ and $\psi$. For any $\psi\in \mathcal{H}$, the existence of a weak solution $p$ to the elliptic problem \eqref{pressure} is guaranteed by the Lax--Milgram theorem. Uniqueness can be achieved if we restrict the solution $p$ to the subspace of $\mathcal{V}$ with zero mean. For a piecewise smooth function $\psi$ (smooth in each layer $\Omega_j$), one can show that $p$ is also piecewise smooth. For example, if $\psi\in \mathcal{V}$, then $p$ is piecewise in $H^2(\Omega)$. Moreover, we have the following estimate for $(\bm{u},p)$ (see \cite[Lemma 3.1]{CNW2025}):
\begin{lemma}\label{estimate-p}
If $\psi \in L^r(\Omega)$ for $r\in (1,\infty)$, the solution $p$ of the elliptic problem \eqref{pressure} satisfies $p\in W^{1,r}(\Omega)$ and
\[
\|p\|_{W^{1,r}(\Omega)} \leq C_p \|\psi\|_{L^r(\Omega)},
\]
where the positive constant $C_p=C_p(K_j,r)$. In view of \eqref{Sharps}$_1$, we also have 
\[\|\bm{u}\|_{L^r(\Omega)}\leq C \|\psi\|_{L^r(\Omega)}.\]
\end{lemma}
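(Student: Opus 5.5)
The plan is to derive the $W^{1,r}$ bound from a $W^{1,r}$ theory for the divergence-form operator $-\nabla\cdot(K\nabla\,\cdot)$, where the coefficient $K=K(z)$ depends only on $z$ and is piecewise constant. We first fix a normalization. The problem \eqref{pressure} is a Neumann-type problem, horizontally periodic, whose right-hand side has divergence form $\nabla\cdot\bm{F}$ with $\bm{F}=K\psi\Be_z$. Since $\bm{F}\cdot\Be_z$ need not vanish on $z=0,-H$, we read the weak formulation as
\[
(K\nabla p,\nabla\varphi)=-(K\psi\Be_z,\nabla\varphi)\quad\text{for all }\varphi\in H^1_{per}(\Omega).
\]
This is equivalent to imposing $\Bu\cdot\Be_z=0$ on the boundary, and constants lie in the kernel. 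We therefore work in the zero-mean subspace. There the Lax--Milgram theorem gives existence and uniqueness, together with the $r=2$ estimate $\|\nabla p\|_{L^2}\le (\max K_j/\min K_j)\|\psi\|_{L^2}$. Poincar\'e--Wirtinger then controls $\|p\|_{L^2}$.

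For general $r\in(1,\infty)$, I would appeal to the $W^{1,r}$ theory for divergence-form equations whose coefficients are measurable in one variable and smooth (here constant) in the others. Dong and Kim's results for coefficients depending only on $z$ are the natural reference; they apply because $K$ depends only on $z$, and they yield $\|\nabla p\|_{L^r}\le C\|\bm{F}\|_{L^r}$. To handle the Neumann boundary, I would reflect evenly across $z=0$ and $z=-H$, or equivalently extend periodically in $z$ with period $2H$. The even extension of $p$ and the odd extension of $F_z=K\psi$ turn the problem into a fully periodic problem on a torus, with coefficient $K$ extended evenly, so it still depends only on $z$. The Neumann condition is exactly what makes this reflection consistent.

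There is also a more elementary route that avoids heavy machinery. Since $K=K(z)$, take Fourier series in $(x,y)$. The mode $k=0$ reduces to the ODE $-(K\hat p_0')'=(K\hat\psi_0)'$, so $\hat p_0'=-\hat\psi_0$ and $\partial_z\hat p_0=-\hat\psi_0$ holds exactly. The nonzero modes could then be handled by a transmission-problem analysis. However, obtaining $L^r$ bounds for $r\ne 2$ through multipliers is delicate, so I would rely on the cited regularity theory, using duality for $r<2$: the adjoint problem has the same structure.

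Once $\|\nabla p\|_{L^r}\le C\|\psi\|_{L^r}$ is established, Poincar\'e--Wirtinger in $L^r$ gives the full $W^{1,r}$ bound. The velocity estimate then follows directly from \eqref{Sharps}$_1$:
\[
\|\Bu\|_{L^r}\le \max_j K_j\,\bigl(\|\nabla p\|_{L^r}+\|\psi\|_{L^r}\bigr)\le C\|\psi\|_{L^r}.
\]
The main obstacle is the $L^r$ gradient estimate for $r\neq 2$ across the discontinuous interfaces. Here I would verify that the reflected problem meets the hypotheses of the $z$-measurable-coefficient $W^{1,r}$ estimate, and argue by density from $\psi\in L^2\cap L^r$ to general $\psi\in L^r$.
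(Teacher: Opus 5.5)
Your proposal is correct and follows essentially the paper's route. The case $r=2$ is the standard Lax--Milgram energy estimate, and for $r\neq 2$ the paper simply invokes a $W^{1,r}$ estimate for divergence-form equations with coefficients measurable in one direction (Dong--Li, Theorem 2.6), reading off the velocity bound from Darcy's law exactly as you do; the only minor difference is that you first pass to a periodic problem by even reflection of $p$ and $K$ and odd reflection of $K\psi$ (consistent with your conormal weak formulation), whereas the cited result treats the conormal boundary condition directly.
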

\begin{remark}
When $r=2$, the proof of Lemma \ref{estimate-p} is standard. For the case $r\neq 2$, we refer to \cite[Theorem 2.6]{DL2021}.
\end{remark}

\subsection{Well-posedness in $\mathcal{H}^s$}
Due to the lack of uniqueness in the three-dimensional case, one cannot define a (strongly) continuous semigroup corresponding to the weak solution $\psi(t)$ in the phase space $\mathcal{H}$. Therefore, we shall work within a more regular Hilbert space framework. 

Define $$\mathcal{L}\psi = -\nabla\cdot (D\nabla \psi),\quad \forall\, \psi \in \mathcal{W}.$$ 
It follows from \cite[Lemma 2.1]{CNW2025} that the operator $\mathcal{L}$ is a self-adjoint positive operator, and possesses a sequence of eigenvalues $\{\lambda_k\}_{k=1}^\infty$ where $\lambda_k\to \infty$ as $k\to \infty$. The set of eigenfunctions $\{w_k\}_{k=1}^\infty\subset \mathcal{W}$ forms an orthonormal basis in $\mathcal{H}$, and an orthogonal basis in $\mathcal{V}$. Furthermore, the norm $\|\cdot\|_{\mathcal{W}}$ is equivalent to $\|\mathcal{L}\cdot\|_{L^2(\Omega)}$, see Lemma \ref{W-embedding}. 

Based on the expansion of eigenfunctions, the fractional operator $\L^s$ ($s\geq 0$)  
and the fractional norm $\|\cdot\|_{\mathcal{H}^s}$ are defined as
\begin{equation}\nonumber
   \mathcal{L}^s\psi := \sum_{k=1}^\infty \lambda_k^s a_k w_k\quad \text{ and }\quad  
\|\psi\|_{\mathcal{H}^s}^2 :=\sum_{k=1}^\infty \lambda_k^s|a_k|^2=(\mathcal{L}^\frac{s}2\psi,\mathcal{L}^\frac{s}2\psi), 
\end{equation}
respectively, for any $\psi = \sum\limits_{k=1}^\infty a_k w_k$ with $a_k=(\psi, w_k)$. Then we can define the Hilbert space $\mathcal{H}^s=\mathrm{dom}(\mathcal{L}^\frac{s}{2})$ as the closure of $\mathbb{V}$ in the $\mathcal{H}^s$-norm. 
It follows that 
$$
\mathcal{H}^0=\mathcal{H},\quad \mathcal{H}^1=\mathcal{V},\quad \mathcal{H}^2=\mathcal{W}.
$$ 
As usual, we define extensions of negative exponents as dual spaces, that is, $\mathcal{H}^{-s}=(\mathcal{H}^s)^*$ for $s>0$, with the norm $\|\psi\|_{\mathcal{H}^{-s}}^2 :=\sum_{k=1}^\infty \lambda_k^{-s}|a_k|^2$. Then we have dense inclusions $\mathcal{H}^{s_1}\subset \mathcal{H}^{s_2}$ for $s_1,s_2\in\mathbb{R}$, $s_1\geq s_2$, with continuous embedding when $s_1\geq s_2$ and compact
embedding when $s_1>s_2$. Further properties of the fractional operator $\mathcal{L}^s$ and the fractional space $\mathcal{H}^s$ are summarized in Appendix \ref{sec-appB}.  

The main result of this section is the following 
\begin{theorem}\label{H^s-estimate}
Suppose that $\psi_0 \in \mathcal{H}^s$ for some $s\in (\frac12,1)$. Then problem \eqref{Sharps}--\eqref{bc} admits a global weak solution $(\Bu,p,\psi)$ on $[0,T]$ in the sense of Definition \ref{def-Sharp-weak}, which is unique and satisfies the additional regularity properties 
\[\psi\in L^\infty(0,T;\mathcal{H}^s)\cap L^2(0,T;\mathcal{H}^{1+s})\cap H^1(0,T;\mathcal{H}^{s-1}),\quad \text{ for any }T>0.\]
Let $(\Bu_1, p_1, \psi_1)$ and $(\Bu_2, p_2, \psi_2)$ be two solutions with initial data $\psi_{1,0}$ and $\psi_{2,0}$, respectively. Then the following continuous dependence estimate holds for any $t>0$:
\begin{align}
& \|\L^{\frac{s}{2}}(\psi_1(t)-\psi_2(t))\|_{L^2(\Omega)}^2 \notag \\
&\quad \leq \|\L^{\frac{s}{2}}(\psi_{1,0}-\psi_{2,0})\|_{L^2(\Omega)}^2 \exp\left(\int_0^t C\Big(\|\psi_1(\tau)\|_{\mathcal{H}^s}^\frac{4}{2s-1}+ \| \psi_2(\tau)\|_{\mathcal{H}^s}^\frac{4}{2s-1}+1\Big)\dd \tau\right).\label{S-conti-Hs}
\end{align}
\end{theorem}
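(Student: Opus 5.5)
We will prove Theorem \ref{H^s-estimate} with a Faedo--Galerkin scheme built on the eigenfunctions $\{w_k\}$ of $\L$. The projection $P_N$ onto $\mathrm{span}\{w_1,\dots,w_N\}$ commutes with $\L^{s/2}$, so the estimates carry over to the approximations. Let $\psi_N$ be the Galerkin solution with datum $P_N\psi_0$, and let $\Bu_N$ be given by the pressure problem \eqref{pressure}. We test the $\psi_N$-equation with $\L^s\psi_N$, which gives
\[
\frac12\frac{\dd}{\dd t}\|\L^{\frac s2}\psi_N\|^2+\|\L^{\frac{1+s}2}\psi_N\|^2
= -(\Bu_N\cdot\nabla\psi_N,\L^s\psi_N)-(\phi_b'\Bu_N\cdot\Be_z,\L^s\psi_N)+(D\phi_b'',\L^s\psi_N).
\]

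The linear terms are handled by duality. We bound $|(f,\L^s\psi_N)|\le \|\L^{\frac{s-1}{2}}f\|\,\|\L^{\frac{1+s}2}\psi_N\|$ and use $\|\L^{\frac{s-1}2}f\|\le C\|f\|_{L^2}$ (since $s<1$). For $f=\phi_b'\Bu_N\cdot\Be_z$ we also have $\|f\|\le C_\delta\|\psi_N\|$ by Lemma \ref{estimate-p}. These terms are therefore absorbed by Young's inequality.

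The nonlinear term is the main obstacle. We write it as
\[
|(\Bu_N\cdot\nabla\psi_N,\L^s\psi_N)|\le \|\Bu_N\cdot\nabla\psi_N\|_{\mathcal H^{s-1}}\|\L^{\frac{1+s}2}\psi_N\|.
\]
Since $\mathcal H^{1-s}$ coincides with a closed subspace of $H^{1-s}$ for $1-s<\frac12$ (interface conditions are invisible below order $\frac12$), duality and Sobolev embedding give $\|g\|_{\mathcal H^{s-1}}\le C\|g\|_{L^{q}}$ with $\frac1q=\frac12+\frac{1-s}{3}$. Hölder's inequality then yields
\[
\|\Bu_N\cdot\nabla\psi_N\|_{L^q}\le \|\Bu_N\|_{L^{r}}\|\nabla\psi_N\|_{L^{2}}, \qquad \frac1r=\frac{1-s}3 .
\]
Lemma \ref{estimate-p} gives $\|\Bu_N\|_{L^r}\le C\|\psi_N\|_{L^r}$, and $\|\psi_N\|_{L^r}\le C\|\psi_N\|_{\mathcal H^{s+\frac12}}$ because $s+\frac12\ge \frac32-s$ when $s>\frac12$. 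Here we need $\mathcal H^\sigma\hookrightarrow H^\sigma$ for $\sigma<\frac32$. This is the key interpolation and embedding fact relating $\mathcal H^\sigma$ to Sobolev spaces for the discontinuous-coefficient operator; we take it from the appendix and expect it to be the real technical heart.

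Interpolating both factors between $\mathcal H^s$ and $\mathcal H^{1+s}$ gives
\[
\|\Bu_N\cdot\nabla\psi_N\|_{\mathcal H^{s-1}}\le C\|\psi_N\|_{\mathcal H^s}^{\,s-\frac12}\,\|\psi_N\|_{\mathcal H^{1+s}}^{\,\frac32-s}
\]
(the exponents sum to $1$). Hence the nonlinear term is bounded by $C\|\psi_N\|_{\mathcal H^s}^{s-\frac12}\|\psi_N\|_{\mathcal H^{1+s}}^{\frac52-s}$. Since $\frac52-s<2$, Young's inequality turns this into
\[
\tfrac14\|\psi_N\|_{\mathcal H^{1+s}}^2+C\|\psi_N\|_{\mathcal H^s}^{\frac{4}{2s-1}}\cdot\|\psi_N\|_{\mathcal H^s}^2 .
\]
This growth is superlinear, so it does not close by itself. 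To obtain global bounds, we first use the $\mathcal H$ and $\mathcal V$ energy theory of Proposition \ref{prop1}: the $L^2$ estimate gives $\psi\in L^2(0,T;\mathcal V)$. We then redo the estimate keeping one factor in $\mathcal V=\mathcal H^1$, i.e. $\|\Bu_N\|_{L^r}\|\nabla\psi_N\|\le C\|\psi_N\|_{\mathcal H^{s+1/2}}\|\psi_N\|_{\mathcal H^1}$. Interpolating $\mathcal H^{s+\frac12}$ between $\mathcal H^s$ and $\mathcal H^{1+s}$ then yields a bound
\[
C\|\psi_N\|_{\mathcal H^1}^{a}\|\psi_N\|_{\mathcal H^s}^2+\tfrac14\|\psi_N\|_{\mathcal H^{1+s}}^2
\]
with $a$ controlled by the $L^\infty_tL^2_x\cap L^2_t\mathcal V$ bounds via further interpolation. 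Gronwall's inequality then gives uniform bounds in $L^\infty(\mathcal H^s)\cap L^2(\mathcal H^{1+s})$. The bound on $\partial_t\psi_N$ in $L^2(\mathcal H^{s-1})$ follows from the equation, and Aubin--Lions lets us pass to the limit.

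For uniqueness and \eqref{S-conti-Hs}, let $w=\psi_1-\psi_2$ and $\Bu_w=\Bu_1-\Bu_2$, and test the difference equation with $\L^sw$. The nonlinear difference splits as $\Bu_w\cdot\nabla\psi_1+\Bu_2\cdot\nabla w$. Each piece is estimated exactly as above, with $\|\Bu_w\|_{L^r}\le C\|w\|_{\mathcal H^{s+\frac12}}$. We interpolate, then apply Young's inequality to absorb $\|w\|_{\mathcal H^{1+s}}^2$, which leaves the factor $C(\|\psi_1\|_{\mathcal H^s}^{\frac4{2s-1}}+\|\psi_2\|_{\mathcal H^s}^{\frac4{2s-1}}+1)\|w\|_{\mathcal H^s}^2$. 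Gronwall's inequality then yields \eqref{S-conti-Hs}.
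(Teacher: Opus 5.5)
Your proposal has a genuine gap at the step that is supposed to give global bounds. Carrying out your ``one factor in $\mathcal V$'' estimate gives
\[
|(\Bu_N\cdot\nabla\psi_N,\L^s\psi_N)|\le C\|\psi_N\|_{\mathcal H^s}^{1/2}\|\psi_N\|_{\mathcal H^{1+s}}^{3/2}\|\psi_N\|_{\mathcal H^1}\le \tfrac14\|\psi_N\|_{\mathcal H^{1+s}}^2+C\|\psi_N\|_{\mathcal H^1}^{4}\|\psi_N\|_{\mathcal H^s}^2 ,
\]
so $a=4$, and Gronwall then needs $\psi\in L^4(0,T;\mathcal V)$. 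The energy estimate only gives $L^\infty(0,T;\mathcal H)\cap L^2(0,T;\mathcal V)$. Interpolating these yields $L^{2/\theta}(0,T;\mathcal H^\theta)$, never $L^4(0,T;\mathcal V)$. No other choice of exponents helps. Since $\Bu$ is an order-zero transform of $\psi$, the nonlinearity scales like that of 3D Navier--Stokes. A scaling count then shows that any Gronwall coefficient of the form $\|\psi\|_{\mathcal H^\sigma}^{2/\alpha}$ must have $\sigma=\frac12+\alpha$. The energy bounds only place $\psi$ in $L^{2/\sigma}(0,T;\mathcal H^\sigma)$, so that coefficient is never integrable in time. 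There are also two smaller slips. First, your displayed bound for $\|\Bu_N\cdot\nabla\psi_N\|_{\mathcal H^{s-1}}$ has degree one, although the term is quadratic. Your interpolation actually gives $\|\psi_N\|_{\mathcal H^s}^{s+1/2}\|\psi_N\|_{\mathcal H^{1+s}}^{3/2-s}$, which is what your Young step uses. Second, the Sobolev threshold for $L^{3/(1-s)}$ is exactly $s+\frac12$; this endpoint embedding does hold.

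What is missing is the subcritical a priori bound the paper proves first, Lemma \ref{L^r-estimate} with $r=\frac{6}{3-2s}$. Testing with $r|\psi|^{r-2}\psi$, the transport term vanishes by incompressibility. The terms generated by $\phi_b$, supported in the $\delta$-layers, are controlled by the dissipation; this is where $\delta\le\delta_1(r)$ in \eqref{delta_1} enters. The result is the uniform bound \eqref{uni-t-Lr}. Because $\mathcal H^s\hookrightarrow L^r$, this bound applies directly to $\mathcal H^s$ data, and $r>3$ exactly when $s>\frac12$.

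The paper then splits the trilinear term as $\|\Bu\|_{L^r}\|\nabla\psi\|_{L^3}\|\L^s\psi\|_{L^{6/(1+2s)}}$. It interpolates only $\|\nabla\psi\|_{L^3}\le C\|\psi\|_{\mathcal H^s}^{s-1/2}\|\psi\|_{\mathcal H^{1+s}}^{3/2-s}$ and keeps $\|\psi\|_{L^r}$ as a coefficient. Young's inequality then gives \eqref{H^s-5}, which is linear in $\|\psi\|_{\mathcal H^s}^2$ with coefficient $C(\|\psi\|_{L^r}^{4/(2s-1)}+1)$, bounded by \eqref{uni-t-Lr}. Your own split $\|\Bu\|_{L^{3/(1-s)}}\|\nabla\psi\|_{L^2}$ would also close, with coefficient $\|\psi\|_{L^{3/(1-s)}}^{2/s}$. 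However, $\mathcal H^s\not\hookrightarrow L^{3/(1-s)}$, so the $L^r$ lemma could not be applied to the data; this is why $r=\frac{6}{3-2s}$ is chosen.

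Two further remarks. First, $|\psi_N|^{r-2}\psi_N$ is not an admissible test function for spectral Galerkin approximants. In your scheme the $L^r$ bound should therefore be applied to the local strong solution, followed by a continuation argument. Second, in the uniqueness step a H\"older split of $\Bu_w\cdot\nabla\psi_1$ leaves a weight such as $\|\psi_1\|_{\mathcal H^1}^4\le\|\psi_1\|_{\mathcal H^s}^{4s}\|\psi_1\|_{\mathcal H^{1+s}}^{4-4s}$. This lies in $L^1(0,T)$ since $4-4s<2$, which suffices for uniqueness and continuous dependence. It is not, however, the pure $\mathcal H^s$ weight displayed in \eqref{S-conti-Hs}; the paper's ``similar'' treatment of $\overline{\Bu}\cdot\nabla\psi_2$ passes over the same point.
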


From Theorem \ref{H^s-estimate} and the interpolation theory, it follows that $\psi\in C([0,T];\mathcal{H}^s)$ for any $T>0$. Furthermore, we can conclude 
\begin{corollary} 
Let $s\in (\frac12,1)$. 
Problem \eqref{Sharps}--\eqref{bc} generates a strongly continuous semigroup $\{S_0(t)\}_{t\geq 0}$ on the phase space $\mathcal{H}^s$, that is, 
$$
S_0(t): \mathcal{H}^s\to \mathcal{H}^s,\quad S_0(t)\psi_0=\psi(t),\quad \forall\, t\geq 0, 
$$
where $\psi$ is the unique weak solution to problem \eqref{Sharps}--\eqref{bc} corresponding to the initial datum $\psi_0 \in \mathcal{H}^s$.
\end{corollary}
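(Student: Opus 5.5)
The corollary follows from Theorem \ref{H^s-estimate}: the theorem gives existence and uniqueness in $\mathcal{H}^s$, and the continuous dependence estimate \eqref{S-conti-Hs} gives continuity in the initial data. The plan has three steps.

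\textbf{Step 1: the maps $S_0(t)$ are well defined.} For $\psi_0\in\mathcal{H}^s$, Theorem \ref{H^s-estimate} provides a unique weak solution on every $[0,T]$. By uniqueness, solutions on different intervals agree on their overlap. This yields a single global solution $\psi$ on $[0,\infty)$. Since $\psi\in L^2(0,T;\mathcal{H}^{1+s})\cap H^1(0,T;\mathcal{H}^{s-1})$, the standard Lions--Magenes interpolation lemma, applied in the Hilbert scale $\{\mathcal{H}^r\}$ generated by $\mathcal{L}$ (the midpoint of $\mathcal{H}^{1+s}$ and $\mathcal{H}^{s-1}$ is $\mathcal{H}^s$), gives $\psi\in C([0,T];\mathcal{H}^s)$. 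Hence $S_0(t)\psi_0:=\psi(t)\in\mathcal{H}^s$ is well defined for every $t\ge0$, and $S_0(0)=I$.

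\textbf{Step 2: semigroup property.} For fixed $\tau\ge0$, the shifted function $t\mapsto\psi(t+\tau)$ satisfies the weak formulation of Definition \ref{def-Sharp-weak}. The system is autonomous, and the identity in item 1 holds for arbitrary $t_1,t_2$. Its initial datum is $\psi(\tau)\in\mathcal{H}^s$. The shifted function is therefore a solution with this datum, and by uniqueness in Theorem \ref{H^s-estimate} it coincides with $S_0(\cdot)\psi(\tau)$. This gives $S_0(t+\tau)=S_0(t)S_0(\tau)$.

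\textbf{Step 3: strong continuity, i.e. joint continuity of $(t,\psi_0)\mapsto S_0(t)\psi_0$.} Continuity in $t$ for fixed $\psi_0$ is Step 1. For continuity in $\psi_0$, take $\psi_{1,0},\psi_{2,0}$ in a bounded set $B\subset\mathcal{H}^s$ and apply \eqref{S-conti-Hs}. The only remaining issue is to bound the exponent uniformly, i.e. to show
\[
\int_0^T\Bigl(\|\psi_i\|_{\mathcal{H}^s}^{\frac{4}{2s-1}}+1\Bigr)\dd\tau\le C(B,T).
\]
It suffices that $\sup_{[0,T]}\|\psi_i\|_{\mathcal{H}^s}$ is bounded by a constant depending only on $\|\psi_{i,0}\|_{\mathcal{H}^s}$ and $T$. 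Such a bound is implicit in the a priori estimate behind the existence part of Theorem \ref{H^s-estimate}, so I would cite that estimate. Given this bound, \eqref{S-conti-Hs} makes $S_0(t)$ Lipschitz on bounded sets, uniformly for $t\in[0,T]$.

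Joint continuity then follows from the triangle inequality:
\[
\|S_0(t)\psi_0-S_0(t_0)\psi_{0}^*\|_{\mathcal{H}^s}\le\|S_0(t)\psi_0-S_0(t)\psi_0^*\|_{\mathcal{H}^s}+\|S_0(t)\psi_0^*-S_0(t_0)\psi_0^*\|_{\mathcal{H}^s}.
\]
The first term is controlled by the uniform Lipschitz bound and the second by time continuity from Step 1. The main, though mild, obstacle is making the uniform-in-data bound on $\|\psi\|_{L^\infty(0,T;\mathcal{H}^s)}$ explicit. All other steps are routine.
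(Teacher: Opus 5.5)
Your proposal is correct and follows the paper's route: Theorem \ref{H^s-estimate} plus the Lions--Magenes interpolation gives $\psi\in C([0,T];\mathcal{H}^s)$, and the continuous dependence estimate \eqref{S-conti-Hs} gives continuity in the initial data. The uniform-in-data bound you flag as the remaining obstacle is supplied by Lemma \ref{Hs-es}: combine \eqref{H^s-5-1} with \eqref{uni-t-Lr} and the embedding $\mathcal{H}^s\hookrightarrow L^{6/(3-2s)}(\Omega)$, and the resulting bound depends only on $\|\psi_0\|_{\mathcal{H}^s}$ and $T$.
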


To Theorem \ref{H^s-estimate}, we derive some necessary \textit{a priori} estimates. 
\begin{lemma}[$L^r$-estimate for $\psi$] 
\label{L^r-estimate}
Let $(\Bu,p,\psi)$ be a global weak solution to  problem \eqref{Sharps}--\eqref{bc}. For any given  $r>2$, we assume the constant $\delta$ in \eqref{phi_b1} satisfies 
\begin{equation}\label{delta_1}
0<\delta \leq  \delta_1(r) :=  \left[\frac{\frac{r-1}{r^2} \min_j D_j}{c_\Delta H^\frac{2}{r} \sup_{j}K_j (1+C_p)}\right]^\frac{r}{r-2}.
\end{equation}
If in addition, $\psi_0 \in L^r(\Omega)$, then $\psi\in L^\infty(0,\infty;L^r(\Omega))$ satisfies
\begin{equation}\label{1-1}
\|\psi(t)\|_{L^r(\Omega)}^r \leq \|\psi_0\|_{L^r(\Omega)}^r e^{-\frac{2(r-1)\min_{j}D_j}{rH^2}t} +\frac{rM_0 H^2}{2(r-1)\min_{j}D_j}\left(1- e^{-\frac{2(r-1)\min_{j}D_j}{r H^2}t}\right), 
\end{equation}
for any $t\ge 0$, where the positive constant $M_0$ is given by
\begin{equation}\label{M0}
M _0 := \frac{(8r c_\Delta  L^\frac2r \delta^{-\frac1r}\max_{j}D_j)^r}{r(\min_j D_j)^{r-1}}.
\end{equation}
In particular, we have
\begin{equation}\label{1-2}
\|\psi(t)\|_{L^r(\Omega)}^r\leq \frac{rM_0 H^2}{2(r-1)\min_{j}D_j}+1,
\end{equation}
for any
\begin{equation}
t\ge  T_0 (\|\psi_0\|_{L^r (\Omega)}):= \frac{r  H^2}{2(r-1)\min_j D_j} \ln \big(\|\psi_0\|_{L^r(\Omega)}^r+1\big).
\end{equation}
\end{lemma}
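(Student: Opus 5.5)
The plan is to test the $\psi$-equation \eqref{Sharps}$_3$ with $|\psi|^{r-2}\psi$ and derive a linear differential inequality for $y(t):=\|\psi(t)\|_{L^r(\Omega)}^r$. We argue formally first (or on Galerkin approximations, passing to the limit by lower semicontinuity).

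\textbf{The energy identity.} Since $\nabla\cdot\Bu=0$ and $\Bu\cdot\Be_z=0$ at $z=0,-H$, with horizontal periodicity, the convective term vanishes:
\[
(\Bu\cdot\nabla\psi,|\psi|^{r-2}\psi)=\tfrac1r\int_\Omega \Bu\cdot\nabla|\psi|^r=0 .
\]
The diffusion term yields
\[
(r-1)\int_\Omega D|\psi|^{r-2}|\nabla\psi|^2=\frac{4(r-1)}{r^2}\int_\Omega D|\nabla|\psi|^{r/2}|^2 .
\]
This gives
\[
\frac1r\frac{d}{dt}y+\frac{4(r-1)}{r^2}\min_j D_j\|\nabla|\psi|^{r/2}\|^2\le \Big|\int_\Omega \phi_b'u_z|\psi|^{r-2}\psi\Big|+\Big|\int_\Omega D\phi_b''|\psi|^{r-2}\psi\Big| .
\]

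\textbf{The two source terms.} The key observation is that $\phi_b'$ and $\phi_b''$ are supported in the boundary strips $\{-\delta<z<0\}\cup\{-H<z<-H+\delta\}$, and $|\psi|^{r/2}$ vanishes at $z=0,-H$. A one-dimensional Hardy/Poincaré estimate in $z$ on such a strip therefore gives $\int_{\text{strip}}|\psi|^r\le C\delta^2\|\partial_z|\psi|^{r/2}\|^2$. More generally, $\sup_{z\in\text{strip}}$ of $|\psi|^{r/2}$ along vertical lines is bounded by $\sqrt{\text{dist}}\,\|\partial_z|\psi|^{r/2}\|_{L^2_z}$.

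For the velocity term, I will use $|\phi_b'|\le c_\Delta/\delta$, Hölder's inequality, and Lemma \ref{estimate-p} with exponent $r$, namely $\|\Bu\|_{L^r}\le \sup_jK_j(1+C_p)\|\psi\|_{L^r}$. Combined with the strip estimate for $|\psi|^{r-1}$ on the strip, this should yield a bound of the form
\[
c_\Delta\sup_jK_j(1+C_p)H^{2/r}\delta^{1-2/r}\|\nabla|\psi|^{r/2}\|^2 .
\]
The precise exponent $\frac{r}{r-2}$ in \eqref{delta_1} suggests exactly this $\delta^{(r-2)/r}$ smallness factor. Hence condition \eqref{delta_1} makes this term at most $\frac{r-1}{r^2}\min_jD_j\|\nabla|\psi|^{r/2}\|^2$, which is absorbed.

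For the $\phi_b''$ term, I will use $|\phi_b''|\le 2c_\Delta\delta^{-2}$, though it is better to integrate by parts once, writing $\int D\phi_b''|\psi|^{r-2}\psi=-(r-1)\int D\phi_b'|\psi|^{r-2}\partial_z\psi$, as in the Remark. Then Young's inequality gives at most $\frac{r-1}{r^2}\min_jD_j\|\nabla|\psi|^{r/2}\|^2+\frac{r}{?}M_0$-type constants. The $L^r$ norm of $\phi_b'$ over the strip, of size $c_\Delta\delta^{-1}(L^2\delta)^{1/r}$, produces the $\delta^{-1/r}$-scaling and the $L^{2/r}$ factor appearing in $M_0$.

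\textbf{Closing the inequality.} After absorption, at least $\frac{2(r-1)}{r^2}\min_jD_j\|\nabla|\psi|^{r/2}\|^2$ remains on the left. Poincaré's inequality in $z$, $\||\psi|^{r/2}\|^2\le \frac{H^2}{?}\|\partial_z|\psi|^{r/2}\|^2$ (I will use the crude constant $H^2$), then yields
\[
y'+\frac{2(r-1)\min_jD_j}{rH^2}y\le M_0 .
\]
Gronwall's inequality gives \eqref{1-1}. For \eqref{1-2}, note that for $t\ge T_0$ the exponential factor is at most $(\|\psi_0\|^r+1)^{-1}$, so $y\le 1+\frac{rM_0H^2}{2(r-1)\min D_j}$.

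\textbf{Main obstacle.} I expect the main obstacle to be the velocity term. There, $u_z$ is nonlocal and is controlled only in $L^r$ by $\|\psi\|_{L^r}$, so the smallness must come entirely from the thin support of $\phi_b'$ through a boundary-layer Hardy-type bound. Getting the exact constants in \eqref{delta_1} requires a careful Hölder/trace argument in $z$ on the strip, which is the step I would work out first.
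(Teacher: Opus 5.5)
Your proposal is correct and follows the paper's proof essentially step for step. Both arguments test with (a multiple of) $|\psi|^{r-2}\psi$ and bound the $\phi_b' u_z$ term by H\"older's inequality, Lemma \ref{estimate-p} in $L^r$, and the Poincar\'e inequality of Lemma \ref{lemmaA5} on the strips $\Omega_\delta$ and on $\Omega$, which produces the factor $H^{2/r}\delta^{(r-2)/r}$ absorbed via \eqref{delta_1}; both then treat the $D\phi_b''$ term by H\"older, the strip Poincar\'e inequality and Young (which yields exactly $M_0$), and close with Poincar\'e and Gronwall. The step you flag as the main obstacle needs no trace argument, since the strip Poincar\'e inequality with constant $\delta$ suffices, and the paper uses the direct bound $|\phi_b''|\le 2c_\Delta\delta^{-2}$ rather than your optional integration by parts, though either route gives the same $\delta^{-1/r}$ scaling.
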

\begin{proof} 
Testing \eqref{Sharps}$_3$ with $r|\psi|^{r-2} \psi$ and using integration by parts, we obtain 
\begin{equation}\label{4-1}
 \frac{\dd}{\dd t}\int_{\Omega} |\psi|^r\dd \Bx +  \frac{4 (r-1)}{r}\int_{\Omega} D|\nabla|\psi|^\frac{r}{2}|^2\dd \Bx + r\int_{\Omega}(\psi_b'\Bu \cdot \Be_z |\psi|^{r-2}\psi - D \varphi_b'' |\psi|^{r-2} \psi)\dd \Bx =0.
\end{equation} 
Recalling that $\psi_b'$ is a smooth function supported in the domain 
$$\Omega_\delta=\Omega\cap (\{-H<z<-H+\delta)\}\cup\{-\delta, 0)\})$$ 
and satisfies \eqref{phi_b2}, by H\"older's inequality and \eqref{Sharps}$_1$, we get 
\begin{equation}\nonumber
\begin{aligned}
\left| r\int_{\Omega}\psi_b'\Bu \cdot \Be_z |\psi|^{r-2} \psi\dd \Bx\right|  
&\leq  rc_\Delta \delta^{-1}\int_{\Omega_\delta} |u_z|  |\psi|^{r-1}\dd \Bx \leq r c_\Delta  \delta^{-1} \|u_z\|_{L^r(\Omega)} \|\psi\|_{L^r(\Omega_\delta)}^{r-1} \\
& \leq rc_\Delta \delta^{-1}\sup_{j}K_j(\|\partial_z p\|_{L^r(\Omega)}+ \|\psi \|_{L^r(\Omega)})\|\psi\|_{L^r(\Omega_\delta)}^{r-1}.
\end{aligned}
\end{equation}  
Then it follows from Lemmas \ref{estimate-p},  \ref{lemmaA5} and the definition of $\Omega_\delta $ that 
\begin{align}
\left|r\int_{\Omega}\psi_b'\Bu \cdot \Be_z |\psi|^{r-2}\psi\dd \Bx\right|  
& \leq rc_\Delta \delta^{-1}\sup_{j}K_j(1+C_p) \|\psi \|_{L^r(\Omega)}\|\psi\|_{L^r(\Omega_\delta)}^{r-1} \notag \\
& \leq rc_\Delta \delta^{-1}\sup_{j}K_j(1+C_p) \||\psi|^\frac{r}{2} \|_{L^2(\Omega)}^\frac{2}{r}\||\psi|^\frac{r}{2}\|_{L^2(\Omega_\delta)}^{\frac{2(r-1)}{r}}\notag \\
& \leq 2r c_\Delta  H^\frac{2}{r}\delta^{\frac{r-2}{r}}\sup_{j}K_j(1+C_p) \|\nabla|\psi|^\frac{r}{2} \|_{L^2(\Omega)}^2. \label{4-2}
\end{align}
Furthermore, by H\"older's inequality, Young's inequality and Lemma \ref{lemmaA5}, we have 
\begin{align}
\left|r\int_\Omega D \varphi_b'' |\psi|^{r-2} \psi \dd \Bx\right| & \leq 2rc_\Delta \delta^{-2}\max_{j}D_j \int_{\Omega_\delta} |\psi|^{r-1}\dd\Bx \notag \\
& \leq 2rc_\Delta \delta^{-2}\max_{j}D_j |\Omega_\delta|^\frac1r\|\psi\|_{L^r(\Omega_\delta)}^{r-1} \notag \\
& \leq 8rc_\Delta  L^\frac2r \delta^{-\frac1r}\max_{j}D_j  \|\nabla(|\psi|^\frac{r}{2}) \|_{L^2(\Omega_\delta)}^{\frac{2(r-1)}{r}} \notag \\
& \leq \frac{r-1}{r}\min_{j} D_j \|\nabla(|\psi|^\frac{r}{2})\|_{L^2(\Omega)}^2+ \frac{(8r c_\Delta   L^\frac2r \delta^{-\frac1r}\max_{j}D_j)^r}{r(\min_j D_j)^{r-1}}.
\label{4-3}
\end{align}
Choosing $\delta$ as in \eqref{delta_1}
and substituting \eqref{4-2}--\eqref{4-3} into \eqref{4-1}, we obtain
\begin{equation}\label{4-4}
 \frac{\dd}{\dd t}\|\psi\|_{L^r(\Omega)}^r + \frac{2(r-1)}{r}\|\sqrt{D}\nabla(|\psi|^\frac{r}{2})\|_{L^2(\Omega)}^2 \leq  \frac{(8r c_\Delta  L^\frac2r \delta^{-\frac1r}\max_{j}D_j)^r}{r(\min_j D_j)^{r-1}}=:M_0. 
\end{equation}
Then from Lemma \ref{lemmaA5} and \eqref{4-4} we infer that 
\begin{equation}\label{4-5}
 \frac{\dd}{\dd t} \|\psi\|_{L^r(\Omega)}^r + \frac{2(r-1)\min_j D_j}{r H^2}\|\psi\|_{L^r(\Omega)}^r\leq M_0,\quad \forall\, t>0.
\end{equation}  
Applying Gronwall's lemma to \eqref{4-5}, we obtain the estimate \eqref{1-1}. The estimate \eqref{1-2} is a direct consequence of \eqref{1-1}. The proof is complete.   
\end{proof}

\begin{lemma}[$\mathcal{H}^s$-estimate for $\psi$]
\label{Hs-es}
Let $s\in (\frac12,1)$. Suppose that $\psi_0 \in \mathcal{H}^s$ and $\delta$ satisfies \eqref{delta_1} with $r = \frac{6}{3-2s}>2$. Then $\psi\in L^\infty(0, T;\mathcal{H}^s)\cap L^2(0,T;\mathcal{H}^{1+s})\cap H^1(0,T;\mathcal{H}^{s-1})$ for any $T>0$ and satisfies 
\begin{align}
  \|\L^{\frac{s}{2}}\psi(t)\|_{L^2(\Omega)}^2 
  & \leq \| \psi_0\|_{\mathcal{H}^s}^2 \exp\left[C \left( \|\psi\|_{L^\infty_t L^r_{\bm{x}}}^\frac{4}{2s-1}+1\right) t\right] \notag \\
  &\quad + C \int_0^t \exp\left[C\left( \|\psi\|_{L^\infty_t L^r_{\bm{x}}}^\frac{4}{2s-1}+1\right) (t-\tau)\right] \dd \tau,\quad \forall\, t>0, \label{H^s-5-1}
\end{align}
where $\|\psi\|_{L^\infty_t L^r_{\bm{x}}}$ is controlled by \eqref{1-1} such that 
\begin{align}
\|\psi\|_{L^\infty_t L^r_{\bm{x}}}^r \leq \|\psi_0\|_{L^r(\Omega)}^r  +\frac{rM_0 H^2}{2(r-1)\min_{j}D_j}. 
\label{uni-t-Lr}
\end{align}
\end{lemma}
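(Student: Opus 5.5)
I will derive the estimate at the level of Galerkin approximations built from the eigenfunctions $\{w_k\}$ of $\L$, and then pass to the limit using the resulting uniform bounds. Testing \eqref{Sharps}$_3$ with $\L^{s}\psi$ is legitimate at the Galerkin level, since $\L^s$ commutes with the projections. Using $(\L\psi,\L^s\psi)=\|\L^{\frac{1+s}{2}}\psi\|_{L^2}^2$ gives
\[
\frac12\frac{\dd}{\dd t}\|\L^{\frac s2}\psi\|_{L^2}^2+\|\L^{\frac{1+s}2}\psi\|_{L^2}^2
=-(\Bu\cdot\nabla\psi,\L^s\psi)-(\phi_b'u_z,\L^s\psi)+(D\phi_b'',\L^s\psi).
\]
The last two terms are lower order. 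For $(D\phi_b'',\L^s\psi)\le \|D\phi_b''\|_{\mathcal H^{s-1}}\|\L^{\frac{1+s}2}\psi\|_{L^2}$ I use that $D\phi_b''$ is smooth and supported away from the interfaces, so it lies in $L^2\subset\mathcal H^{s-1}$; this yields a constant. For the term with $\phi_b'u_z$, Lemma \ref{estimate-p} with $r=2$ gives $\|u_z\|_{L^2}\le C\|\psi\|_{L^2}$, so the term is bounded by $C\|\psi\|_{L^2}\|\L^{s}\psi\|_{L^2}$. Interpolating between $\mathcal H^0$ and $\mathcal H^{1+s}$ and applying Young's inequality absorbs it, leaving $C\|\psi\|_{L^2}^2\le C\|\psi\|_{L^r}^2$.

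The main obstacle is the convective term. Since $\nabla\cdot\Bu=0$ and $u_z=0$ at the top and bottom boundaries, I rewrite it as $(\Bu\psi,\nabla\L^s\psi)$. Duality between $\mathcal H^{1-s}$ and $\mathcal H^{s-1}$ gives
\[
|(\Bu\cdot\nabla\psi,\L^s\psi)|\le \|\nabla\cdot(\Bu\psi)\|_{\mathcal H^{s-1}}\|\L^{\frac{1+s}2}\psi\|\lesssim \|\Bu\psi\|_{H^{s}}\|\psi\|_{\mathcal H^{1+s}},
\]
which requires that $\nabla\cdot$ map $H^s$ into $\mathcal H^{s-1}=(\mathcal H^{1-s})^*$. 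Because $1-s<\frac12$, $\mathcal H^{1-s}$ coincides with $H^{1-s}$ with no boundary or interface constraints, so this is plausible. The difficulty is that $\Bu$ is discontinuous across the interfaces, so it is only in $H^{\sigma}$ for $\sigma<\frac12$, and $\Bu\psi\in H^s$ with $s>\frac12$ fails. I would therefore bound instead $\|\Bu\psi\|_{L^2}\le\|\Bu\|_{L^{r}}\|\psi\|_{L^{q}}$ with $r=\frac{6}{3-2s}$, controlling $\|\Bu\|_{L^r}\le C\|\psi\|_{L^r}$ by Lemma \ref{estimate-p}. I then pair this against $\|\nabla\L^s\psi\|_{L^2}$ using the paper's interpolation inequality between $\mathcal H^{s}$-type spaces and Sobolev spaces (Appendix): $\|\nabla \L^s\psi\|_{L^2}\lesssim\|\psi\|_{\mathcal H^{1+2s}}$ is too strong, so I would split the derivative with fractional duality $(\Bu\psi,\nabla\L^s\psi)\le\|\Bu\psi\|_{H^{\theta}}\|\nabla\L^s\psi\|_{H^{-\theta}}$ with $\theta<\frac12$ close to $s-\frac12$. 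Here $\|\Bu\psi\|_{H^\theta}$ is controlled by $\|\Bu\|_{W^{\theta,r}}\|\psi\|_{L^\infty}+\dots$, with the needed regularity of $\Bu$ below $\frac12$ coming from an $L^r$/$W^{\theta,r}$ variant of Lemma \ref{estimate-p}. The aim is an estimate of the form
\[
|(\Bu\cdot\nabla\psi,\L^s\psi)|\le C\|\psi\|_{L^r}\|\psi\|_{\mathcal H^{s}}^{a}\|\psi\|_{\mathcal H^{1+s}}^{2-a},\qquad a=2s-1,
\]
where the exponent is fixed by scaling with the $L^r$ norm, which scales like $\mathcal H^s$ in three dimensions. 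Young's inequality then gives $\frac14\|\psi\|^2_{\mathcal H^{1+s}}+C\|\psi\|_{L^r}^{\frac{2}{2s-1}}\|\psi\|^2_{\mathcal H^s}$. The stated exponent $\frac4{2s-1}$ is larger, and I would accept it with a $+1$ to absorb lower-order mixed terms.

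Combining these bounds gives
\[
\frac{\dd}{\dd t}\|\L^{\frac s2}\psi\|^2+\|\psi\|^2_{\mathcal H^{1+s}}\le C\bigl(\|\psi\|_{L^\infty_tL^r_{\bm x}}^{\frac4{2s-1}}+1\bigr)\|\L^{\frac s2}\psi\|^2+C.
\]
Here $\|\psi\|_{L^\infty_tL^r_{\bm x}}$ is bounded via \eqref{1-1}, which gives \eqref{uni-t-Lr} since $\mathcal H^s\subset H^s\subset L^{r}$ for $r=\frac6{3-2s}$; this embedding is why this $r$ is chosen, and it is also why \eqref{delta_1} is needed. Gronwall's inequality yields \eqref{H^s-5-1}. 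Integrating in time gives the $L^2(0,T;\mathcal H^{1+s})$ bound. The bound on $\partial_t\psi$ in $\mathcal H^{s-1}$ then follows from the equation together with the nonlinear estimate above. Finally I pass to the limit in the Galerkin scheme by compactness.
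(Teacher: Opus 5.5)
Your treatment of the two lower-order terms and the Gronwall step is fine. The gap is in the convective term, which is the only real difficulty: it is never actually estimated, and the route you choose cannot close.

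\textbf{Why the integration-by-parts route fails.} Rewriting the term as $(\Bu\psi,\nabla\mathcal{L}^s\psi)$ moves the derivative onto the discontinuous velocity. You correctly note that $\Bu\psi\notin H^s$ for $s>\frac12$, but the proposed remedy does not balance derivatives. The factor $\|\nabla\mathcal{L}^s\psi\|_{H^{-\theta}}$ is essentially $\|\mathcal{L}^s\psi\|_{H^{1-\theta}}$, so it requires $\psi\in\mathcal{H}^{1+2s-\theta}$. Since $1+2s-\theta>1+s$ whenever $\theta<s$, any choice $\theta<\frac12<s$ exceeds the $\mathcal{H}^{1+s}$ regularity supplied by the dissipation. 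The choices $\theta\ge s$ that would close are exactly those excluded by the jump of $\Bu$. So your displayed ``target'' inequality is asserted, not derived.

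\textbf{The exponent count is also off.} A trilinear term carrying $1+2s$ derivatives in three dimensions, with $\|\psi\|_{L^r}$ scaling like $\|\psi\|_{\mathcal{H}^s}$, forces $a=s-\frac12$, not $2s-1$. With $a=s-\frac12$, Young's inequality produces exactly $\|\psi\|_{L^r}^{\frac{4}{2s-1}}$. The exponent in \eqref{H^s-5-1} is therefore the natural one, not a lossy artifact. The bound with $\frac{2}{2s-1}$ that you suggest is inconsistent with that scaling.

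\textbf{The missing idea.} Do not integrate by parts. Leave the derivative on $\psi$, which is piecewise $H^{1+s}$; the interface jump of $\partial_z\psi$ is harmless in $L^p$. Then apply H\"older with exponents $r=\frac{6}{3-2s}$, $3$, $\frac{6}{1+2s}$:
\[
|(\Bu\cdot\nabla\psi,\mathcal{L}^s\psi)|\le\|\Bu\|_{L^r}\,\|\nabla\psi\|_{L^3}\,\|\mathcal{L}^s\psi\|_{L^{\frac{6}{1+2s}}}.
\]
Each factor is controlled as follows.
\begin{itemize}
\item $\|\Bu\|_{L^r}\le C\|\psi\|_{L^r}$ by Lemma~\ref{estimate-p}.
\item $\|\nabla\psi\|_{L^3}\le C\sum_j\|\psi\|_{H^{3/2}(\Omega_j)}\le C\|\psi\|_{\mathcal{H}^{3/2}}\le C\|\psi\|_{\mathcal{H}^s}^{s-\frac12}\|\psi\|_{\mathcal{H}^{1+s}}^{\frac32-s}$, by layerwise Sobolev embedding and Lemmas~\ref{lem-B3} and~\ref{lem-B1}.
\item $\|\mathcal{L}^s\psi\|_{L^{6/(1+2s)}}\le C\|\mathcal{L}^s\psi\|_{H^{1-s}}\le C\|\psi\|_{\mathcal{H}^{1+s}}$ by Lemma~\ref{lem-B2}, since $1-s<\frac12$.
\end{itemize}
Young's inequality then gives $\frac16\|\mathcal{L}^{\frac{1+s}{2}}\psi\|_{L^2}^2+C\|\psi\|_{L^r}^{\frac{4}{2s-1}}\|\mathcal{L}^{\frac s2}\psi\|_{L^2}^2$.

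In your own duality formulation, this amounts to bounding $\|\Bu\cdot\nabla\psi\|_{\mathcal{H}^{s-1}}$ by $\|\Bu\cdot\nabla\psi\|_{L^{6/(5-2s)}}$ instead of rewriting the term as a divergence. That same bound is what yields $\partial_t\psi\in L^2(0,T;\mathcal{H}^{s-1})$, so your time-derivative step also depends on it.

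\textbf{A smaller point about the Galerkin scheme.} The bound \eqref{1-1} comes from testing with $|\psi|^{r-2}\psi$, which is not admissible for spectral Galerkin approximants. So in your scheme the uniform $L^r$ bound is not directly available for $\psi_n$. You would need either to run the estimate on the solution itself, as the paper does at the level of a priori estimates, or to first obtain a local bound via $\|\psi_n\|_{L^r}\le C\|\psi_n\|_{\mathcal{H}^s}$ and then continue.
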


\begin{proof} 
Since the space $\mathcal{H}^s$ is continuously embedded into $L^r(\Omega)$ with $r = \frac{6}{3-2s}$, we have $\psi_0\in L^r(\Omega)$. Then by Lemma \ref{L^r-estimate}, we see that $\psi\in L^\infty(0,\infty;L^r(\Omega))$ satisfying the uniform in time estimate \eqref{uni-t-Lr}. 
Next, testing equation \eqref{Sharps}$_3$ by $\mathcal{L}^{s} \psi$, after integration by parts, we obtain
\begin{equation}\label{H^s-1}
\frac{1}{2}\frac{\dd}{\dd t}\|\L^{\frac{s}{2}}\psi\|_{L^2(\Omega)}^2 + \|\mathcal{L}^{\frac{1+s}{2}}\psi\|_{L^2(\Omega)}^2= -(\Bu\cdot\nabla \psi, \mathcal{L}^{s}\psi) - (\phi_b'\Bu \cdot \Be_z, \mathcal{L}^{s}\psi) + ( D\varphi_b'', \mathcal{L}^{s}\psi).
\end{equation}
Using H\"older's inequality, Lemma \ref{estimate-p} and \eqref{Sharps}$_1$, we have
\begin{equation}\nonumber
\begin{aligned}
|(\Bu\cdot\nabla \psi, \mathcal{L}^{s}\psi)| 
&\leq \|\Bu\|_{L^r(\Omega)}\|\nabla \psi\|_{L^{3}(\Omega)}\|\mathcal{L}^{s}\psi\|_{L^\frac{6}{1+2s}(\Omega)}\\
&\leq C(\|\nabla p\|_{L^r(\Omega)}+\|\psi\|_{L^r(\Omega)})\|\nabla \psi\|_{L^{3}(\Omega)}\|\mathcal{L}^{s}\psi\|_{L^\frac{6}{1+2s}(\Omega)}\\
&\leq C \|\psi\|_{L^r(\Omega)}\|\nabla \psi\|_{L^{3}(\Omega)}\|\mathcal{L}^{s}\psi\|_{L^\frac{6}{1+2s}(\Omega)}.
\end{aligned}
\end{equation}
By the Sobolev embedding theorem and Lemmas \ref{lem-B1}--\ref{lem-B3}, we find
\begin{equation}\nonumber
\begin{aligned}
\|\nabla \psi\|_{L^{3}(\Omega)} 
& \leq C\sum_{j=1}^l\|\nabla \psi\|_{L^{3}(\Omega_j)}  \leq C\sum_{j=1}^l\| \psi\|_{H^{\frac32}(\Omega_j)} \leq C\|\psi\|_{\mathcal{H}^\frac32}\\
& = C\|\mathcal{L}^\frac34\psi\|_{L^2(\Omega)} \leq C\|\mathcal{L}^\frac{s}{2}\psi\|_{L^2(\Omega)}^{s-\frac12}\|\mathcal{L}^\frac{1+s}{2}\psi\|_{L^2(\Omega)}^{\frac32 -s}
\end{aligned}
\end{equation} 
and 
\begin{equation}\nonumber
\begin{aligned}
\|\mathcal{L}^{s}\psi\|_{L^\frac{6}{1+2s}(\Omega)} \leq C\|\mathcal{L}^{s}\psi\|_{H^{1-s}(\Omega)}\leq C\|\mathcal{L}^{\frac{1+s}{2}}\psi\|_{L^2(\Omega)}.
\end{aligned}
\end{equation}
Then it follows from Young's inequality that  
\begin{align}
|(\Bu\cdot\nabla \psi, \mathcal{L}^{s}\psi)| 
& \leq C \|\psi\|_{L^r(\Omega)}\|\mathcal{L}^\frac{s}{2}\psi\|_{L^2(\Omega)}^{s-\frac12}\|\mathcal{L}^{\frac{1+s}{2}}\psi\|_{L^2(\Omega)}^{\frac52-s} \notag \\
& \leq \frac16\|\mathcal{L}^{\frac{1+s}{2}}\psi\|_{L^2(\Omega)}^2+ C\|\psi\|_{L^r(\Omega)}^\frac{4}{2s-1} \|\mathcal{L}^{\frac{s}{2}}\psi\|_{L^2(\Omega)}^2. 
\label{H^s-2}
\end{align}
Furthermore, by virtue of \eqref{Sharps}$_1$ and Lemma \ref{lem-B1}, we get  
\begin{align}
|(\phi_b'\Bu \cdot \Be_z, \mathcal{L}^{s}\psi) | 
& \leq C\|\Bu\|_{L^2(\Omega)} \| \mathcal{L}^{s}\psi \|_{L^2(\Omega)} \leq C\|\psi\|_{L^2(\Omega)} \| \mathcal{L}^{\frac{1+s}{2}}\psi \|_{L^2(\Omega)} \notag \\
& \leq C\|\mathcal{L}^\frac{s}{2}\psi\|_{L^2(\Omega)} \| \mathcal{L}^{\frac{1+s}{2}}\psi \|_{L^2(\Omega)} \notag \\
& \leq
\frac16\|\mathcal{L}^{\frac{1+s}{2}}\psi\|_{L^2(\Omega)}^2 + C\|\mathcal{L}^\frac{s}{2}\psi\|_{L^2(\Omega)}^2
\label{H^s-3}
\end{align}
and 
\begin{equation}\label{H^s-4}
\begin{aligned}
|( D\varphi_b'', \mathcal{L}^{s}\psi)| & \leq   \|D\varphi_b''\|_{L^2(\Omega)}\|\mathcal{L}^{s}\psi\|_{L^2(\Omega)} \leq C\|\mathcal{L}^{\frac{1+s}{2}}\psi\|_{L^2(\Omega)} \leq \frac16\|\mathcal{L}^{\frac{1+s}{2}}\psi\|_{L^2(\Omega)}^2 + C.
\end{aligned}
\end{equation}
Substituting \eqref{H^s-2}--\eqref{H^s-4} into \eqref{H^s-1}, we obtain
\begin{equation}\label{H^s-5}
 \frac{\dd}{\dd t}\|\L^{\frac{s}{2}}\psi\|_{L^2(\Omega)}^2 + \|\mathcal{L}^{\frac{1+s}{2}}\psi\|_{L^2(\Omega)}^2 
 \leq C\left(\|\psi\|_{L^r(\Omega)}^\frac{4}{2s-1} +1\right) \|\mathcal{L}^\frac{s}{2}\psi\|_{L^2(\Omega)}^2 + C.
\end{equation}
From \eqref{H^s-5}, Gronwall's lemma and \eqref{uni-t-Lr}, we infer that $\psi\in L^\infty(0, T;\mathcal{H}^s)\cap L^2(0,T;\mathcal{H}^{1+s})$ for any $T>0$ and satisfies the estimate \eqref{H^s-5-1}. Finally, by comparison in \eqref{Sharps}$_3$, we get  
\begin{align*}
\|\partial_t \psi\|_{\mathcal{H}^{s-1}} 
&\leq  \| \Bu\cdot\nabla \psi\|_{\mathcal{H}^{s-1}}  +  \|\phi_b'\Bu \cdot \Be_z \|_{\mathcal{H}^{s-1}}+ \| \mathcal{L}\psi\|_{\mathcal{H}^{s-1}}+\|D \varphi_b''\|_{\mathcal{H}^{s-1}}\\
&\leq  C\left( \| \Bu\cdot\nabla \psi\|_{L^{\frac{6}{5-2s}}(\Omega)}  +  \|\phi_b'\Bu \cdot \Be_z \|_{L^{\frac{6}{5-2s}}(\Omega)} + \| \psi\|_{\mathcal{H}^{1+s}}+\|D \varphi_b''\|_{L^{\frac{6}{5-2s}}(\Omega)}\right)\\
&\leq C\|\psi\|_{L^r(\Omega)}\|\nabla \psi\|_{L^3(\Omega)} + C\| \psi\|_{\mathcal{H}^{1+s}} +C\\
&\leq C\|\psi\|_{\mathcal{H}^s}^{\frac12+s} \|\psi\|_{\mathcal{H}^{1+s}}^{\frac32-s}
+ C\| \psi\|_{\mathcal{H}^{1+s}} +C,
\end{align*}
which implies that $\partial_t\psi\in L^2(0,T;\mathcal{H}^{s-1})$. The proof is complete. 
\end{proof}

Now we are in a position to prove Theorem \ref{H^s-estimate}.
\begin{proof}[Proof of Theorem \ref{H^s-estimate}] 
The existence of a global solution with required regularity properties can be established by the Faedo--Galerkin approximation scheme described in \cite{CNW2025}, the \textit{a priori} estimates in Lemmas \ref{L^r-estimate}, \ref{Hs-es} together with the compactness argument. Hence, we omit the details. 

Next, we show the continuous dependence of the solution on the initial datum. Let $(\Bu_1, p_1, \psi_1)$ and $(\Bu_2, p_2, \psi_2)$ be two solutions corresponding to the initial data $\psi_{1,0}$ and $\psi_{2,0}$, respectively. Then their difference 
$(\overline{\Bu}, \overline{\psi}, \overline{p}) : = (\Bu_1 - \Bu_2, \psi_1 - \psi_2, p_1 - p_2)$ satisfies
\begin{equation}\label{H^s-6}
\left\{\begin{aligned}
&\overline{\Bu} = -K(\nabla \overline{p} + \overline{\psi} \Be_z),\\
&\nabla \cdot \overline{\Bu} = 0,\\
&\partial_t \overline{\psi} +\Bu_1\cdot\nabla \overline{\psi} + \overline{\Bu}\cdot\nabla  \psi_2 + \phi_b'\overline{\Bu}\cdot \Be_z - \operatorname{div}(D\nabla\overline{\psi}) = 0,
\end{aligned}\right.
\end{equation}
and 
\begin{equation}\label{H^s-6-ini}
 \overline{\psi}|_{t=0} =   \overline{\psi}_{0}:= \psi_{1,0} -\psi_{2,0}.
\end{equation}
Testing \eqref{H^s-6}$_3$ by $\mathcal{L}^{s}\overline{\psi}$ and using integration by parts, we obtain
\begin{equation}\label{H^s-7}
\frac{1}{2}\frac{\dd}{\dd t}\|\L^{\frac{s}{2}}\overline{\psi}\|_{L^2(\Omega)}^2 + \|\mathcal{L}^{\frac{1+s}{2}}\overline{\psi}\|_{L^2(\Omega)}^2= -(\Bu_1\cdot\nabla \overline{\psi}, \L^s\overline{\psi})-(\overline{\Bu}\cdot\nabla \psi_2, \L^s\overline{\psi}) - (\phi_b'\overline{\Bu} \cdot \Be_z, \L^s\overline{\psi}).
\end{equation}
Similar to \eqref{H^s-2}--\eqref{H^s-3}, we can derive the following inequality
\begin{equation}\nonumber
\begin{aligned}
 & \frac{\dd}{\dd t}\|\L^{\frac{s}{2}}\overline{\psi}\|_{L^2(\Omega)}^2 + \frac12\|\mathcal{L}^{\frac{1+s}{2}}\overline{\psi}\|_{L^2(\Omega)}^2 \\
 &\quad \leq C\|\psi_1 \|_{L^r(\Omega)}^\frac{4}{2s-1} \|\mathcal{L}^{\frac{s}{2}}\overline{\psi}\|_{L^2(\Omega)}^2  + C\|\overline{\psi}\|_{L^r(\Omega)}^\frac{4}{2s-1} \|\mathcal{L}^{\frac{s}{2}}\psi_2\|_{L^2(\Omega)}^2  + C\|\mathcal{L}^\frac{s}{2}\overline{\psi}\|_{L^2(\Omega)}^2,
 \end{aligned}
\end{equation}
where $r = \frac{6}{3-2s}$. Using the Sobolev embedding theorem, we have
\begin{align}\nonumber
& \|\psi_1 \|_{L^r(\Omega)} 
\leq C\|\L^\frac{s}{2}\psi_1 \|_{L^2(\Omega)},\quad \|\overline{\psi}\|_{L^r(\Omega)}  
\leq C\|\L^\frac{s}{2}\overline{\psi}\|_{L^2(\Omega)}.
\end{align}
Then it follows that 
\begin{align}
& \frac{\dd}{\dd t}\|\L^{\frac{s}{2}}\overline{\psi}\|_{L^2(\Omega)}^2 +\|\mathcal{L}^s\overline{\psi}\|_{L^2(\Omega)}^2
\notag  \\
& \quad \leq C\|\L^\frac{s}{2}\psi_1\|_{L^2(\Omega)}^\frac{4}{2s-1} \|\mathcal{L}^{\frac{s}{2}}\overline{\psi}\|_{L^2(\Omega)}^2  + C\|\L^\frac{s}{2}\overline{\psi}\|_{L^2(\Omega)}^\frac{4}{2s-1} \|\mathcal{L}^{\frac{s}{2}}\psi_2\|_{L^2(\Omega)}^2  + C\|\mathcal{L}^\frac{s}{2}\overline{\psi}\|_{L^2(\Omega)}^2 
\notag \\
& \quad \leq C\|\L^\frac{s}{2}\overline{\psi}\|_{L^2(\Omega)}^2 \left( \|\mathcal{L}^{\frac{s}{2}}\psi_1\|_{L^2(\Omega)}+ \|\mathcal{L}^{\frac{s}{2}}\psi_2\|_{L^2(\Omega)}  \right)^ { \frac{6-4s}{2s-1} }\|\mathcal{L}^{\frac{s}{2}}\psi_2\|_{L^2(\Omega)}^2 
\notag \\
&\qquad + C\|\mathcal{L}^\frac{s}{2}\overline{\psi}\|_{L^2(\Omega)}^2 (\|\L^\frac{s}{2}\psi_1\|_{L^2(\Omega)}^\frac{4}{2s-1}+1) 
\notag \\
&\quad \leq C\|\mathcal{L}^\frac{s}{2}\overline{\psi}\|_{L^2(\Omega)}^2 \left(\|\psi_1\|_{\mathcal{H}^s}^\frac{4}{2s-1} 
+ \| \psi_2\|_{\mathcal{H}^s}^\frac{4}{2s-1}+1\right).
\label{H^s-8}
\end{align}
An application of Gronwall's lemma to \eqref{H^s-8} gives the continuous dependence estimate \eqref{S-conti-Hs}. This together with the fact that $\psi_i\in L^\infty(0,T;\mathcal{H}^s)$ for any $T>0$ and $i=1,2$, yields the continuous dependence of $\psi(t)$ on its initial datum in $\mathcal{H}^s$. In particular, if $\psi_{1,0} =\psi_{2,0}$, then $\psi_1(t)=\psi_2(t)$ for all $t\geq 0$. Hence, the solution is unique. 
\end{proof}

Concerning the diffuse interface model \eqref{Diffuse-s}--\eqref{bc-epsilon} with continuous coefficients $K^\varepsilon$, $D^\varepsilon$, we introduce analogously the higher-order space 
\begin{equation}\nonumber
\mathcal{W}^\varepsilon = \{\psi \in \mathcal{V}:~\partial_x \psi,\,\partial_y \psi \in H^1(\Omega),~D^\varepsilon\partial_z \psi \in H^1(\Omega)\}
\end{equation}
endowed with the following norm    
\[\|\psi\|_{\mathcal{W}^\varepsilon}^2  =\|\psi\|_{H^1(\Omega)}^2 
+ \|\partial_x \psi\|_{H^1(\Omega)}^2
+ \|\partial_y \psi\|_{H^1(\Omega)}^2
+ \|D^\varepsilon\partial_z \psi\|_{H^1(\Omega)}^2.\]
By the construction of $D^\varepsilon$, $\|(D^\varepsilon)'\|_{L^\infty(\Omega)}$ is bounded for any fixed $0<\varepsilon\ll 1$. Using this fact, we can verify that $\mathcal{W}^\varepsilon=\mathcal{V}\cap H^2(\Omega)$. Next, we define the self-adjoint positive operator
$$
\L^\varepsilon \psi := -\div(D^\varepsilon \nabla\psi),\quad \forall\,\psi\in \mathcal{W}^\varepsilon,
$$ 
which admits properties similar to those of $\mathcal{L}$ (see Appendix \ref{sec-appB}). Moreover, the fractional spaces $\mathcal{H}_\varepsilon^s= \mathrm{dom}((\mathcal{L}^\varepsilon)^\frac{s}{2})$ are defined as the closure of $\mathbb{V}$ in the $\mathcal{H}_\varepsilon^s$-norm ($s\in \mathbb{R}$). 
Then we have 
$$
\mathcal{H}_\varepsilon^0=\mathcal{H},\quad  \mathcal{H}^1_\varepsilon=\mathcal{V}, \quad  \mathcal{H}^2_\varepsilon=\mathcal{W}^\varepsilon.
$$
Similarly to Lemma \ref{lem-B2}, the fractional $\mathcal{H}_\varepsilon^s$-norm ($0<s<1$) can be interpreted as the interpolation norm between $\mathcal{H}$ and $\mathcal{V}$, and is equivalent to the fractional Sobolev norm $\|\cdot\|_{H^s(\Omega)}$. For any $0<\varepsilon\ll 1$, we can identify $\mathcal{H}_\varepsilon^s=\mathcal{H}^s$ for $0<s<1$.

Analogous results can be obtained for the diffuse interface model \eqref{Diffuse-s}--\eqref{bc-epsilon} using similar arguments for the sharp interface model. In particular, we have the following \textit{a priori} $L^r$-estimate: 

\begin{lemma}\label{L^r-estimate-diffuse}
Let  $(\Bu^\varepsilon, p^\varepsilon, \psi^\varepsilon)$ be a global weak solution to problem \eqref{Diffuse-s}--\eqref{bc-epsilon}. For any given $r>2$, we assume that the constant $\delta$ in \eqref{phi_b1} satisfies \eqref{delta_1}. If $\psi_0^\varepsilon \in L^r(\Omega)$, then $\psi\in L^\infty(0,\infty;L^r(\Omega))$ satisfies
\begin{equation}\label{L^r-estimate-diffuse-1}
\|\psi^\varepsilon(t)\|_{L^r(\Omega)}^r \leq \|\psi_0^\varepsilon\|_{L^r(\Omega)}^r e^{-\frac{2(r-1)\min_{j}D_j}{rH^2}t} +\frac{rM_0 H^2}{2(r-1)\min_{j}D_j}\left(1- e^{-\frac{2(r-1)\min_{j}D_j}{r H^2}t}\right), 
\end{equation}
for any $t\ge 0$, where the constant $M_0$ is given by \eqref{M0} and is independent of $\varepsilon$. In particular, we have 
\begin{equation}\label{L^r-estimate-diffuse-2}
\|\psi^\varepsilon(t)\|_{L^r(\Omega)}^r\leq \frac{rM_0 H^2}{2(r-1)\min_{j}D_j}+1 
\end{equation}
for any  
\begin{equation}\nonumber
t\ge  T_0 (\|\psi_0^\varepsilon\|_{L^r (\Omega)})= \frac{r  H^2}{2(r-1)\min_j D_j} \ln (\|\psi_0^\varepsilon\|_{L^r(\Omega)}^r+1).
\end{equation}
\end{lemma}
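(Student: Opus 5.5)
The plan is to repeat the proof of Lemma \ref{L^r-estimate} verbatim for the diffuse system \eqref{Diffuse-s}--\eqref{bc-epsilon}, checking at each step that every constant depends only on $\min_j D_j$, $\max_j D_j$, $\sup_j K_j$, $c_\Delta$, $L$, $H$, $\delta$, $r$ and the elliptic constant $C_p$, and never on $\varepsilon$.

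\textbf{Bounds on the coefficients.} Since $K^\varepsilon$ and $D^\varepsilon$ are piecewise linear interpolations between neighbouring values $K_{j-1},K_j$ (respectively $D_{j-1},D_j$), we have
\[
\min_j D_j\le D^\varepsilon\le \max_j D_j,\qquad \min_j K_j\le K^\varepsilon\le \sup_j K_j,
\]
uniformly in $\varepsilon$.

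\textbf{The energy identity.} Test \eqref{Diffuse-s}$_3$ with $r|\psi^\varepsilon|^{r-2}\psi^\varepsilon$. The transport term vanishes because $\nabla\cdot\Bu^\varepsilon=0$ and $\Bu^\varepsilon\cdot\Be_z=0$ at $z=0,-H$. This gives the analogue of \eqref{4-1}, with dissipation
\[
\tfrac{4(r-1)}{r}\int_\Omega D^\varepsilon\,\bigl|\nabla|\psi^\varepsilon|^{r/2}\bigr|^2\dd\Bx\ \ge\ \tfrac{4(r-1)}{r}\min_j D_j\,\bigl\|\nabla|\psi^\varepsilon|^{r/2}\bigr\|_{L^2(\Omega)}^2 .
\]
For a weak solution this computation would be done at the Galerkin or regularized level and then passed to the limit. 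Alternatively one can use that $\psi^\varepsilon$ is regular enough, since the coefficients are Lipschitz for fixed $\varepsilon$. In either case the resulting inequality carries no $\varepsilon$-dependence.

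\textbf{The main obstacle: the elliptic estimate.} The key point is the $L^r$ estimate $\|p^\varepsilon\|_{W^{1,r}}\le C_p\|\psi^\varepsilon\|_{L^r}$ for
\[
-\nabla\cdot(K^\varepsilon\nabla p^\varepsilon)=\nabla\cdot(K^\varepsilon\psi^\varepsilon\Be_z),
\]
with $C_p$ independent of $\varepsilon$. The naive Calder\'on--Zygmund constant depends on the modulus of continuity of $K^\varepsilon$, which degenerates as $\varepsilon\to0$. I would instead invoke the result \cite[Theorem 2.6]{DL2021} used for Lemma \ref{estimate-p}. That result treats coefficients that are measurable (indeed arbitrary bounded) in one variable $z$, and its constant depends only on ellipticity bounds, $r$ and the domain. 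Since $K^\varepsilon$ depends only on $z$ and obeys the uniform bounds above, the same $C_p$ applies. Consequently
\[
\|\Bu^\varepsilon\|_{L^r}\le \sup_j K_j\,(1+C_p)\,\|\psi^\varepsilon\|_{L^r},
\]
with no dependence on $\varepsilon$. If necessary I would take $C_p$ in \eqref{delta_1} to be the maximum of the sharp and diffuse constants, which is still $\varepsilon$-independent.

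\textbf{Controlling the forcing terms.} The background profile $\phi_b$ is the same as in the sharp model, and $\phi_b'$, $\phi_b''$ are supported in $\Omega_\delta$. Once $\delta\ll\min_j(z_j-z_{j+1})$, this region avoids the transition layers, so $D^\varepsilon=D_1$ or $D_l$ there. Estimates \eqref{4-2} and \eqref{4-3} then go through unchanged: use H\"older, the Poincar\'e-type bound of Lemma \ref{lemmaA5} on the thin strip $\Omega_\delta$, and the choice \eqref{delta_1}. This yields
\[
\frac{\dd}{\dd t}\|\psi^\varepsilon\|_{L^r}^r+\frac{2(r-1)\min_j D_j}{rH^2}\|\psi^\varepsilon\|_{L^r}^r\le M_0 ,
\]
with $M_0$ as in \eqref{M0}.

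\textbf{Conclusion.} Applying Gronwall's lemma gives \eqref{L^r-estimate-diffuse-1}. For $t\ge T_0$ we have $\|\psi_0^\varepsilon\|_{L^r}^r e^{-ct}\le 1$, and \eqref{L^r-estimate-diffuse-2} follows.
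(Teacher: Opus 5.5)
Your proposal is correct and is essentially the paper's approach. The paper gives no separate proof of this lemma; it reruns the argument of Lemma \ref{L^r-estimate} and relies, as you do, on two facts: $K^\varepsilon$ and $D^\varepsilon$ take values in $[\min_j K_j,\max_j K_j]$ and $[\min_j D_j,\max_j D_j]$, and the pressure constant $C_p$ of Lemma \ref{estimate-p} depends only on these bounds and $r$, so that $\delta_1$ in \eqref{delta_1} and $M_0$ in \eqref{M0} are independent of $\varepsilon$. Your remark that the supports of $\phi_b'$ and $\phi_b''$ avoid the transition layers is harmless but not needed, since the bound $D^\varepsilon\le\max_j D_j$ already suffices for the analogue of \eqref{4-3}.
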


With the aid of Lemma \ref{L^r-estimate-diffuse}, we can prove the well-posedness of the diffuse interface model \eqref{Diffuse-s}--\eqref{bc-epsilon} in $\mathcal{H}^s$ with $s\in (\frac{1}{2},1)$. 
\begin{theorem}\label{prop1-1}
Suppose that $0<\varepsilon \ll 1$. For any initial data $\psi_0^\varepsilon \in L^2(\Omega)$, problem \eqref{Diffuse-s}--\eqref{bc-epsilon} admits a global weak solution $(\Bu,p,\psi)$ such that $\psi^\varepsilon \in L^\infty(0,T;L^2(\Omega)) \cap L^2(0,T;\mathcal{V})\cap H^1(0,T;\mathcal{V}^*)$, $p^\varepsilon\in L^2(0,T;\mathcal{V})$ and $\bm{u}^\varepsilon\in L^2(0,T;\bm{H})$ for any $T>0$. If, in addition, $\psi^\varepsilon_0\in \mathcal{H}^s$ for some $s\in (\frac12,1)$, then the solution is unique and satisfies 
\[\psi^\varepsilon\in L^\infty(0,T;\mathcal{H}^s)\cap L^2(0,T;\mathcal{H}_\varepsilon^{1+s})\cap H^1(0,T;\mathcal{H}_\varepsilon^{s-1}),~~~~\text{ for any }T>0,\]
which also implies $\psi^\varepsilon\in C([0,T];\mathcal{H}^s)$.  
Let $(\Bu_1^\varepsilon, \psi_1^\varepsilon, p_1^\varepsilon)$ and $(\Bu_2^\varepsilon, \psi_2^\varepsilon, p_2^\varepsilon)$ be two solutions corresponding to the initial data $\psi_{1,0}^\varepsilon$ and $\psi_{2,0}^\varepsilon$, respectively. Then the following continuous dependence estimate holds for any $t>0$:
\begin{align}
& \|(\L^\varepsilon)^{\frac{s}{2}}(\psi_1^\varepsilon(t)-\psi_2^\varepsilon(t))\|_{L^2(\Omega)}^2 
\notag \\
&\quad \leq \|(\L^\varepsilon)^{\frac{s}{2}}(\psi^\varepsilon_{1,0}-\psi_{2,0}^\varepsilon)\|_{L^2(\Omega)}^2 \exp\left(\int_0^t C\Big(\|\psi_1^\varepsilon(\tau)\|_{\mathcal{H}^s}^\frac{4}{2s-1}+ \| \psi_2^\varepsilon(\tau)\|_{\mathcal{H}^s}^\frac{4}{2s-1}+1\Big)\dd \tau\right).
\label{D-conti-Hs}
\end{align}
\end{theorem}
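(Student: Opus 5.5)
The proof will follow that of Theorem \ref{H^s-estimate} line by line, with $\mathcal{L}$ replaced by $\mathcal{L}^\varepsilon$ and Lemma \ref{L^r-estimate} replaced by Lemma \ref{L^r-estimate-diffuse}. \emph{Existence of weak solutions} for $\psi_0^\varepsilon\in L^2(\Omega)$ comes from a Faedo--Galerkin scheme on the eigenfunctions of $\mathcal{L}^\varepsilon$. Testing \eqref{Diffuse-s}$_3$ with $\psi^\varepsilon$ gives the basic energy estimate. The term $(\phi_b'\Bu^\varepsilon\cdot\Be_z,\psi^\varepsilon)$ is handled exactly as in the case $r=2$ of \eqref{4-2}: it is supported in $\Omega_\delta$, where $K^\varepsilon\equiv K_1$ or $K_l$ and $D^\varepsilon\equiv D_1$ or $D_l$ once $\varepsilon<\delta$. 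We combine this with the Poincar\'e inequality on $\Omega_\delta$ and the $L^2$ pressure bound of Lemma \ref{estimate-p}, which holds for $K^\varepsilon$ with the constant depending only on $\min_j K_j,\max_j K_j$. This yields bounds in $L^\infty(0,T;L^2)\cap L^2(0,T;\mathcal{V})$. The bound on $\partial_t\psi^\varepsilon$ in $L^2(0,T;\mathcal{V}^*)$ follows by comparison, using $\Bu^\varepsilon\cdot\nabla\psi^\varepsilon=\nabla\cdot(\Bu^\varepsilon\psi^\varepsilon)$ and bounding it in $L^{4/3}(0,T;\mathcal{V}^*)$ (or in $L^2$ for $r$ large). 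Aubin--Lions compactness then passes to the limit in the nonlinear term; $p^\varepsilon$ and $\Bu^\varepsilon$ are recovered from the elliptic problem \eqref{pressure} with $K^\varepsilon$.

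\emph{Higher regularity.} For $\psi_0^\varepsilon\in\mathcal{H}^s=\mathcal{H}^s_\varepsilon$, the embedding $\mathcal{H}^s\subset L^r$ with $r=6/(3-2s)$ and Lemma \ref{L^r-estimate-diffuse} give a uniform-in-time $L^r$ bound. We then test with $(\mathcal{L}^\varepsilon)^s\psi^\varepsilon$ and repeat \eqref{H^s-1}--\eqref{H^s-5}. The needed ingredients are:
\begin{itemize}
\item the $L^r$ pressure estimate of Lemma \ref{estimate-p} for the coefficient $K^\varepsilon$;
\item $\|\nabla\psi\|_{L^3}\le C\|\psi\|_{\mathcal{H}_\varepsilon^{3/2}}$;
\item $\|(\mathcal{L}^\varepsilon)^s\psi\|_{L^{6/(1+2s)}}\le C\|(\mathcal{L}^\varepsilon)^{\frac{1+s}2}\psi\|_{L^2}$, which follows from $\mathcal{H}^{1-s}_\varepsilon\simeq H^{1-s}$ for $1-s\in(0,1)$;
\item the interpolation inequality $\|\psi\|_{\mathcal{H}^{3/2}_\varepsilon}\le\|\psi\|_{\mathcal{H}^s_\varepsilon}^{s-\frac12}\|\psi\|_{\mathcal{H}^{1+s}_\varepsilon}^{\frac32-s}$, which is just spectral interpolation.
\end{itemize}
Since $D^\varepsilon$ is Lipschitz for fixed $\varepsilon$, elliptic regularity gives $\mathcal{H}^2_\varepsilon=\mathcal{W}^\varepsilon=\mathcal{V}\cap H^2$. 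Interpolating with $\mathcal{H}^1_\varepsilon=H^1_{(0)}$ then gives $\mathcal{H}^{3/2}_\varepsilon\hookrightarrow H^{3/2}$, which justifies the second item. At this stage the constants may depend on $\varepsilon$; the statement does not ask for uniformity. Gronwall then yields $\psi^\varepsilon\in L^\infty(0,T;\mathcal{H}^s)\cap L^2(0,T;\mathcal{H}^{1+s}_\varepsilon)$, and comparison in the equation gives $\partial_t\psi^\varepsilon\in L^2(0,T;\mathcal{H}^{s-1}_\varepsilon)$. The Lions--Magenes interpolation lemma for the Gelfand triple built on $\mathcal{H}^s_\varepsilon$ gives $\psi^\varepsilon\in C([0,T];\mathcal{H}^s)$.

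\emph{Uniqueness and the estimate \eqref{D-conti-Hs}.} We take the difference system, the analogue of \eqref{H^s-6} with $K^\varepsilon,D^\varepsilon$, and test with $(\mathcal{L}^\varepsilon)^s\overline{\psi}$. The three terms are estimated as in \eqref{H^s-7}--\eqref{H^s-8}, using $\|\overline{\psi}\|_{L^r}\le C\|(\mathcal{L}^\varepsilon)^{s/2}\overline{\psi}\|_{L^2}$ together with the equivalence $\|\cdot\|_{\mathcal{H}^s_\varepsilon}\simeq\|\cdot\|_{\mathcal{H}^s}$. Gronwall then gives \eqref{D-conti-Hs}, and uniqueness follows by taking equal initial data.

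The main obstacle is the functional-analytic step: identifying the fractional spaces $\mathcal{H}^{1+s}_\varepsilon$ and $\mathcal{H}^{1-s}_\varepsilon$ with Sobolev spaces well enough to obtain the $L^3$ gradient bound and the $L^{6/(1+2s)}$ bound. I would handle it through the $H^2$-regularity of $\mathcal{L}^\varepsilon$ (available since $D^\varepsilon\in W^{1,\infty}$) and complex interpolation, mirroring Lemmas \ref{lem-B1}--\ref{lem-B3}. A second point to check is the justification of the $r$-power test in the Galerkin setting, where I would use Lemma \ref{L^r-estimate-diffuse} directly at the level of smooth approximations.
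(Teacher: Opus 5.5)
Your plan is what the paper does for this theorem: it only points back to the proof of Theorem \ref{H^s-estimate}, with $\mathcal{L}$ replaced by $\mathcal{L}^\varepsilon$ and Lemma \ref{L^r-estimate} replaced by Lemma \ref{L^r-estimate-diffuse}. But the step you import verbatim for \eqref{D-conti-Hs} does not go through. In the difference estimate, the cross term treated by the template of \eqref{H^s-2} gives
\[
\big|(\overline{\Bu}^\varepsilon\cdot\nabla\psi_2^\varepsilon,(\mathcal{L}^\varepsilon)^s\overline{\psi})\big|
\le C\|\overline{\psi}\|_{L^r(\Omega)}\,\|\psi_2^\varepsilon\|_{\mathcal{H}^s}^{s-\frac12}\,\|\psi_2^\varepsilon\|_{\mathcal{H}^{1+s}_\varepsilon}^{\frac32-s}\,\|(\mathcal{L}^\varepsilon)^{\frac{1+s}{2}}\overline{\psi}\|_{L^2(\Omega)}.
\]
The factor $\|\psi_2^\varepsilon\|_{\mathcal{H}^{1+s}_\varepsilon}^{\frac32-s}$ belongs to $\psi_2^\varepsilon$, so the dissipation of $\overline{\psi}$ cannot absorb it. No other H\"older split avoids it, because $\psi_2^\varepsilon\in\mathcal{H}^s$ with $s<1$ gives no $L^p$ control of $\nabla\psi_2^\varepsilon$. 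The contribution $C\|\overline{\psi}\|_{L^r}^{\frac{4}{2s-1}}\|\mathcal{L}^{\frac s2}\psi_2\|_{L^2}^2$ recorded for this term in the display preceding \eqref{H^s-8} is not a valid bound. The term is quadratic in $\overline{\psi}$ and linear in $\psi_2$, while $\frac{4}{2s-1}>2$. Replacing $\overline{\psi}$ by $\lambda\overline{\psi}$ and letting $\lambda\to0$ would bound it by a fixed multiple of $\|\mathcal{L}^{\frac{1+s}{2}}\overline{\psi}\|_{L^2}^2$ uniformly in $\psi_2$. Replacing $\psi_2$ by $\mu\psi_2$ then shows this is impossible unless the term vanishes. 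So ``Gronwall then gives \eqref{D-conti-Hs}'' is not justified.

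After Young, the estimate actually gives
\[
\tfrac14\|(\mathcal{L}^\varepsilon)^{\frac{1+s}{2}}\overline{\psi}\|_{L^2}^2+C\|\psi_2^\varepsilon\|_{\mathcal{H}^s}^{2s-1}\|\psi_2^\varepsilon\|_{\mathcal{H}^{1+s}_\varepsilon}^{3-2s}\|(\mathcal{L}^\varepsilon)^{\frac s2}\overline{\psi}\|_{L^2}^2 .
\]
Since $3-2s<2$, the weight lies in $L^1(0,T)$ because $\psi_2^\varepsilon\in L^2(0,T;\mathcal{H}^{1+s}_\varepsilon)$. So you still obtain uniqueness and Lipschitz dependence in $\mathcal{H}^s$ on bounded intervals, which is enough for the semigroup. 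However, the exponent then contains $\int_0^t\|\psi_2^\varepsilon\|_{\mathcal{H}^{1+s}_\varepsilon}^{3-2s}\dd\tau$, not the $\mathcal{H}^s$-only exponent of \eqref{D-conti-Hs}.

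Reaching the stated form needs a genuinely different treatment that keeps all derivatives off $\psi_2^\varepsilon$. For example, you could pair $\nabla\psi_2^\varepsilon\in H^{s-1}$ against $\overline{\Bu}^\varepsilon(\mathcal{L}^\varepsilon)^s\overline{\psi}\in H^{1-s}$, which is admissible since $1-s<\frac12$. You would then bound $\overline{\Bu}^\varepsilon$ in $L^\infty$ through $\mathcal{H}^{\frac32+\eta}$-norms of $\overline{\psi}$ and layerwise elliptic regularity for $\overline{p}$.

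Two smaller points:
\begin{enumerate}
\item For general $L^2$ data your comparison argument only gives $\partial_t\psi^\varepsilon\in L^{4/3}(0,T;\mathcal{V}^*)$, because the energy class puts $\Bu^\varepsilon\psi^\varepsilon$ only in $L^{4/3}(0,T;L^2)$. The asserted $H^1(0,T;\mathcal{V}^*)$ therefore needs extra input, such as an $L^4$ bound on $\psi^\varepsilon$.
\item The $L^r$ test cannot be applied to Galerkin approximations, since $|\psi_n|^{r-2}\psi_n$ leaves the Galerkin space. Instead, obtain a local $\mathcal{H}^s$ solution from the scheme using $\|\psi_n\|_{L^r}\le C\|\psi_n\|_{\mathcal{H}^s}$, apply Lemma \ref{L^r-estimate-diffuse} to that regular solution, and continue.
\end{enumerate}
Your $H^2$-regularity route to $\|\nabla\psi\|_{L^3}\le C\|\psi\|_{\mathcal{H}^{3/2}_\varepsilon}$ is fine for this theorem. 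Its constant degenerates as $\varepsilon\to0$, though, so it would not serve the $\varepsilon$-uniform estimates used later in the paper.
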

As a consequence of Theorem \ref{prop1-1}, we have  
\begin{corollary} 
Let $s\in (\frac12,1)$ and $0<\varepsilon\ll 1$. Problem \eqref{Diffuse-s}--\eqref{bc-epsilon} generates a strongly continuous semigroup $\{S_\varepsilon(t)\}_{t\geq 0}$ on the phase space $\mathcal{H}^s$, that is, 
$$
S_\varepsilon(t): \mathcal{H}^s\to \mathcal{H}^s,\quad S_\varepsilon(t)\psi_0^\varepsilon=\psi^\varepsilon(t),\quad \forall\, t\geq 0, 
$$
where $\psi^\varepsilon$ is the unique weak solution to problem \eqref{Diffuse-s}--\eqref{bc-epsilon}  corresponding to the initial datum $\psi_0^\varepsilon \in \mathcal{H}^s$.
\end{corollary}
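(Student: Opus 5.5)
The corollary is a direct consequence of Theorem \ref{prop1-1}, and I will argue exactly as for the sharp-interface corollary. It suffices to check three things.
\begin{enumerate}[label=(\roman*)]
\item For every $\psi_0^\varepsilon\in\mathcal{H}^s$, the map $S_\varepsilon(t)$ is well defined with values in $\mathcal{H}^s$.
\item The semigroup identities $S_\varepsilon(0)=I$ and $S_\varepsilon(t+\tau)=S_\varepsilon(t)S_\varepsilon(\tau)$ hold.
\item The map $(t,\psi_0^\varepsilon)\mapsto S_\varepsilon(t)\psi_0^\varepsilon$ is continuous on $[0,\infty)\times\mathcal{H}^s$.
\end{enumerate}

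\emph{Well-definedness.} Theorem \ref{prop1-1} gives, for $\psi_0^\varepsilon\in\mathcal{H}^s$, a unique global weak solution with $\psi^\varepsilon\in C([0,T];\mathcal{H}^s)$ for every $T>0$. So $\psi^\varepsilon(t)\in\mathcal{H}^s$ is uniquely determined for every $t\ge0$, and $S_\varepsilon(t)$ is well defined. Continuity in time follows from the same statement, via the Lions--Magenes type interpolation between $L^2(0,T;\mathcal{H}_\varepsilon^{1+s})$ and $H^1(0,T;\mathcal{H}_\varepsilon^{s-1})$. Here I use the identification $\mathcal{H}_\varepsilon^s=\mathcal{H}^s$ for $0<s<1$ with equivalent norms, valid for fixed small $\varepsilon$.

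\emph{Semigroup property.} $S_\varepsilon(0)=I$ holds by the initial condition. For the composition rule, fix $\tau\ge0$. The time-shifted function $t\mapsto\psi^\varepsilon(t+\tau)$ satisfies the weak formulation of \eqref{Diffuse-s}--\eqref{bc-epsilon} with initial datum $\psi^\varepsilon(\tau)\in\mathcal{H}^s$, and it has the regularity required in Theorem \ref{prop1-1}. The weak formulation is autonomous, since $\phi_b$, $K^\varepsilon$ and $D^\varepsilon$ are time independent. Uniqueness in $\mathcal{H}^s$ then yields $S_\varepsilon(t+\tau)=S_\varepsilon(t)S_\varepsilon(\tau)$.

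\emph{Continuity in the initial datum.} Apply \eqref{D-conti-Hs} to two data $\psi_{1,0}^\varepsilon,\psi_{2,0}^\varepsilon$ in a bounded set of $\mathcal{H}^s$. The exponential factor is finite and locally uniform because $\psi_i^\varepsilon\in L^\infty(0,T;\mathcal{H}^s)$. The bound is uniform for data in bounded sets: it comes from the diffuse analogue of Lemma \ref{Hs-es}, combined with Lemma \ref{L^r-estimate-diffuse} with $r=6/(3-2s)$ and the embedding $\mathcal{H}^s\subset L^r$. Together with the norm equivalence of $(\L^\varepsilon)^{s/2}$ and the $\mathcal{H}^s$ norm, this makes $S_\varepsilon(t)$ Lipschitz on bounded sets, uniformly for $t\in[0,T]$.

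\emph{Joint continuity.} Combining the Lipschitz bound with $\psi^\varepsilon\in C([0,T];\mathcal{H}^s)$ through a triangle inequality gives joint continuity in $(t,\psi_0^\varepsilon)$, i.e.\ strong continuity of the semigroup.

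The only mildly delicate point is the local uniformity of the Gronwall exponent in \eqref{D-conti-Hs}. This needs an $\mathcal{H}^s$ bound for the diffuse problem depending only on $\|\psi_0^\varepsilon\|_{\mathcal{H}^s}$, which I take from the diffuse version of \eqref{H^s-5-1}. That version follows verbatim from the proof of Lemma \ref{Hs-es} once Lemma \ref{L^r-estimate-diffuse} is used in place of Lemma \ref{L^r-estimate}.
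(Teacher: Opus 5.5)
Your proposal is correct and takes the same route as the paper. The paper gives no separate proof: it states the corollary as an immediate consequence of Theorem \ref{prop1-1}, namely the unique solution in $C([0,T];\mathcal{H}^s)$ together with the continuous dependence estimate \eqref{D-conti-Hs}. Your checks (well-definedness, the semigroup law via autonomy and uniqueness, and locally uniform Lipschitz dependence on the datum, with the Gronwall exponent controlled by the diffuse analogue of \eqref{H^s-5-1}) spell out exactly those routine steps.
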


\section{Existence of Global Attractors}
In this section, we study the long-time behavior of global weak solutions to the sharp interface model \eqref{Sharps}--\eqref{bc} and the diffuse interface model \eqref{Diffuse-s}--\eqref{bc-epsilon} in the framework of infinite-dimensional dynamical systems. More precisely, we prove the existence of global attractors for both models in the phase space $\mathcal{H}^{s}$ with $s\in (\frac12,1)$. 

The following lemma (see \cite{Temam1997}) plays a crucial role in deriving uniform-in-time estimates.
\begin{lemma}[Uniform Gronwall lemma]
\label{lemmaUGL}
Let $g, h, y$ be three positive locally integrable functions on $(t_0,\infty)$ such that $y'$ is locally integrable on
$(t_0,\infty)$, and which satisfy
\begin{equation}
  y' \leq g y + h,\quad \forall\, t>t_0, \notag 
\end{equation} 
\begin{equation}
  \int_{t}^{t+r} g(s) \dd s \leq  a_1, ~~~\int_{t}^{t+r} h(s) \dd s \leq  a_2, ~~~\int_{t}^{t+r} y(s) \dd s \leq  a_3, \quad \forall\, t>t_0. \notag 
\end{equation} 
where $r,a_1,a_2,a_3$ are positive constants. Then
\begin{equation}
y(t+r) \leq \left(\frac{a_3}{r}+a_2\right) e^{a_1},\quad \forall\, t\geq  t_0.\notag 
\end{equation}
\end{lemma}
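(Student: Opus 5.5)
The statement to prove is the uniform Gronwall lemma (Lemma~\ref{lemmaUGL}). The plan is to combine the standard Gronwall integrating-factor argument on a short window with an averaging over the starting point of that window.

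Fix $t\ge t_0$ and take any $t_1\in[t,t+r]$. For $\tau\in[t_1,t+r]$, the inequality $y'\le gy+h$ is equivalent to
\[
\frac{\dd}{\dd\tau}\Big(y(\tau)e^{-\int_{t_1}^{\tau}g(\sigma)\dd\sigma}\Big)\le h(\tau)e^{-\int_{t_1}^{\tau}g(\sigma)\dd\sigma}\le h(\tau),
\]
where the last step uses that $g$ and $h$ are positive. The local integrability of $y'$ and $g$ makes this product absolutely continuous, so integrating from $t_1$ to $t+r$ is legitimate and gives
\[
y(t+r)\le \Big(y(t_1)+\int_{t_1}^{t+r}h\Big)e^{\int_{t_1}^{t+r}g}.
\]

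Since $[t_1,t+r]\subset[t,t+r]$ and $g,h>0$, the integrals on the right are bounded by $a_1$ and $a_2$ respectively. Hence
\[
y(t+r)\le \big(y(t_1)+a_2\big)e^{a_1}\quad\text{for every } t_1\in[t,t+r].
\]

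The only remaining issue is that $y(t_1)$ is not controlled pointwise. To handle it, I would integrate the last inequality in $t_1$ over $[t,t+r]$ and use $\int_t^{t+r}y\le a_3$. This gives
\[
r\,y(t+r)\le (a_3+ra_2)e^{a_1}.
\]
Dividing by $r$ yields the claimed bound $y(t+r)\le \left(\frac{a_3}{r}+a_2\right)e^{a_1}$.

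For the endpoint case $t=t_0$, where the hypotheses are stated only for $t>t_0$, I would take the bound for $t>t_0$ and pass to the limit using the continuity of $y$. That continuity follows from $y'\in L^1_{\rm loc}$, which makes $y$ absolutely continuous. I expect no step to be a genuine obstacle: the averaging in $t_1$ is the key idea, and the rest is routine.
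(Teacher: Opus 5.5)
Your proposal is correct and is essentially the standard argument from Temam's book, which the paper cites for this lemma without reproducing a proof. The integrating factor $e^{-\int_{t_1}^{\tau} g}$ on $[t_1,t+r]$ gives $y(t+r)\le (y(t_1)+a_2)e^{a_1}$, and averaging over $t_1\in[t,t+r]$ with $\int_t^{t+r}y\le a_3$ finishes it; your limiting argument via continuity of $y$ correctly covers the endpoint $t=t_0$, which is needed because the hypotheses are stated only for $t>t_0$.
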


\subsection{Sharp interface model} 
We first study the sharp interface model \eqref{Sharps}--\eqref{bc}. 

\begin{theorem}\label{attractor-sharp}
Let $s\in (\frac12,1)$. The semigroup $\{S_0(t)\}_{t\geq  0}$ associated with the sharp interface model \eqref{Sharps}--\eqref{bc}  possesses a global attractor $\mathscr{A}_0 \subset \mathcal{H}^s$, which is bounded in $\mathcal{V}$, compact and connected in $\mathcal{H}^s$. 
\end{theorem}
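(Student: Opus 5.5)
We use the standard criterion: a continuous semigroup on a Banach space that has a bounded absorbing set and a compact absorbing set (or is asymptotically compact) has a global attractor $\mathscr{A}_0=\omega(\mathcal{B})$. Connectedness follows because $\mathcal{H}^s$ is connected and the attractor is the $\omega$-limit set of a connected bounded absorbing ball. Continuity of $S_0(t)$ on $\mathcal{H}^s$ is already given by the Corollary to Theorem \ref{H^s-estimate}, via \eqref{S-conti-Hs}. So the work is to build, in order: an absorbing ball in $L^r$ with $r=\frac{6}{3-2s}$, then in $\mathcal{H}^s$, then in $\mathcal{V}$. Since $\mathcal{V}=\mathcal{H}^1\hookrightarrow\mathcal{H}^s$ compactly for $s<1$, a $\mathcal{V}$-bounded absorbing set is compact in $\mathcal{H}^s$. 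This gives existence together with boundedness of $\mathscr{A}_0$ in $\mathcal{V}$.

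\emph{Step 1 ($L^r$ and $L^2$ absorption).} Fix $\delta\le\delta_1(r)$ (and also $\le\delta_1(2^+)$ for an $L^2$ version). For bounded $B\subset\mathcal{H}^s\subset L^r$, \eqref{1-2} gives $\|\psi(t)\|_{L^r}^r\le R_r:=\frac{rM_0H^2}{2(r-1)\min_jD_j}+1$ for $t\ge T_0(B)$. Integrating the $L^2$ energy inequality (the analogue of \eqref{4-4} with $r=2$, or with Hölder from $L^r$) over $(t,t+1)$ yields $\int_t^{t+1}\|\nabla\psi\|_{L^2}^2\,\dd\tau\le C$ uniformly for $t\ge T_0$.

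\emph{Step 2 ($\mathcal{H}^s$ absorption).} Set $y=\|\L^{s/2}\psi\|^2$. Then \eqref{H^s-5} reads $y'\le g\,y+C$ with $g=C(\|\psi\|_{L^r}^{4/(2s-1)}+1)$, and by Step 1 $g\le C(R_r)$ for $t\ge T_0$. Interpolation gives $y\le \|\psi\|_{L^2}^{2(1-s)}\|\psi\|_{\mathcal{V}}^{2s}$, so $\int_t^{t+1}y\le C$. The uniform Gronwall Lemma \ref{lemmaUGL} then gives $y(t)\le C_s$ for $t\ge T_0+1$, with $C_s$ independent of $B$. Integrating \eqref{H^s-5} again gives $\int_t^{t+1}\|\psi\|_{\mathcal{H}^{1+s}}^2\le C$.

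\emph{Step 3 ($\mathcal{V}$ absorption, the main obstacle).} Test \eqref{Sharps}$_3$ with $\L\psi$ to get
\[
\tfrac12\tfrac{\dd}{\dd t}\|\psi\|_{\mathcal{V}}^2+\|\L\psi\|^2=-(\Bu\cdot\nabla\psi,\L\psi)-(\phi_b'u_z,\L\psi)+(D\phi_b'',\L\psi).
\]
The last two terms are bounded by $\frac14\|\L\psi\|^2+C(\|\psi\|^2+1)$, using Lemma \ref{estimate-p}. For the convective term, estimate $\|\Bu\|_{L^r}\|\nabla\psi\|_{L^{3/s}}\|\L\psi\|_{L^2}$ with $\frac1r+\frac{s}{3}=\frac12$. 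Bound $\|\nabla\psi\|_{L^{3/s}}$ layerwise by $\|\psi\|_{\mathcal{H}^{3/2-s+1/2}}\lesssim\|\psi\|_{\mathcal{H}^{2-s}}$ through Lemmas \ref{lem-B1}--\ref{lem-B3}; the piecewise Sobolev embedding is valid for exponents below $3/2$. Then interpolate between $\mathcal{H}^1$ and $\mathcal{H}^2$ and apply Young's inequality, which gives $y_1'\le g_1 y_1+h_1$ with $y_1=\|\psi\|_{\mathcal{V}}^2$ and $g_1$ depending on the already uniformly bounded $\|\psi\|_{\mathcal{H}^s}$. The delicate point is that $\nabla\psi$ is only piecewise regular across interfaces, so $\mathcal{H}^{\sigma}$ with $\sigma>3/2$ does not embed into $H^\sigma$. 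I would therefore either choose exponents keeping every space used at order $\le 3/2$, or use $\mathcal{H}^{1+s}$ directly together with the interpolation inequality of the Appendix, giving $\|\nabla\psi\|_{L^3}\lesssim\|\psi\|_{\mathcal{H}^{3/2}}$ as in Lemma \ref{Hs-es}. Concretely, $|(\Bu\cdot\nabla\psi,\L\psi)|\le C\|\psi\|_{L^r}\|\nabla\psi\|_{L^{3}}\|\L\psi\|_{L^{6/(1+2s)}}$ is not usable, so instead I would use $\|\Bu\|_{L^6}\lesssim\|\psi\|_{L^6}\lesssim\|\psi\|_{\mathcal{V}}$ and $\|\nabla\psi\|_{L^3}\lesssim\|\psi\|_{\mathcal{V}}^{1/2}\|\L\psi\|^{1/2}$. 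This yields $\|\psi\|_{\mathcal{V}}^{3/2}\|\L\psi\|^{3/2}\le\frac14\|\L\psi\|^2+C\|\psi\|_{\mathcal{V}}^6$, so $g_1=C\|\psi\|_{\mathcal{V}}^4$. Its time integral is controlled since $\int\|\psi\|_{\mathcal{V}}^4\le\sup\|\psi\|_{\mathcal{H}^s}^{\cdot}\int\|\psi\|_{\mathcal{H}^{1+s}}^{\cdot}$, and $\|\psi\|_{\mathcal{V}}\le\|\psi\|_{\mathcal{H}^s}^{s}\|\psi\|_{\mathcal{H}^{1+s}}^{1-s}$ with $4(1-s)\le2$ since $s>\frac12$. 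Lemma \ref{lemmaUGL} then gives a uniform $\mathcal{V}$ absorbing ball, which completes the argument.
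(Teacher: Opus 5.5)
Your proof is correct, but its key step takes a different route from the paper's.

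\textbf{How the paper gets the $\mathcal{V}$ bound.} The paper never builds an $\mathcal{H}^s$ absorbing ball. From the $L^2$ bound (Lemma \ref{prop2}) and the $L^r$ bound (Lemma \ref{L^r-estimate}) it goes straight to $\mathcal{V}$. It tests with $\L\psi$ and splits the convective term exactly as in your first attempt: $\|\Bu\|_{L^r}\|\nabla\psi\|_{L^{2r/(r-2)}}\|\L\psi\|_{L^2}$, with $2r/(r-2)=3/s$. The interface difficulty you worried about is handled by Lemma \ref{lemmaA1}, not by fractional Sobolev embeddings. That lemma applies Gagliardo--Nirenberg to $(\partial_x\psi,\partial_y\psi,D\partial_z\psi)\in H^1(\Omega)$ and gives $\|\nabla\psi\|_{L^{3/s}}\le C\|\nabla\psi\|_{L^2}^{1-3/r}\|\L\psi\|_{L^2}^{3/r}$. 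Since $r>3$, Young's inequality yields the \emph{linear} inequality \eqref{2-19}. Its Gronwall coefficient $M_3+M_4\|\psi\|_{L^r}^{2r/(r-3)}$ is already bounded after $T_0$, so one use of Lemma \ref{lemmaUGL} gives the $\mathcal{V}$-ball with explicit radius $M_5$. Your exponent bookkeeping in that abandoned attempt was off: $\nabla\psi\in L^{3/s}$ corresponds to layerwise $H^{5/2-s}$, and Lemma \ref{lem-B3} covers all orders below $2$. You do not rely on it, so this does not affect the proof.

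\textbf{How your route differs.} You use the cruder $L^6\times L^3$ split, which makes the coefficient superlinear, $g_1=C\|\psi\|_{\mathcal V}^4$. You pay for this with Step 2: the uniform Gronwall argument on \eqref{H^s-5}, plus $\|\psi\|_{\mathcal V}\le\|\psi\|_{\mathcal H^s}^{s}\|\psi\|_{\mathcal H^{1+s}}^{1-s}$ with $4(1-s)<2$, correctly controls $\int_t^{t+1}g_1$. This is sound, and as a by-product it gives an absorbing ball in the phase space $\mathcal{H}^s$ itself. It is longer, though, and the resulting radius is less explicit than $M_5$, which the paper reuses later.

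\textbf{Two minor points.}

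The parenthetical ``$\delta\le\delta_1(2^+)$'' is not meaningful. At $r=2$ the factor $\delta^{(r-2)/r}$ in \eqref{4-2} equals $1$, so shrinking $\delta$ cannot absorb the buoyancy term. That is exactly why the paper proves $L^2$ dissipativity by the separate thin-layer Poincar\'e/incompressibility argument of Lemma \ref{prop2}, which needs $\delta\le\delta_2$. Your alternative does work: bound the right-hand side of the $L^2$ energy identity using $\|u_z\|_{L^2}\le C\|\psi\|_{L^2}$ (Lemma \ref{estimate-p}) and the $L^r$ absorbing bound. It even shows that $\delta\le\delta_2$ is not needed for existence of the attractor.

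Testing with $\L\psi$ requires the regularization step the paper spells out in Lemma \ref{prop3}: choose $\tau'$ with $\psi(\tau')\in\mathcal{V}$ and invoke Proposition \ref{prop1}. You leave this implicit.
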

The following lemma yields the existence of an absorbing set in $\mathcal{H}$.
\begin{lemma}[Dissipative $L^2$-estimate]
\label{prop2}
Let $\psi_0\in \mathcal{H}$ and the constant $\delta$ in \eqref{phi_b1} satisfy
\begin{equation}
\label{delta_2}
0<\delta \leq \delta_2 :=   \frac{\min_{j} K_j \min_{j}D_j}{32(\max_{j} K_j)^2 c_\Delta}.
\end{equation} 
%
Then the solution $\psi$ of problem \eqref{Sharps}--\eqref{bc} satisfies
\begin{equation}\label{2-2}
\|\psi(t)\|_{L^2(\Omega)}^2 \leq \|\psi_0\|_{L^2(\Omega)}^2 e^{-\frac{\min_{j}D_j}{H^2}t} +\frac{M_1H^2}{\min_{j}D_j}\left(1- e^{-\frac{\min_{j}D_j}{H^2}t}\right),\quad \forall\,t\geq  0,
\end{equation}
where 
\begin{equation}\label{M1-delta}
M_1 = \frac{8 c_\Delta^2 L^2}{\delta}\frac{ (\max_j D_j )^2}{\min_{j} D_j }.
\end{equation}
In addition, for any  
$$t\ge T_1:=T_1(\|\psi_0\|_{L^2(\Omega)}) =\frac{H^2}{\min_{j}D_j}\ln (\|\psi_0\|_{L^2(\Omega)}^2+1),$$ it holds
\begin{equation}\label{2-3}
\|\psi(t) \|_{L^2(\Omega)}^2\leq  \frac{M_1H^2}{\min_{j}D_j}+1,\quad \int_t^{t+1} \|\sqrt{D}\nabla\psi(\tau) \|_{L^2(\Omega)}^2 \dd \tau \leq  \frac{M_1H^2 }{\min_{j}D_j } +1 +M_1.
\end{equation}
\end{lemma}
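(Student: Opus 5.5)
We test \eqref{Sharps}$_3$ with $\psi$ itself, mimicking the proof of Lemma \ref{L^r-estimate} with $r=2$. Since the convective term vanishes (by $\nabla\cdot\Bu=0$ and $\Bu\cdot\Be_z=0$ at $z=0,-H$), this gives
\[
\frac12\frac{\dd}{\dd t}\|\psi\|_{L^2(\Omega)}^2+\|\sqrt D\nabla\psi\|_{L^2(\Omega)}^2=-(\phi_b'\Bu\cdot\Be_z,\psi)+(D\phi_b'',\psi).
\]
The main point is the buoyancy term $(\phi_b'u_z,\psi)$. Lemma \ref{estimate-p} with a generic constant $C_p$ would give a $\delta$-condition involving $C_p$. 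Since $\delta_2$ contains only $K_j$, I would instead use the energy identity for Darcy's law. Testing $\Bu=-K(\nabla p+\psi\Be_z)$ with $K^{-1}\Bu$ and using $(\nabla p,\Bu)=0$ yields $\|K^{-1/2}\Bu\|^2=-(\psi\Be_z,\Bu)$, hence
\[
\|\Bu\|_{L^2}\le \frac{\max_jK_j}{\sqrt{\min_jK_j}}\,\|K^{-1/2}\Bu\|_{L^2}\le \frac{\max_jK_j}{\sqrt{\min_jK_j}}\,\sqrt{\max_jK_j}\,\|\psi\|_{L^2}.
\]
The precise powers should be arranged to match the ratio $\min K/(\max K)^2$ in $\delta_2$. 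Then, using $|\phi_b'|\le c_\Delta/\delta$ on $\Omega_\delta$,
\[
|(\phi_b'u_z,\psi)|\le \frac{c_\Delta}{\delta}\|u_z\|_{L^2}\|\psi\|_{L^2(\Omega_\delta)}.
\]
The boundary-layer Poincaré inequality (Lemma \ref{lemmaA5}) for functions vanishing at $z=0,-H$ gives $\|\psi\|_{L^2(\Omega_\delta)}\le C\delta\|\partial_z\psi\|_{L^2}$, with $C=\sqrt2$ or similar. Combined with the global Poincaré bound $\|\psi\|_{L^2}\le H\|\nabla\psi\|$, this lets the $1/\delta$ factor cancel. An alternative I would try first is to place all of $\Bu$ on a weighted norm and avoid $H$: bound $|(\phi_b'u_z,\psi)|\le \|K^{-1/2}\Bu\|^2/2+\ldots$ and absorb via the Darcy identity. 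I would pick whichever produces exactly $\delta\le\delta_2$, so that this term is at most $\frac14\min_jD_j\|\nabla\psi\|^2$. Matching the constant $32$ is the bookkeeping obstacle.

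For the source term, note that $D\phi_b''$ is supported in $\Omega_\delta$. The cleanest route is to integrate by parts using the Remark: $(D\phi_b'',\psi)=-(D\phi_b',\partial_z\psi)$, so
\[
|(D\phi_b'',\psi)|\le \max_jD_j\,\frac{c_\Delta}{\delta}\,|\Omega_\delta|^{1/2}\|\partial_z\psi\|.
\]
Since $|\Omega_\delta|=2L^2\delta$, Young's inequality bounds this by $\frac14\min_jD_j\|\nabla\psi\|^2+\frac{2c_\Delta^2L^2(\max D)^2}{\delta\min D}$. This reproduces $M_1$ up to the factor in \eqref{M1-delta}. We then arrive at
\[
\frac{\dd}{\dd t}\|\psi\|^2+\|\sqrt D\nabla\psi\|^2\le M_1.
\]

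Finally, by Poincaré, $\|\sqrt D\nabla\psi\|^2\ge \frac{\min D}{H^2}\|\psi\|^2$. Gronwall's lemma then gives \eqref{2-2}. For $t\ge T_1$, the exponential term is at most $1$, which gives the first bound in \eqref{2-3}. Integrating the differential inequality over $[t,t+1]$ and using this bound at time $t$ gives $\int_t^{t+1}\|\sqrt D\nabla\psi\|^2\le \|\psi(t)\|^2+M_1$, which is the second bound. For weak solutions, the computation is justified on Galerkin approximations and passed to the limit, as in Proposition \ref{prop1}.
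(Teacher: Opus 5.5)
Your source-term estimate, the Gronwall step and the integration over $[t,t+1]$ are fine; the integration by parts $(D\phi_b'',\psi)=-(D\phi_b',\partial_z\psi)$ is legitimate and gives a constant of the right order $\delta^{-1}$. The gap is the buoyancy term $(\phi_b'u_z,\psi)$, which is the only place where smallness of $\delta$ must enter. Along your route,
\[
|(\phi_b'u_z,\psi)|\le \frac{c_\Delta}{\delta}\,\|u_z\|_{L^2(\Omega)}\|\psi\|_{L^2(\Omega_\delta)}
\le \frac{c_\Delta}{\delta}\,C_K\|\psi\|_{L^2(\Omega)}\,\delta\,\|\partial_z\psi\|_{L^2(\Omega)}
\le c_\Delta C_K H\,\|\nabla\psi\|_{L^2(\Omega)}^2 ,
\]
where $C_K$ is the constant from your Darcy energy bound. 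The single factor $\delta$ from the thin-layer Poincar\'e inequality for $\psi$ is exactly cancelled by $|\phi_b'|\le c_\Delta/\delta$. The resulting coefficient $c_\Delta C_K H$ is therefore independent of $\delta$. No choice $\delta\le\delta_2$ can make it $\le\frac14\min_jD_j$; you would need a smallness assumption on $c_\Delta$ (small Rayleigh number), which the lemma does not make. This is exactly the $r=2$ endpoint of the argument in Lemma~\ref{L^r-estimate}, where the gain $\delta^{(r-2)/r}$ in \eqref{4-2} degenerates to $\delta^0$, so ``mimicking it with $r=2$'' cannot work. Your weighted alternative has the same defect. The $\psi$-energy identity contains no $\|K^{-1/2}\Bu\|^2$ dissipation to absorb into, since the Darcy identity only says this quantity equals $-(\psi,u_z)$. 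Young's inequality then leaves $\|\phi_b'\psi\|_{L^2}^2\le c_\Delta^2\delta^{-2}\|\psi\|_{L^2(\Omega_\delta)}^2\le c_\Delta^2\|\partial_z\psi\|_{L^2}^2$, again with no power of $\delta$.

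The missing idea is to gain a second factor of $\delta$ from the velocity. Since $u_z$ also vanishes at $z=0,-H$, Lemma~\ref{lemmaA5} on each strip gives $\|u_z\|_{L^2(\Omega_\delta)}\le 2\delta\|\partial_zu_z\|_{L^2(\Omega_\delta)}$. Combined with the same bound for $\psi$, the term is at most $4c_\Delta\delta\|\partial_zu_z\|_{L^2}\|\partial_z\psi\|_{L^2}$.

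You then need $\partial_zu_z$ in $L^2$, controlled by $\nabla\psi$; your bound only gives $\|\Bu\|_{L^2}\le C_K\|\psi\|_{L^2}$. Use incompressibility, $\partial_zu_z=-\partial_xu_x-\partial_yu_y$, together with the fact that $K=K(z)$. Differentiating the pressure problem \eqref{pressure} in $x$ gives $-\nabla\cdot(K\nabla\partial_xp)=\nabla\cdot(K\partial_x\psi\,\Be_z)$ with the same Neumann conditions. Hence $\|\partial_x\nabla p\|_{L^2}\le\frac{\max_jK_j}{\min_jK_j}\|\partial_x\psi\|_{L^2}$, so $\|\partial_x\Bu\|_{L^2}\le\frac{2(\max_jK_j)^2}{\min_jK_j}\|\partial_x\psi\|_{L^2}$, and likewise in $y$. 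This yields
\[
|(\phi_b'u_z,\psi)|\le \frac{8(\max_jK_j)^2c_\Delta\,\delta}{\min_jK_j}\|\nabla\psi\|_{L^2(\Omega)}^2,
\]
which is $\le\frac14\min_jD_j\|\nabla\psi\|_{L^2(\Omega)}^2$ exactly when $\delta\le\delta_2$. This is where the ratio $\min_jK_j/(\max_jK_j)^2$ and the constant $32$ in $\delta_2$ come from. With this estimate in place of yours, the rest of your argument goes through as written.
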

\begin{proof}
Testing \eqref{Sharps}$_3$ by $\psi$ and using integration by parts, we get 
\begin{equation}\nonumber
\frac12  \frac{\dd}{\dd t} \|\psi\|_{L^2(\Omega)}^2 + \|\sqrt{D}\nabla\psi\|_{L^2(\Omega)}^2 =  -   \int_\Omega  \phi_b' u_z \psi \dd \Bx + \int_\Omega D \varphi_b''\psi \dd \Bx .
\end{equation} 
By virtue of \eqref{phi_b1} and \eqref{phi_b2},  it holds 
\begin{equation}\label{2-3-0}
\left|\int_\Omega  \phi_b' u_z \psi \dd \Bx\right| = \left|\int_{\Omega_\delta}  \phi_b' u_z \psi \dd \Bx\right| \leq  c_\Delta \delta^{-1}\|u_z\|_{L^2(\Omega_\delta)} \|\psi\|_{L^2(\Omega_\delta)},
\end{equation} 
where $\Omega_\delta := \Omega \cap (\{-H<z<-H+\delta\} \cup \{-\delta <z<0\})$. Noting that both $u_z$ and $\psi$ vanish on  the boundary $\{z=0\}\cup \{z=-H\}$, we infer from Lemma \ref{lemmaA5} that 
\begin{equation}\label{2-3-1}
\|u_z\|_{L^2(\Omega_\delta)} \leq 2  \delta\|\partial_z u_z\|_{L^2(\Omega_\delta)},\quad \|\psi\|_{L^2(\Omega_\delta)} \leq  2 \delta\|\partial_z \psi\|_{L^2(\Omega_\delta)}.
\end{equation}
As a consequence, it holds 
\begin{align} 
\left|\int_\Omega  \phi_b' u_z \psi \dd \Bx\right| 
& \leq 4c_\Delta\delta\|\partial_z u_z\|_{L^2(\Omega_\delta)}\|\partial_z \psi\|_{L^2(\Omega_\delta)} \notag \\
& \leq  4c_\Delta\delta (\|\partial_x u_x\|_{L^2(\Omega)}+ \|\partial_y u_y\|_{L^2(\Omega)})\|\partial_z \psi\|_{L^2(\Omega)}.
\label{2-4}
\end{align}
Here, we use the incompressibility condition $\nabla \cdot \Bu =0$. In view of \eqref{Sharps}$_1$, we have
\begin{equation}\label{2-5}
\|\partial_x \Bu\|_{L^{2}(\Omega)} \leq \max_{j} K_j(\|\partial_x \nabla p\|_{L^{2}(\Omega)}+ \|\partial_x  \psi\|_{L^2(\Omega)}),
\end{equation}
where the pressure $p$ is determined by the elliptic problem \eqref{pressure}. Since the permeability coefficient $K$ is independent of $x$, one can take the derivative of \eqref{pressure} with respect to $x$ and obtain 
 \begin{equation}\nonumber\label{pressure_x}
  \left\{\begin{aligned}
&-\div(K\nabla  \partial_x p) = \div (K\partial_x \psi \Be_z) ,  ~~~~\text{ in } \Omega,\\
&\partial_z \partial_x p(x,-H)  = \partial_z \partial_x p(x,0)  = 0,
  \end{aligned}\right. 
\end{equation}
and $\partial_x p$ is periodic in the horizontal directions $(x,y)$. From the standard elliptic estimate, we get
\begin{equation}\nonumber
\|\partial_x\nabla p\|_{L^2(\Omega)} \leq   \frac{\max_{j} K_j}{\min_{j} K_j}\|\partial_x\psi\|_{L^2(\Omega)}.
\end{equation}
This, together with \eqref{2-5}, yields
\begin{equation}\nonumber
\|\partial_x \Bu\|_{L^{2}(\Omega)} \leq  \frac{2(\max_{j} K_j)^2}{\min_{j} K_j} \|\partial_x\psi\|_{L^2(\Omega)}.
\end{equation}
Similarly, we have  
\begin{equation}\nonumber
\|\partial_y \Bu\|_{L^{2}(\Omega)} \leq  \frac{2(\max_{j} K_j)^2}{\min_{j} K_j} \|\partial_y\psi\|_{L^2(\Omega)}.
\end{equation}
As a consequence, 
\begin{equation}\label{2-6}
\left|\int_\Omega  \phi_b' u_z \psi \dd \Bx \right|\leq   \frac{8(\max_{j} K_j)^2 c_\Delta \delta }{\min_{j} K_j} \|\nabla \psi\|_{L^2(\Omega)}^2.
\end{equation}
On the other hand, using H\"older's inequality, Young's inequality and \eqref{2-3-1}, we find
\begin{align}
\left|\int_\Omega  D \varphi_b''\psi \dd \Bx\right| & \leq 2c_\Delta \delta^{-2} \max_j D_j  |\Omega_\delta|^\frac12 \| \psi\|_{L^2(\Omega_\delta)}  \notag \\
& \leq 4c_\Delta L  \delta^{-\frac12} \max_j D_j    \| \partial_z\psi\|_{L^2(\Omega_\delta)} \notag \\ 
& \leq \frac{1}{4}\min_{j} D_j \|\nabla \psi\|_{L^2(\Omega)}^2+  \frac{16 c_\Delta^2 L^{2}}{\delta}\frac{ (\max_j D_j )^2}{\min_{j} D_j }.
\label{2-7}
\end{align}
Substituting \eqref{2-6}, \eqref{2-7} into \eqref{2-4} and choosing $\delta$ satisfying \eqref{delta_2},
we get  
\begin{equation}\label{2-8}
 \frac{\dd}{\dd t} \|\psi\|_{L^2(\Omega)}^2  +   \|\sqrt{D}\nabla\psi\|_{L^2(\Omega)}^2 \leq  \frac{32 c_\Delta^2 L^2}{\delta}\frac{ (\max_j D_j )^2}{\min_{j} D_j }=:M_1.
\end{equation}  
Due to Lemma \ref{lemmaA5}, we also infer from \eqref{2-8} that 
\begin{equation}\nonumber
 \frac{\dd}{\dd t} \|\psi\|_{L^2(\Omega)}^2  + \frac{\min_{j}D_j}{H^2} \|\psi\|_{L^2(\Omega)}^2 \leq M_1.
\end{equation}  
Applying the classical Gronwall lemma, we obtain the estimate \eqref{2-2}. In particular, it follows from \eqref{2-2} that
\begin{equation}\label{2-10}
\|\psi(t)\|_{L^2(\Omega)}^2 \leq \frac{M_1H^2}{\min_{j}D_j}+1,\quad \forall\,t \ge T_1=\frac{H^2}{\min_{j}D_j}\ln  (\|\psi_0\|_{L^2(\Omega)}^2+1).
\end{equation}
In addition, for $t\ge T_1$, we integrate the energy inequality \eqref{2-8} over $[t,t+1]$ to obtain
\begin{equation}\label{2-11}
\|\psi(t+1) \|_{L^2(\Omega)}^2+  \int_t^{t+1} \|\sqrt{D}\nabla\psi(s) \|_{L^2(\Omega)}^2 \dd s \leq \|\psi(t) \|_{L^2(\Omega)}^2 + M_1 \leq \frac{M_1H^2}{\min_{j}D_j}+1+M_1.
\end{equation}
Combining \eqref{2-10} and \eqref{2-11}, we arrive at the conclusion \eqref{2-3}.
\end{proof}

The next lemma gives a uniform estimate of $\psi$ in $\mathcal{V}$ for large time, which implies that the trajectory $\psi(t)$ is uniformly compact in $\mathcal{H}^s$ with $s\in(\frac12,1)$. 
\begin{lemma}\label{prop3}
Let $s\in (\frac12,1)$ and $\psi_0\in \mathcal{H}^s$. We take $r=\frac{6}{3-2s}$ and  
\begin{equation}\label{def-delta}
\delta = \min\{\delta_1,\delta_2\},
\end{equation}
where $\delta_1$ and $\delta_2$ are given in \eqref{delta_1} and \eqref{delta_2}, respectively. 
For any $\tau\in (0,1)$, the solution $\psi$ of problem \eqref{Sharps}--\eqref{bc} satisfies $\psi\in L^\infty(\tau, \infty;\mathcal{V})$. Moreover, for any $t\ge \max\{ T_0,T_1\}+1$, where $T_0$, $T_1$ are given in Lemma \ref{L^r-estimate}, Lemma \ref{prop2}, respectively, 
it holds 
\begin{align}
\|\sqrt{D}\nabla \psi(t)\|_{L^2(\Omega)}^2  
&\leq \left(M_2+ \frac{M_1H^2}{\min_{j}D_j}+1+M_1\right) \operatorname{exp}\left[M_3+M_4 \left( \frac{rM_0 H^2}{2(r-1)\min_{j}D_j}+1\right)^\frac{2}{r-3}\right] 
\notag \\
& =:M_5. \label{2-14}
\end{align}
with
\begin{equation}\label{M2M3}
  M_2= 32 c_\Delta^2 L^2 \delta^{-3} (\max_j D_j )^2,\quad M_3 = \frac{32(\max_{j} K_j)^4 c_\Delta^2  }{(\min_{j} K_j)^2\min_{j} D_j},
\end{equation}
and 
\begin{equation}\label{M4}
M_4 = \frac{2 C_s\big[(1+C_p) C_2 C_u^\frac{3}{r}\max_j K_j\big]^\frac{2r}{r-3}}{\min_j D_j}.
\end{equation} 
\end{lemma}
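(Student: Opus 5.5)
We test \eqref{Sharps}$_3$ with $\mathcal{L}\psi=-\nabla\cdot(D\nabla\psi)$, which is admissible since $\psi\in L^2(\tau,T;\mathcal{H}^{1+s})$ and, by parabolic smoothing on $(0,\tau)$, $\psi\in L^2_{\rm loc}((0,\infty);\mathcal{W})$. Integrating by parts gives
\begin{equation}\nonumber
\frac12\frac{\dd}{\dd t}\|\sqrt{D}\nabla\psi\|_{L^2(\Omega)}^2+\|\mathcal{L}\psi\|_{L^2(\Omega)}^2
=(\Bu\cdot\nabla\psi,\nabla\cdot(D\nabla\psi))+(\phi_b'u_z,\nabla\cdot(D\nabla\psi))-(D\phi_b'',\nabla\cdot(D\nabla\psi)).
\end{equation}
We estimate the three terms so as to obtain a differential inequality
\begin{equation}\nonumber
y'\le g\,y+h,\qquad y=\|\sqrt{D}\nabla\psi\|_{L^2(\Omega)}^2,
\end{equation}
where $g$ involves $\|\psi\|_{L^r}$ and $h$ is a constant, and then apply the uniform Gronwall Lemma \ref{lemmaUGL} on intervals of length one.

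\textbf{The two linear terms.} For the forcing term, Cauchy--Schwarz and \eqref{phi_b2} give $\|D\phi_b''\|_{L^2}^2\le 4c_\Delta^2\delta^{-4}(\max_jD_j)^2|\Omega_\delta|$. Since $|\Omega_\delta|=2L^2\delta$, this is $8c_\Delta^2L^2\delta^{-3}(\max D_j)^2$, and Young's inequality then produces the constant $M_2$. For the $\phi_b'u_z$ term we bound it by $c_\Delta\delta^{-1}\|u_z\|_{L^2(\Omega_\delta)}\|\mathcal{L}\psi\|_{L^2}$. Applying the Poincaré bound \eqref{2-3-1} to $u_z$ and the incompressibility argument of \eqref{2-4}--\eqref{2-6}, we get $\|u_z\|_{L^2(\Omega_\delta)}\le 2\delta\cdot 2(\max K)^2(\min K)^{-1}\|\nabla\psi\|_{L^2}$. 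Young's inequality then yields $\tfrac14\|\mathcal{L}\psi\|^2+M_3\min_jD_j\,\|\nabla\psi\|^2$, which is consistent with $M_3$ as defined in \eqref{M2M3}.

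\textbf{The nonlinear term (main obstacle).} Here we may use only the $L^r$ bound with $r=\frac{6}{3-2s}\in(3,6)$ and the $\mathcal{W}$-regularity, where $\mathcal{W}\neq H^2$. We write
\begin{equation}\nonumber
|(\Bu\cdot\nabla\psi,\mathcal{L}\psi)|\le\|\Bu\|_{L^r}\|\nabla\psi\|_{L^{2r/(r-2)}}\|\mathcal{L}\psi\|_{L^2}\le C_u(1+C_p)\max_jK_j\|\psi\|_{L^r}\|\nabla\psi\|_{L^{2r/(r-2)}}\|\mathcal{L}\psi\|_{L^2},
\end{equation}
using Lemma \ref{estimate-p}. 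The key step is a Gagliardo--Nirenberg type inequality for the piecewise-regular space:
\begin{equation}\nonumber
\|\nabla\psi\|_{L^{2r/(r-2)}}\le C_2\|\nabla\psi\|_{L^2}^{1-3/r}\|\mathcal{L}\psi\|_{L^2}^{3/r}.
\end{equation}
We prove it layerwise on each $\Omega_j$, using $\|\psi\|_{H^2(\Omega_j)}\le C\|\mathcal{L}\psi\|_{L^2}$ from Lemma \ref{W-embedding} together with the standard interpolation on Lipschitz slabs; this is presumably the appendix interpolation inequality. The total power of $\|\mathcal{L}\psi\|$ is then $1+3/r<2$, since $r>3$. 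Young's inequality with exponents $\frac{2r}{r+3}$ and $\frac{2r}{r-3}$ gives
\begin{equation}\nonumber
\tfrac14\|\mathcal{L}\psi\|^2+C_s[\cdots]^{\frac{2r}{r-3}}\|\psi\|_{L^r}^{\frac{2r}{r-3}}\|\nabla\psi\|^2,
\end{equation}
which matches the exponent structure in $M_4$ once $\|\psi\|_{L^r}^{2r/(r-3)}$ is written as $(\|\psi\|_{L^r}^r)^{2/(r-3)}$.

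\textbf{Conclusion.} Altogether, with $y=\|\sqrt D\nabla\psi\|^2$ we obtain $y'\le(M_3+M_4\|\psi\|_{L^r}^{2r/(r-3)})y+M_2$. For $t\ge\max\{T_0,T_1\}$, Lemma \ref{L^r-estimate} gives \eqref{1-2}, which bounds $\int_t^{t+1}g$ by $M_3+M_4(\cdot)^{2/(r-3)}$. Lemma \ref{prop2} gives $\int_t^{t+1}y\le \frac{M_1H^2}{\min D_j}+1+M_1$. Lemma \ref{lemmaUGL} with interval length one then yields \eqref{2-14} for all $t\ge\max\{T_0,T_1\}+1$. For $\psi\in L^\infty(\tau,\infty;\mathcal{V})$, the same inequality on $[0,T]$ with $\int_0^{T}y<\infty$ (from \eqref{2-8}) gives, by the uniform Gronwall lemma with interval length $\tau$, boundedness on $[\tau,\max\{T_0,T_1\}+1]$. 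Combined with \eqref{2-14}, this proves the claim. All computations are first performed on Galerkin approximations, whose eigenfunctions lie in $\mathcal{W}$, and then passed to the limit.
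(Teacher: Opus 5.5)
Your proposal is correct and is essentially the paper's proof. Both arguments test with $\mathcal{L}\psi$ and use the same three estimates: the forcing term, the $\phi_b'u_z$ term via the Poincaré bound on $\Omega_\delta$ together with incompressibility, and the convective term via $\|\Bu\|_{L^r}\|\nabla\psi\|_{L^{2r/(r-2)}}\|\mathcal{L}\psi\|_{L^2}$, Lemma~\ref{lemmaA1} and Young's inequality with exponent $\frac{2r}{r-3}$. Both then close with the uniform Gronwall lemma fed by Lemmas~\ref{L^r-estimate} and~\ref{prop2}. Your layerwise proof of the interpolation inequality is a harmless variant of the paper's Gagliardo--Nirenberg argument applied to $(\partial_x\psi,\partial_y\psi,D\partial_z\psi)\in H^1(\Omega)$.

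The one soft spot is the justification for testing with $\mathcal{L}\psi$. The paper makes your ``parabolic smoothing'' precise by picking $\tau'\in(0,\tau)$ with $\psi(\tau')\in\mathcal{V}$ and invoking uniqueness together with Proposition~\ref{prop1}, which also yields $L^\infty(\tau',T;\mathcal{V})$ directly. Your fallback of computing on Galerkin approximations is not straightforward: the uniform-in-time $L^r$ bound you use to control $\int_t^{t+1}g$ comes from testing with $|\psi|^{r-2}\psi$, and that bound does not carry over to the projected equations.
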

\begin{proof}
It follows from Theorem \ref{H^s-estimate} that $\psi \in L^2(0,T;\mathcal{H}^{1+s}) \subset L^2(0,T;\mathcal{V})$ for any $T\geq  1$. Then for any $\tau\in (0,1)$ there exists some $\tau'\in (0,\tau)$ such that $\psi(\tau') \in \mathcal{V}$. By uniqueness, $\psi$ can be viewed as a solution on $[\tau',T]$ with the initial datum $\psi|_{t= \tau'} = \psi(\tau')$. From Proposition \ref{prop1}, we have $\psi \in L^\infty(\tau', T;\mathcal{V})$. 

In what follows, we show that $\psi \in L^\infty(\tau', \infty;\mathcal{V})$. Multiplying \eqref{Sharps}$_3$ by $\mathcal{L}\psi$ and integrating over $\Omega$, we obtain
\begin{equation}\label{2-15}
\frac12  \frac{\dd}{\dd t} \|\sqrt{D}\nabla \psi\|_{L^2(\Omega)}^2  +\|\mathcal{L}\psi\|_{L^2(\Omega)}^2  = \int_\Omega \Bu\cdot\nabla \psi \mathcal{L}\psi \dd \Bx+  \int_\Omega  \phi_b' u_z \mathcal{L}\psi \dd \Bx  - \int_\Omega D \varphi_b''\mathcal{L}\psi \dd\Bx .
\end{equation}
Using H\"older's inequality, Young's inequality and \eqref{2-3-1}, we get 
\begin{align}
\left|\int_\Omega  D \varphi_b''\mathcal{L}\psi \dd \Bx \right| 
& \leq  2c_\Delta\delta^{-2} \max_j D_j |\Omega_\delta|^\frac12 \|\mathcal{L}\psi\|_{L^2(\Omega_\delta)}  
\notag \\
& \leq \frac{1}{4}\|\mathcal{L}\psi\|_{L^2(\Omega)}^2 +16 c_\Delta^2 L^2 \delta^{-3} (\max_j D_j )^2. 
\label{2-16}
\end{align}
Similarly to \eqref{2-3-0}--\eqref{2-6}, we can deduce that 
\begin{align}
\left|\int_\Omega  \phi_b' u_z \mathcal{L}\psi \dd \Bx\right| 
& \leq c_\Delta \delta ^{-1} \|u_z\|_{L^2(\Omega)}\|\mathcal{L} \psi\|_{L^2(\Omega)} \notag \\
& \leq 2c_\Delta (\|\partial_x u_x\|_{L^2(\Omega)}+\|\partial_y u_y\|_{L^2(\Omega)})\|\mathcal{L} \psi\|_{L^2(\Omega)} \notag \\
& \leq  \frac{4(\max_{j} K_j)^2 c_\Delta  }{\min_{j} K_j} (\|\partial_x \psi\|_{L^2(\Omega)}+ \|\partial_y \psi\|_{L^2(\Omega)})\|\mathcal{L} \psi\|_{L^2(\Omega)} \notag \\
& \leq \frac{1}{4}\|\mathcal{L} \psi\|_{L^2(\Omega)}^2+\frac{16(\max_{j} K_j)^4 c_\Delta^2  }{(\min_{j} K_j)^2}\|\nabla\psi\|_{L^2(\Omega)}^2. 
\label{2-17}
\end{align}
Furthermore, using H\"older's inequality, Young's inequality, \eqref{Sharps}$_1$  and Lemmas \ref{W-embedding}, \ref{lemmaA1}, we have 
\begin{align}
&   \left|\int_\Omega  \Bu\cdot\nabla \psi \mathcal{L}\psi \dd \Bx\right| \notag \\
& \quad \leq    \|\Bu\|_{L^r(\Omega)} \|\nabla \psi\|_{L^\frac{2r}{r-2}(\Omega)}  \|\mathcal{L} \psi\|_{L^2(\Omega)} \notag \\
& \quad \leq   (1+C_p)\max_j K_j \|\psi\|_{L^r(\Omega)} \|\nabla \psi\|_{L^\frac{2r}{r-2}(\Omega)}  \|\mathcal{L} \psi\|_{L^2(\Omega)} \notag \\
& \quad \leq  (1+C_p) C_2 C_u^\frac{3}{r}\max_j K_j \|\psi\|_{L^r(\Omega)} \|\nabla \psi\|_{L^2(\Omega)} ^{1- \frac{3}{r}}  \|\mathcal{L} \psi\|_{L^2(\Omega)}^{1+\frac3r} \notag  \\ 
& \quad \leq \frac{1}{4}\|\mathcal{L} \psi\|_{L^2(\Omega)}^2+ C_s\big[(1+C_p) C_2 C_u^\frac{3}{r}\max_j K_j\big]^\frac{2r}{r-3} \|\psi\|_{L^r(\Omega)}^\frac{2r}{r-3} \|\nabla \psi\|_{L^2(\Omega)}^2.
\label{2-18}
\end{align}
where $r =\frac{6}{3-2s} $ and $C_s>0$ is a uniform constant depending only on $s$. 
Substituting the estimates \eqref{2-16}--\eqref{2-18} into \eqref{2-15}, we obtain
\begin{equation}\label{2-19}
\frac{\dd}{\dd t} \|\sqrt{D}\nabla \psi\|_{L^2(\Omega)}^2  +   \|\mathcal{L}\psi\|_{L^2(\Omega)}^2 \leq M_2 + \left(M_3+M_4\|\psi\|_{L^r(\Omega)}^\frac{2r}{r-3}\right)  \|\sqrt{D}\nabla \psi\|_{L^2(\Omega)}^2 
\end{equation} 
for any $t>0$, with constants $M_2,M_3,M_4$ given in \eqref{M2M3} and \eqref{M4}. Set
\begin{equation}\nonumber
y(t) := \|\sqrt{D}\nabla \psi\|_{L^2(\Omega)}^2, \quad g(t) := M_3+M_4\|\psi\|_{L^r(\Omega)}^\frac{2r}{r-3},\quad h(t) := M_2.
\end{equation}
Furthermore, if $t\ge \max\{T_0,T_1\}$, it follows from Lemma \ref{L^r-estimate} and Lemma \ref{prop2} that
\begin{equation}\nonumber
  \int_{t}^{t+1} y(\xi)\dd \xi   \leq \frac{M_1H^2 }{\min_{j}D_j} +1+M_1
\end{equation} 
and 
\begin{equation}\nonumber
  \int_{t}^{t+1} g(\xi)\dd \xi \leq M_3 +M_4\sup_{\xi\in[t,t+1]} \|\psi(\xi)\|_{L^r(\Omega)}^\frac{2r}{r-3} \leq M_3+ M_4  \left( \frac{rM_0 H^2}{2(r-1)\min_{j}D_j}+1\right)^\frac{2}{r-3}.
\end{equation} 
Then an application of Lemma \ref{lemmaUGL} gives
\begin{equation}\nonumber
y(t+1)  \leq  \left(M_2+  \frac{M_1H^2}{\min_{j}D_j}+1+M_1\right) \operatorname{exp}\left[M_3+ M_4  \left( \frac{rM_0 H^2}{2(r-1)\min_{j}D_j}+1\right)^\frac{2}{r-3}\right],
\end{equation}
for all $t\ge  \max\{T_0,T_1\}$. This completes the proof.
\end{proof}

\begin{proof}[Proof of Theorem \ref{attractor-sharp}]
Let $B(0,R) \subset \mathcal{H}^s$ be a closed ball centered at $0$ with radius $R>0$. Assume the initial datum $\psi_0\in B(0,R)$. By the Sobolev embedding theorem, we find that 
\[ \|\psi_0\|_{L^2(\Omega)} +  \|\psi_0\|_{L^r(\Omega)} \leq C\|\psi_0\|_{\mathcal{H}^s} < CR =:R',\quad \text{ where }r =\frac{6}{3-2s}.\]
Define the set 
\begin{equation}\label{def:B1}
  B_1 := \left\{\psi\in \mathcal{V}:~\|\psi\|_{L^2(\Omega)}^2 \leq  \frac{M_1H^2}{\min_{j}D_j}+1,~~~ \| \nabla\psi\|_{L^2(\Omega)}^2 \leq  \frac{M_5}{\min_j D_j}  \right\},
\end{equation}
where $M_1$ is given by \eqref{M1-delta} and $M_5$ is determined in \eqref{2-14}. Thanks to Lemmas \ref{prop2} and \ref{prop3}, the solution $\psi(t)=S_0(t)\psi_0$ belongs to the set $B_1$ for any $t \ge \max \{T_0(R'), T_1(R')\}+1$. Since $B_1$ is bounded in $\mathcal{V}$ and relatively compact in $\mathcal{H}^s$, it serves a compact absorbing set for the semigroup $\{S_0(t)\}_{t\geq 0}$ in the phase space $\mathcal{H}^s$. Applying the abstract result \cite[Theorem 1.1]{Temam1997}, we can conclude the existence of the global attractor $\mathscr{A}_0$ for $\{S_0(t)\}_{t\geq 0}$, which is  given by 
\begin{equation}\nonumber
\mathscr{A}_0 = \bigcap_{\xi\geq 0} \overline{\bigcup_{t\geq \xi} S_0(t)B_1}.
\end{equation}
The proof is complete.
\end{proof}

\subsection{Diffuse interface model} 
We proceed to investigate the diffuse interface model \eqref{Diffuse-s}--\eqref{bc-epsilon}. The following two lemmas provide estimates similar to those in Lemmas \ref{prop2} and \ref{prop3}. The key point is that these estimates are independent of the parameter $\varepsilon$.

\begin{lemma}\label{prop4}
Let $\psi_0^\varepsilon\in \mathcal{H}$ and the constant $\delta$ in \eqref{phi_b1} satisfy \eqref{delta_2}. Then the solution $\psi$ to problem  \eqref{Diffuse-s}--\eqref{bc-epsilon} satisfies 
\begin{equation}\label{3-2}
\|\psi^\varepsilon(t)\|_{L^2(\Omega)}^2 \leq \|\psi_0^\varepsilon\|_{L^2(\Omega)}^2 e^{-\frac{\min_{j}D_j}{H^2}t} +\frac{M_1H^2}{\min_{j}D_j}\left(1- e^{-\frac{\min_{j}D_j}{H^2}t}\right),\quad \forall\, t\geq 0,
\end{equation}
where $M_1$ is given by \eqref{M1-delta}. Moreover, if $$t\ge T_1(\|\psi_0^\varepsilon\|_{L^2(\Omega)}) :=\frac{H^2}{\min_{j}D_j}\ln (\|\psi_0^\varepsilon\|_{L^2(\Omega)}^2+1),$$ 
then we have  
\begin{equation}\label{3-3}
\|\psi^\varepsilon(t) \|_{L^2(\Omega)}^2\leq  \frac{M_1H^2}{\min_{j}D_j}+1,\quad \int_t^{t+1} \|\sqrt{D^\varepsilon}\nabla\psi^\varepsilon(\tau) \|_{L^2(\Omega)}^2 \dd \tau \leq  \frac{M_1H^2}{\min_{j}D_j}+1+M_1.
\end{equation}
\end{lemma}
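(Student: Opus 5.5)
The plan is to repeat the energy argument of Lemma \ref{prop2} for \eqref{Diffuse-s}, checking at each step that every constant depends only on $\min_j K_j$, $\max_j K_j$, $\min_j D_j$, $\max_j D_j$, $c_\Delta$, $L$, $H$, $\delta$, and never on $\varepsilon$. This works because $K^\varepsilon$ and $D^\varepsilon$ are piecewise linear interpolations between neighbouring values, so
\[
\min_j K_j\le K^\varepsilon\le \max_j K_j,\qquad \min_j D_j\le D^\varepsilon\le \max_j D_j .
\]
Testing \eqref{Diffuse-s}$_3$ by $\psi^\varepsilon$ and using $\nabla\cdot\Bu^\varepsilon=0$ together with $\Bu^\varepsilon\cdot\Be_z=0$ on $z=0,-H$, the convective term vanishes. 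This gives
\[
\frac12\frac{\dd}{\dd t}\|\psi^\varepsilon\|_{L^2(\Omega)}^2+\|\sqrt{D^\varepsilon}\nabla\psi^\varepsilon\|_{L^2(\Omega)}^2=-\int_\Omega\phi_b' u^\varepsilon_z\psi^\varepsilon\dd\Bx+\int_\Omega D^\varepsilon\phi_b''\psi^\varepsilon\dd\Bx .
\]

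For the buoyancy term, $\phi_b'$ is supported in $\Omega_\delta$. Since $\varepsilon\ll z_j-z_{j+1}$, we may take $\varepsilon$ small enough (relative to $\delta$) that $\Omega_\delta$ avoids the transition zones; in any case the argument below does not need this. We apply the Poincaré bound \eqref{2-3-1} to both $u^\varepsilon_z$ and $\psi^\varepsilon$ on $\Omega_\delta$ (both vanish at $z=0,-H$), and then use incompressibility to replace $\partial_z u_z^\varepsilon$ by $-\partial_x u_x^\varepsilon-\partial_y u_y^\varepsilon$. This is exactly as in \eqref{2-4}.

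The key point, and the only step needing care, is the horizontal derivative estimate. Since $K^\varepsilon$ depends only on $z$, differentiating the pressure problem $-\nabla\cdot(K^\varepsilon\nabla p^\varepsilon)=\nabla\cdot(K^\varepsilon\psi^\varepsilon\Be_z)$ in $x$ gives an elliptic problem for $\partial_x p^\varepsilon$ with the same structure. Testing it with $\partial_x p^\varepsilon$ yields
\[
\|\partial_x\nabla p^\varepsilon\|_{L^2(\Omega)}\le \frac{\max_j K_j}{\min_j K_j}\|\partial_x\psi^\varepsilon\|_{L^2(\Omega)}.
\]
The same holds for $y$. Combined with \eqref{Diffuse-s}$_1$, this recovers \eqref{2-6} with identical constants:
\[
\Big|\int_\Omega\phi_b' u_z^\varepsilon\psi^\varepsilon\dd\Bx\Big|\le \frac{8(\max_j K_j)^2c_\Delta\delta}{\min_j K_j}\|\nabla\psi^\varepsilon\|_{L^2(\Omega)}^2 .
\]
By \eqref{delta_2}, this is at most $\tfrac14\min_j D_j\|\nabla\psi^\varepsilon\|_{L^2(\Omega)}^2$.

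The forcing term is handled as in \eqref{2-7}, using $D^\varepsilon\le\max_j D_j$. Absorbing both bounds into the dissipation via $\|\sqrt{D^\varepsilon}\nabla\psi^\varepsilon\|^2\ge\min_j D_j\|\nabla\psi^\varepsilon\|^2$ gives the analogue of \eqref{2-8}, with the same $\varepsilon$-independent $M_1$:
\[
\frac{\dd}{\dd t}\|\psi^\varepsilon\|_{L^2(\Omega)}^2+\|\sqrt{D^\varepsilon}\nabla\psi^\varepsilon\|_{L^2(\Omega)}^2\le M_1 .
\]
Lemma \ref{lemmaA5} then gives $\|\sqrt{D^\varepsilon}\nabla\psi^\varepsilon\|^2\ge \frac{\min_j D_j}{H^2}\|\psi^\varepsilon\|^2$. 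Gronwall's lemma yields \eqref{3-2}, and evaluating at $t\ge T_1$ gives the first part of \eqref{3-3}. Integrating the differential inequality over $[t,t+1]$ gives the second part, exactly as in \eqref{2-10}--\eqref{2-11}.

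Rigorously, these computations are performed on Galerkin approximations, or directly on the weak solution since $\psi^\varepsilon\in L^2(0,T;\mathcal{V})\cap H^1(0,T;\mathcal{V}^*)$ makes $\frac{\dd}{\dd t}\|\psi^\varepsilon\|^2=2\langle\partial_t\psi^\varepsilon,\psi^\varepsilon\rangle$ valid. The resulting inequality passes to the limit by lower semicontinuity. The main obstacle is only verifying uniformity in $\varepsilon$ of the elliptic bound for $\partial_x\nabla p^\varepsilon$. This holds because that bound uses only the ellipticity ratio of $K^\varepsilon$ and never its derivative $(K^\varepsilon)'$, which blows up like $\varepsilon^{-1}$.
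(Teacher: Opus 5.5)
Your proposal is correct and follows the paper's own route. The paper proves this lemma only by remarking that it repeats the argument of Lemma \ref{prop2}, using that $K^\varepsilon$ and $D^\varepsilon$ obey the same upper and lower bounds as $K$ and $D$; you single out the one step that needs checking, namely that the bound on $\partial_x\nabla p^\varepsilon$ uses only the ellipticity ratio and the $z$-only dependence of $K^\varepsilon$, never $(K^\varepsilon)'$. (One inherited inconsistency, not your error and immaterial: the paper's own estimates \eqref{2-7}--\eqref{2-8} produce the constant $32$ in $M_1$, not the $8$ written in \eqref{M1-delta}.)
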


\begin{lemma}\label{prop6}
Let $s\in (\frac12,1)$ and $\psi_0^\varepsilon\in \mathcal{H}^s$. We take $r=\frac{6}{3-2s}$ and the constant $\delta$ in \eqref{phi_b1} satisfying  \eqref{def-delta}.   
For any $\tau\in (0,1)$, the solution $\psi^\varepsilon$ of problem \eqref{Diffuse-s}--\eqref{bc-epsilon} satisfies $\psi^\varepsilon\in L^\infty(\tau, \infty;\mathcal{V})$. Moreover, for any $t\ge \max\{ T_0,T_1\}+1$, where $T_0$, $T_1$ are given in Lemma \ref{L^r-estimate-diffuse}, Lemma \ref{prop4}, respectively, 
it holds 
\begin{equation} \label{4-14}
\begin{aligned}
\|\sqrt{D^\varepsilon} \nabla \psi(t)\|_{L^2(\Omega)}^2  
&\leq  M_5,
\end{aligned}
\end{equation}
with $M_5$ defined as in \eqref{2-14}.
\end{lemma}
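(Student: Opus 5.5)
The plan is to repeat the proof of Lemma \ref{prop3} for the diffuse model, replacing $D,K,\mathcal{L}$ by $D^\varepsilon,K^\varepsilon,\mathcal{L}^\varepsilon$. At each step we check that the constants depend only on $\min_j D_j$, $\max_j D_j$, $\min_j K_j$, $\max_j K_j$, $c_\Delta$, $L$, $H$, $\delta$, $s$. Because $K^\varepsilon$ and $D^\varepsilon$ are piecewise linear interpolations between neighbouring constants, they satisfy $\min_j D_j\le D^\varepsilon\le \max_j D_j$ and $\min_j K_j\le K^\varepsilon\le\max_j K_j$ uniformly in $\varepsilon$.

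\emph{Regularity for positive times.} Theorem \ref{prop1-1} gives $\psi^\varepsilon\in L^2(0,T;\mathcal{H}^{1+s}_\varepsilon)\subset L^2(0,T;\mathcal{V})$. Hence for any $\tau\in(0,1)$ there is $\tau'\in(0,\tau)$ with $\psi^\varepsilon(\tau')\in\mathcal{V}$. Restarting from $\tau'$ and using uniqueness together with the $\mathcal{V}$-regularity theory for the smooth-coefficient problem (the analogue of Proposition \ref{prop1}), we obtain $\psi^\varepsilon\in L^\infty(\tau',T;\mathcal{V})\cap L^2(\tau',T;\mathcal{W}^\varepsilon)$. This justifies testing \eqref{Diffuse-s}$_3$ with $\mathcal{L}^\varepsilon\psi^\varepsilon$, which yields the analogue of \eqref{2-15}.

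\emph{Term-by-term estimates.} The forcing term $D^\varepsilon\phi_b''$ is handled exactly as in \eqref{2-16}. Only $\|D^\varepsilon\|_\infty\le\max_j D_j$ enters, and $\phi_b''$ is supported in $\Omega_\delta$, which for $\varepsilon\ll\delta$ lies away from the transition layers. So the bound is again $\frac14\|\mathcal{L}^\varepsilon\psi^\varepsilon\|^2+M_2/2$.

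For the term $\phi_b'u^\varepsilon_z$, we use $\nabla\cdot\Bu^\varepsilon=0$ and the Poincaré inequality near the boundary to reduce it to $\|\partial_xu^\varepsilon_x\|+\|\partial_yu^\varepsilon_y\|$. Since $K^\varepsilon$ depends only on $z$, we differentiate the pressure equation $-\div(K^\varepsilon\nabla p^\varepsilon)=\div(K^\varepsilon\psi^\varepsilon\Be_z)$ in $x$ and $y$. The energy estimate then gives $\|\partial_x\nabla p^\varepsilon\|\le\frac{\max K_j}{\min K_j}\|\partial_x\psi^\varepsilon\|$, so \eqref{2-17} holds with the same $M_3$.

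For the nonlinear term, Lemma \ref{estimate-p} is needed for $K^\varepsilon$ with the same constant $C_p$, i.e.\ an $\varepsilon$-uniform $W^{1,r}$ estimate. I would justify this by the Dong--Li type estimate \cite{DL2021}: the coefficients depend on one variable, are measurable, and satisfy uniform ellipticity bounds, so the constant depends only on those bounds. We also need the Gagliardo--Nirenberg/embedding step $\|\nabla\psi\|_{L^{2r/(r-2)}}\le C_2C_u^{3/r}\|\nabla\psi\|^{1-3/r}\|\mathcal{L}^\varepsilon\psi\|^{1+3/r}$ with $\varepsilon$-independent constants, i.e.\ the analogues of Lemmas \ref{W-embedding} and \ref{lemmaA1} for $\mathcal{L}^\varepsilon$.

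\textbf{The main obstacle} is this uniformity. $\mathcal{W}^\varepsilon=\mathcal{V}\cap H^2$ only with $\varepsilon$-dependent constants, because $(D^\varepsilon)'\sim\varepsilon^{-1}$. So one cannot pass through the $H^2$ norm. Instead I would bound $\nabla\psi$ through $\partial_x\psi,\partial_y\psi\in H^1$ and $D^\varepsilon\partial_z\psi\in H^1$. Their $H^1$ norms are controlled by $\|\mathcal{L}^\varepsilon\psi\|$ uniformly: test with $\partial_{xx}\psi$ and $\partial_{yy}\psi$ (horizontal derivatives commute with $\mathcal{L}^\varepsilon$), then recover $\partial_z(D^\varepsilon\partial_z\psi)$ from the equation. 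Then $|\partial_z\psi|\le|D^\varepsilon\partial_z\psi|/\min_j D_j$ transfers the $L^p$ bounds, and interpolation gives the same inequality.

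\emph{Conclusion.} With these ingredients we arrive at \eqref{2-19} with identical constants $M_2,M_3,M_4$. Lemma \ref{L^r-estimate-diffuse} and Lemma \ref{prop4} provide the same integral bounds on $g$ and $y$ for $t\ge\max\{T_0,T_1\}$. Lemma \ref{lemmaUGL} then yields \eqref{4-14} with the same $M_5$.
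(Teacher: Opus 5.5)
Your proposal is correct and follows the paper's own argument. The paper only remarks that one reruns the proof of Lemma \ref{prop3} with $K^\varepsilon, D^\varepsilon, \mathcal{L}^\varepsilon$, using that these coefficients obey the same bounds as $K, D$, and replacing Lemma \ref{W-embedding} by Lemma \ref{lemma-equivalence} and \eqref{A1-2}. Those two results are proved exactly as you describe: test with $-\partial_x^2\psi$, recover $\partial_z(D^\varepsilon\partial_z\psi)$ from the equation, and transfer the bounds via $|\partial_z\psi|\le|D^\varepsilon\partial_z\psi|/\min_j D_j$. The paper also uses the $\varepsilon$-uniformity of $C_p$ that you flag, but only implicitly.

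One slip: the interpolation step should read $\|\nabla\psi\|_{L^{2r/(r-2)}(\Omega)}\le C_2C_u^{3/r}\|\nabla\psi\|_{L^2(\Omega)}^{1-3/r}\|\mathcal{L}^\varepsilon\psi\|_{L^2(\Omega)}^{3/r}$. The power $1+3/r$ appears only after multiplying by the extra factor $\|\mathcal{L}^\varepsilon\psi\|_{L^2(\Omega)}$, as in \eqref{2-18}.
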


\begin{remark}
The proof for Lemmas \ref{prop4}, \ref{prop6} 
follows the same argument as that for Lemmas \ref{prop2}, \ref{prop3}, taking advantage of the fact that $K^\varepsilon, D^\varepsilon$ have the same upper and lower bounds as $K, D$. In particular, in the derivation of the higher-order estimate \eqref{4-14}, we should use Lemma \ref{lemma-equivalence} instead of Lemma \ref{W-embedding}. The details are omitted. 
\end{remark}

Similarly to Theorem \ref{attractor-sharp}, for every $0<\varepsilon\ll 1$, we can apply Lemmas \ref{prop4}, \ref{prop6} to establish the existence of a global attractor for the semigroup $\{S_\varepsilon(t)\}_{t\geq 0}$ associated with the diffuse interface model \eqref{Diffuse-s}--\eqref{bc-epsilon}.
\begin{theorem}\label{attractor-Diffuse}
Let $s\in (\frac12,1)$. For any $0<\varepsilon\ll 1$, the semigroup $\{S_\varepsilon(t)\}_{t\geq  0}$ associated with the diffuse interface model \eqref{Diffuse-s}--\eqref{bc-epsilon} possesses a global attractor $\mathscr{A}_\varepsilon \subset \mathcal{H}^s$, which is bounded in $\mathcal{V}$, compact, and connected in $\mathcal{H}^s$. In particular, we have 
\begin{equation}\nonumber
\mathscr{A}_\varepsilon = \bigcap_{\xi\geq 0} \overline{\bigcup_{t\geq \xi} S_\varepsilon(t)B_1},
\end{equation}
where the set $B_1$ is given by \eqref{def:B1}.
\end{theorem}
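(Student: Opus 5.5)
The proof follows that of Theorem \ref{attractor-sharp}, with every constant kept independent of $\varepsilon$. Fix $R>0$ and an initial datum $\psi_0^\varepsilon$ in the closed ball $B(0,R)\subset\mathcal{H}^s$. Since $\mathcal{H}^s_\varepsilon=\mathcal{H}^s$ for $0<s<1$ with norms equivalent to $\|\cdot\|_{H^s(\Omega)}$, the Sobolev embedding with $r=\frac{6}{3-2s}$ gives
\[
\|\psi_0^\varepsilon\|_{L^2(\Omega)}+\|\psi_0^\varepsilon\|_{L^r(\Omega)}\le CR=:R'.
\]
Here $C$ does not depend on $\varepsilon$, because the $\mathcal{H}^s$-norm used to define the ball is the one built from the sharp operator $\mathcal{L}$.

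We fix $\delta$ as in \eqref{def-delta}. The relevant inputs are then:
\begin{itemize}[leftmargin=*]
\item Lemma \ref{L^r-estimate-diffuse} and Lemma \ref{prop4} give the $L^r$ and $L^2$ dissipative bounds with the $\varepsilon$-independent constants $M_0$ and $M_1$.
\item Lemma \ref{prop6} gives $\|\sqrt{D^\varepsilon}\nabla\psi^\varepsilon(t)\|_{L^2(\Omega)}^2\le M_5$ for all $t\ge \max\{T_0(R'),T_1(R')\}+1$.
\end{itemize}
Since $D^\varepsilon\ge \min_j D_j$, this implies $\|\nabla\psi^\varepsilon(t)\|_{L^2(\Omega)}^2\le M_5/\min_jD_j$. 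Hence $S_\varepsilon(t)B(0,R)\subset B_1$ for all such $t$, where $B_1$ is exactly the set \eqref{def:B1}. The entry time depends only on $R$.

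Next, $B_1$ is bounded in $\mathcal{V}=H^1_{(0)}(\Omega)$, and $\mathcal{V}\hookrightarrow\mathcal{H}^s$ compactly for $s<1$. So $B_1$ is relatively compact in $\mathcal{H}^s$, and its closure $\overline{B_1}$ is a compact absorbing set. The closure stays bounded in $\mathcal{V}$, since $B_1$ is closed in $\mathcal{H}^s$ by weak lower semicontinuity of the $H^1$-norm.

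We also need $S_\varepsilon(t)$ to be continuous on $\mathcal{H}^s$ for each $t$. This follows from the continuous dependence estimate \eqref{D-conti-Hs}, together with the local-in-time $L^\infty(0,T;\mathcal{H}^s)$ bound that comes from the diffuse analogue of Lemma \ref{Hs-es}. Applying \cite[Theorem 1.1]{Temam1997} then yields the global attractor
\[
\mathscr{A}_\varepsilon=\omega(B_1)=\bigcap_{\xi\ge0}\overline{\bigcup_{t\ge\xi}S_\varepsilon(t)B_1},
\]
which is compact in $\mathcal{H}^s$ and contained in $\overline{B_1}$, so it is bounded in $\mathcal{V}$.

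Connectedness follows from the same abstract theorem. The ball $B(0,R)\supset B_1$ is convex, hence connected, and $t\mapsto S_\varepsilon(t)\psi_0$ is continuous into $\mathcal{H}^s$ (this is the statement $\psi^\varepsilon\in C([0,T];\mathcal{H}^s)$ in Theorem \ref{prop1-1}). So the standard result that the $\omega$-limit of a connected bounded absorbing set is connected applies.

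The main obstacle is keeping the absorbing set, and the time needed to enter it, uniform in $\varepsilon$. For a fixed $\varepsilon$ this is not strictly required, but it is what allows the same $B_1$ to appear in the statement. The delicate input is Lemma \ref{prop6}: its proof uses the $\mathcal{W}^\varepsilon$–$L^{2r/(r-2)}$ interpolation through Lemma \ref{lemma-equivalence}, and one has to check that the constants $C_2$ and $C_u$ there are independent of $\varepsilon$. Here I rely on that lemma as stated. A second point to check is that $\mathcal{H}^s_\varepsilon$ and $\mathcal{H}^s$ have equivalent norms, so that "absorbing" and "compact" mean the same thing in both norms.
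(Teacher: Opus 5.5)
Your proposal is correct and follows the paper's own route. The paper proves this theorem by repeating the argument for Theorem \ref{attractor-sharp}: Lemmas \ref{L^r-estimate-diffuse}, \ref{prop4} and \ref{prop6}, with $\varepsilon$-independent constants, show that $B_1$ is absorbing, bounded in $\mathcal{V}$ and hence relatively compact in $\mathcal{H}^s$, and then \cite[Theorem 1.1]{Temam1997} applies. Your extra checks are closedness of $B_1$ in $\mathcal{H}^s$, continuity of $S_\varepsilon(t)$ via \eqref{D-conti-Hs}, and connectedness via convexity of the phase space plus time continuity; the paper leaves these implicit, and your treatment of them is sound.
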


\section{Asymptotics in the Long-time Regime}\label{sec-convergence}

In this section, we study the asymptotic behavior of global attractors $\mathscr{A}_\varepsilon$ and the stationary statistical properties of the diffuse interface model \eqref{Diffuse-s}--\eqref{bc-epsilon} as the width of the transition layer $\varepsilon\to 0$.

\subsection{Upper semi-continuity of the attractors}
First, we investigate the stability of the family of global attractors $\{\mathscr{A}_\varepsilon\}_{\varepsilon\geq 0}$ as $\varepsilon\to 0$.

\begin{theorem}\label{thm1}
Let $s\in (\frac12,1)$. 
The family of global attractors $\{\mathscr{A}_\varepsilon\}_{\varepsilon\geq 0}$ constructed in Theorems \ref{attractor-sharp}, \ref{attractor-Diffuse}
is upper-semicontinuous at $\varepsilon= 0$, that is, 
\begin{equation}\nonumber
\lim_{\varepsilon\to0} \mathrm{dist}(\mathscr{A}_{\varepsilon}, \mathscr{A}_0) =0,
\end{equation}
where $\mathrm{dist}(\mathscr{A}_{\varepsilon}, \mathscr{A}_0)$ is the Hausdorff semi-distance between $\mathscr{A}_{\varepsilon}, \mathscr{A}_0$ defined by 
\begin{equation}\nonumber
\mathrm{dist}(\mathscr{A}_{\varepsilon}, \mathscr{A}_0) := \sup_{f\in \mathscr{A}_{\varepsilon}} \inf_{g\in \mathscr{A}_0}\|f-g\|_{\mathcal{H}^s}.
\end{equation}
\end{theorem}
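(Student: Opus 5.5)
We argue by contradiction, using the standard abstract criterion for upper semicontinuity (e.g. Hale–Raugel or \cite{Temam1997}). It rests on three ingredients. First, the attractors $\mathscr{A}_\varepsilon$ all lie in the fixed set $B_1$ of \eqref{def:B1}, which is bounded in $\mathcal{V}$ uniformly in $\varepsilon$ and hence relatively compact in $\mathcal{H}^s$. This follows from Theorem \ref{attractor-Diffuse}, Lemmas \ref{prop4}, \ref{prop6}, and the $\varepsilon$-independence of $M_1$ and $M_5$. Second, we need finite-time convergence: for every fixed $t>0$, $S_\varepsilon(t)\psi_0^\varepsilon\to S_0(t)\psi_0$ in $\mathcal{H}^s$ whenever $\psi^\varepsilon_0\to\psi_0$ in $\mathcal{H}^s$ with $\psi_0^\varepsilon$ in $B_1$. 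Third, we use the invariance $S_\varepsilon(t)\mathscr{A}_\varepsilon=\mathscr{A}_\varepsilon$ and the fact that $\mathscr{A}_0$ attracts bounded sets.

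With these in hand, the argument runs as follows. Suppose $\mathrm{dist}(\mathscr{A}_{\varepsilon_n},\mathscr{A}_0)\ge \eta>0$ along some $\varepsilon_n\to0$. Choose $f_n\in\mathscr{A}_{\varepsilon_n}$ with $\dist(f_n,\mathscr{A}_0)\ge\eta/2$. Since $\mathscr{A}_0$ attracts $B_1$, there is $T$ such that $\dist(S_0(T)B_1,\mathscr{A}_0)<\eta/4$. By invariance, $f_n=S_{\varepsilon_n}(T)g_n$ for some $g_n\in\mathscr{A}_{\varepsilon_n}\subset B_1$. By compactness of $B_1$ in $\mathcal{H}^s$, a subsequence $g_n\to g$ in $\mathcal{H}^s$, and $g$ lies in the $\mathcal{V}$-closed ball $\overline{B_1}$; since $B_1$ is closed in $\mathcal{V}$-weak, $g\in B_1$. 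The finite-time convergence then gives $f_n\to S_0(T)g$, so $\dist(f_n,\mathscr{A}_0)<\eta/4$ for large $n$, a contradiction. (Alternatively, one can pass to limits of complete bounded trajectories on $\mathscr{A}_{\varepsilon_n}$ and show the limit is a complete bounded trajectory of $S_0$, hence lies in $\mathscr{A}_0$. The argument above is shorter.)

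The main obstacle is the finite-time convergence for initial data in $B_1$, uniformly on $[0,T]$. We would first derive, uniformly in $\varepsilon$, that on $[0,T]$ the solutions are bounded in $L^\infty(0,T;\mathcal{V})$ and $\psi^\varepsilon$ is bounded in $L^2(0,T;\mathcal{H}^{1+s}_\varepsilon)$. We then pass to the limit by compactness rather than by a direct difference estimate, since $\mathcal{L}^\varepsilon\ne\mathcal{L}$ prevents testing the difference equation by $\mathcal{L}^s$ with a common operator. We use the bounds in $L^2(0,T;\mathcal{V})\cap L^\infty(0,T;\mathcal{V})$ together with $\partial_t\psi^\varepsilon$ bounded in $L^2(0,T;\mathcal{V}^*)$ (uniformly, by comparison). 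Aubin–Lions then gives strong convergence in $C([0,T];\mathcal{H}^{s'})$ for $s'<1$, in particular $s'=s$. For the coefficients, $K^\varepsilon\to K$ and $D^\varepsilon\to D$ boundedly a.e., hence strongly in every $L^q$, $q<\infty$. This lets us pass to the limit in $(D^\varepsilon\nabla\psi^\varepsilon,\nabla\varphi)$ and in the pressure problem: $\nabla p^\varepsilon\to\nabla p$ in $L^2$, by testing the difference of the elliptic problems and using uniform ellipticity together with dominated convergence on $(K^\varepsilon-K)$. The nonlinearity $\Bu^\varepsilon\cdot\nabla\psi^\varepsilon$ converges weakly because $\Bu^\varepsilon\to\Bu$ strongly in $L^2$ and $\nabla\psi^\varepsilon$ converges weakly. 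The limit is therefore a weak solution of the sharp model with datum $g\in\mathcal{H}^s$. By the uniqueness in Theorem \ref{H^s-estimate}, it equals $S_0(t)g$, and the whole sequence converges. This yields $S_{\varepsilon_n}(T)g_n\to S_0(T)g$ in $\mathcal{H}^s$.

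The delicate point in this step is obtaining the uniform $L^\infty(0,T;\mathcal{V})$ bound, starting from data merely bounded in $\mathcal{V}$, via \eqref{2-19}-type estimates with $\mathcal{L}^\varepsilon$. This requires the $\varepsilon$-uniform equivalence of Lemma \ref{lemma-equivalence}. We also need the identification $\mathcal{H}^s_\varepsilon=\mathcal{H}^s$ with $\varepsilon$-uniform constants, so that strong $H^s$ convergence is the same as $\mathcal{H}^s$ convergence.
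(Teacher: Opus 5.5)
Your argument is correct, but it obtains the key finite-time step by a different route from the paper.

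The paper never extracts a convergent subsequence of the preimages $b_j$ (your $g_n$) and never passes to the limit in the equations. It compares $S_{\varepsilon_j}(T_*)b_j$ with $S_0(T_*)b_j$, both issued from the \emph{same} datum $b_j\in B_1$, using the quantitative stability estimate \eqref{diff-conti} of Lemma~\ref{prop-error}. It then interpolates this against the $\varepsilon$-uniform bound $\|\nabla\psi^\varepsilon(t)\|_{L^2(\Omega)}^2\le M_6$ of Lemma~\ref{prop3-4}, which gives $\|a_j-S_0(T_*)b_j\|_{\mathcal{H}^s}\lesssim \varepsilon_j^{(1-s)/12}$.

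The obstruction you cite (no common operator with which to test the difference equation by $\mathcal{L}^s$) is avoided there by working only at the $L^2$ level. When \eqref{error}$_3$ is tested by $\widetilde\psi$, the operator mismatch enters only through $\nabla\cdot(\widetilde D\nabla\psi)$. That term is controlled by $\|\widetilde D\|_{L^3(\Omega)}\lesssim\varepsilon^{1/3}$ and $\int_0^t\|\mathcal{L}\psi\|_{L^2(\Omega)}^2\,\mathrm{d}\tau\le M_7(t)$, while $\widetilde{\Bu}$ is handled through $\|\widetilde K\|_{L^{12}(\Omega)}\lesssim\varepsilon^{1/12}$.

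What each route buys:
\begin{itemize}
\item The paper's route gives an explicit rate, uniform over $B_1$ on bounded time intervals, which is exactly what is later fed into Theorem~\ref{thm2}. It needs neither closedness of $B_1$, nor continuity of $S_0(T)$ in the datum, nor identification of a limit.
\item Your route is softer. It uses only $\varepsilon$-uniform a priori bounds plus $L^q$ convergence of the coefficients, with no closed estimate for the difference system. It is purely qualitative, however, and it requires an identification step.
\end{itemize}

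That identification step needs one adjustment. Uniqueness in Theorem~\ref{H^s-estimate} comes from \eqref{S-conti-Hs}, whose derivation requires both solutions to lie in $L^2(0,T;\mathcal{H}^{1+s})$. A uniform bound in $L^2(0,T;\mathcal{H}^{1+s}_\varepsilon)$ does not pass to that space in the limit. The reason is that for $s>\frac12$ the spaces $\mathcal{H}^{1+s}_\varepsilon\subset H^{1+s}(\Omega)$ and $\mathcal{H}^{1+s}$ do not even coincide as sets: elements of the latter have $\partial_z\psi$ jumping across the interfaces.

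There are two ways to close the step:
\begin{itemize}
\item Use instead $\int_0^T\|\mathcal{L}^\varepsilon\psi^\varepsilon\|_{L^2(\Omega)}^2\,\mathrm{d}t\le M_7(T)$ together with Lemma~\ref{lemma-equivalence}. Then $\partial_x\psi^\varepsilon$, $\partial_y\psi^\varepsilon$ and $D^\varepsilon\partial_z\psi^\varepsilon$ are bounded in $L^2(0,T;H^1(\Omega))$. The weak limit of $D^\varepsilon\partial_z\psi^\varepsilon$ is $D\partial_z\psi$ (strong convergence of $D^\varepsilon$ in $L^q$ against weak convergence of $\partial_z\psi^\varepsilon$). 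Hence the limit lies in $L^2(0,T;\mathcal{W})\subset L^2(0,T;\mathcal{H}^{1+s})$, and uniqueness applies.
\item Alternatively, since $g\in\mathcal{V}$, an $L^2$ weak--strong uniqueness estimate against the strong solution $S_0(t)g$ closes using only the $L^\infty(0,T;\mathcal{V})$ regularity of the limit.
\end{itemize}
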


Since we are concerning the limit as $\varepsilon\to 0$, we can take a sufficiently small $\varepsilon_0>0$ and focus on the case $\varepsilon\in [0,\varepsilon_0]$ in the subsequent analysis. 

\begin{lemma}\label{prop3-4}
Let $s\in (\frac12, 1)$. For $\varepsilon\in [0,\varepsilon_0]$, we have 
$$
\cup_{\varepsilon\in [0,\varepsilon_0]}\mathscr{A}_{\varepsilon}\subset B_1,
$$
where the set $B_1$ is given by \eqref{def:B1}.
Suppose that the initial datum $\psi_0^\varepsilon \in B_1$, $\varepsilon\in [0,\varepsilon_0]$. Then the solution  $\psi^\varepsilon$ to the sharp interface model \eqref{Sharps}--\eqref{bc} (for $\varepsilon=0$) or the diffuse interface model \eqref{Diffuse-s}--\eqref{bc-epsilon} (for $\varepsilon\in (0,\varepsilon_0]$) satisfies
\begin{equation}\label{ep-uniform}
\|\psi^\varepsilon(t)\|_{L^2(\Omega)}^2 \leq \frac{M_1H^2}{\min_{j}D_j}+1,\quad \|\nabla \psi^\varepsilon(t)\|_{L^2(\Omega)}^2 \leq M_6, 
\quad \forall\, t\geq 0,
\end{equation} 
where the constant $M_6\geq 1$ depends on $D_j, K_j, M_0, M_1, M_5, s, \Omega$. Moreover, it holds 
\begin{equation}\label{ep-uniform-b}
\int_0^t\|\mathcal{L}^\varepsilon \psi^\varepsilon\|_{L^2(\Omega)}^2\dd s \leq M_7(t),
\quad \forall\, t\geq 0, 
\end{equation}
with $M_7(t)>0$ determined by \eqref{3-34}.
\end{lemma}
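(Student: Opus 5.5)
The proof has three parts: the inclusion $\mathscr{A}_\varepsilon\subset B_1$, the uniform bounds \eqref{ep-uniform} for data in $B_1$, and the time-integrated bound \eqref{ep-uniform-b}.

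\textbf{Inclusion $\mathscr{A}_\varepsilon\subset B_1$.} By Theorems \ref{attractor-sharp} and \ref{attractor-Diffuse}, each $\mathscr{A}_\varepsilon$ is the $\omega$-limit set of $B_1$. The set $B_1$ is closed in $\mathcal{H}^s$: it is bounded in $\mathcal{V}$, and the gradient bound is weakly lower semicontinuous. The constants defining $B_1$ do not depend on $\varepsilon$. Absorption of $B_1$ into itself after the time $\max\{T_0,T_1\}+1$ (evaluated at the radius of $B_1$) follows from Lemmas \ref{prop2}, \ref{prop3}, \ref{prop4} and \ref{prop6}. Together these give $\overline{\bigcup_{t\ge\xi}S_\varepsilon(t)B_1}\subset B_1$ for $\xi$ large, and hence $\mathscr{A}_\varepsilon\subset B_1$ for every $\varepsilon\in[0,\varepsilon_0]$.

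\textbf{The $L^2$ bound in \eqref{ep-uniform}.} This comes directly from \eqref{2-2} and \eqref{3-2}. The right-hand side there is a convex combination of $\|\psi_0^\varepsilon\|_{L^2}^2$ and $M_1H^2/\min_jD_j$, and both are at most $M_1H^2/\min_jD_j+1$ when $\psi_0^\varepsilon\in B_1$.

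\textbf{The gradient bound in \eqref{ep-uniform}.} For $t\ge \max\{T_0,T_1\}+1=:T_*$, I use Lemmas \ref{prop3} and \ref{prop6}. Since $\|\psi_0^\varepsilon\|_{L^r}\le C\|\psi_0^\varepsilon\|_{\mathcal{V}}$ is bounded on $B_1$, $T_*$ is a fixed constant independent of $\varepsilon$. On $[0,T_*]$ I cannot use the uniform Gronwall lemma, so I work with the differential inequality \eqref{2-19} (and its $\varepsilon$-analogue, obtained via Lemma \ref{lemma-equivalence}) directly. Its coefficients are $M_2$ and $M_3+M_4\|\psi^\varepsilon\|_{L^r}^{2r/(r-3)}$. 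The $L^r$ norm is bounded uniformly in time by \eqref{uni-t-Lr} together with Sobolev embedding of $B_1$. The classical Gronwall inequality then gives
\[
y(t)\le \big(y(0)+M_2T_*\big)e^{C T_*},\qquad t\in[0,T_*],
\]
where $y(0)\le \max_jD_j\, M_5/\min_jD_j$. Taking $M_6$ to be the maximum of this quantity and $M_5$, divided by $\min_jD_j$ (and at least $1$), yields the gradient bound for all $t\ge0$.

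\textbf{Estimate \eqref{ep-uniform-b}.} I integrate \eqref{2-19} over $[0,t]$, with $y=\|\sqrt{D^\varepsilon}\nabla\psi^\varepsilon\|^2$ bounded by $\max_jD_j M_6$. This gives
\[
\int_0^t\|\mathcal{L}^\varepsilon\psi^\varepsilon\|_{L^2}^2\le y(0)+\Big(M_2+(M_3+M_4\Lambda)\max_jD_jM_6\Big)t=:M_7(t),
\]
where $\Lambda$ is the uniform bound on $\|\psi^\varepsilon\|_{L^r}^{2r/(r-3)}$.

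\textbf{Main obstacle.} The delicate point is that the constants in \eqref{2-18} must be independent of $\varepsilon$. This requires the interpolation estimate $\|\nabla\psi\|_{L^{2r/(r-2)}}\le C\|\nabla\psi\|^{1-3/r}\|\mathcal{L}^\varepsilon\psi\|^{3/r}$ with $C$ uniform in $\varepsilon$. I would take this from Lemma \ref{lemma-equivalence}, which gives $\varepsilon$-uniform control of $\partial_x\psi$, $\partial_y\psi$ and $D^\varepsilon\partial_z\psi$ in $H^1$ by $\|\mathcal{L}^\varepsilon\psi\|$. I would then apply Gagliardo–Nirenberg to these quantities, noting that $\partial_z\psi=(D^\varepsilon\partial_z\psi)/D^\varepsilon$ with $D^\varepsilon$ bounded above and below.
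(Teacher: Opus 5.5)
Your proposal is correct and follows essentially the same route as the paper. The four steps match: self-absorption of $B_1$ gives the inclusion, \eqref{2-2}/\eqref{3-2} gives the $L^2$ bound, and the gradient bound comes from a uniform $L^r$ bound with classical Gronwall on $[0,\max\{T_0,T_1\}+1]$ and Lemmas \ref{prop3}/\ref{prop6} afterwards. Time integration of \eqref{2-19} then gives \eqref{ep-uniform-b}. The only cosmetic difference is that the paper bounds $\int_0^t\|\sqrt{D}\nabla\psi\|^2$ via the integrated energy inequality \eqref{2-31} rather than by $t\max_jD_j M_6$; this changes the form of your $M_7(t)$ relative to \eqref{3-34}, which is immaterial.
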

\begin{proof}
We recall that $B_1\subset \mathcal{V}$ (see \eqref{def:B1}) serves as a compact absorbing set for $\{S_\varepsilon(t)\}_{t\geq 0}$, $\varepsilon\in[0,\varepsilon_0]$. By the definition of the global attractor, we have $\mathscr{A}_{\varepsilon}\subset B_1$ for every $\varepsilon\in[0,\varepsilon_0]$, so that the family of global attractors $\{\mathscr{A}_{\varepsilon}\}_{\varepsilon\in [0,\varepsilon_0]}$ is uniformly bounded in $H^1(\Omega)$.


Concerning the estimate \eqref{ep-uniform}, we only deal with the case $\varepsilon=0$, the case $\varepsilon>0$ can be treated in a similar manner with the same bounds. First, by \eqref{2-2}, we have 
\begin{equation}\nonumber
\begin{aligned}
\|\psi(t)\|_{L^2(\Omega)}^2 & \leq   \left(\frac{M_1H^2}{\min_{j}D_j}+1\right) e^{-\frac{\min_{j}D_j}{H^2}t} +\frac{M_1H^2}{\min_{j}D_j}\left(1- e^{-\frac{\min_{j}D_j}{H^2}t}\right)\\
\leq &~\frac{M_1H^2}{\min_{j}D_j}+1,\quad \forall\,t\ge 0.
\end{aligned}
\end{equation}
Integrating \eqref{2-8} yields 
\begin{align}
\|\psi(t)\|_{L^2(\Omega)}^2+ \int_0^{t} \|\sqrt{D}\nabla\psi(\tau) \|_{L^2(\Omega)}^2 \dd \tau 
& \leq \|\psi_0\|_{L^2(\Omega)}^2 + M_1 t \notag \\
& \leq \frac{M_1H^2}{\min_{j}D_j}+1 + M_1 t,\quad \forall\,t\ge 0.
\label{2-31}
\end{align}
By interpolation, for $r=\frac{6}{3-2s}$, we also have  
\begin{equation}\nonumber
\|\psi_0\|_{L^r(\Omega)} \leq C_s\|\psi_0\|_{L^2(\Omega)}^{1-s}\|\sqrt{D}\nabla \psi_0\|_{L^2(\Omega)}^s  \leq C_s\left(\frac{M_1H^2}{\min_{j}D_j}+1\right)^\frac{1-s}{2} M_5^\frac{s}{2}.
\end{equation}
This, together with Lemma \ref{L^r-estimate}, gives 
\begin{equation}\label{2-32}
\begin{aligned}
\|\psi(t)\|_{L^r(\Omega)}^r
& \leq C_s\left(\frac{M_1H^2}{\min_{j}D_j}+1\right)^\frac{3(1-s)}{3-2s} M_5^\frac{3s}{ 3-2s}  +\frac{r M_0 H^2}{2(r-1)\min_{j}D_j}=: \widetilde{M}, \quad \forall\, t\geq 0.
\end{aligned}
\end{equation}
With the aid of \eqref{2-32}, the classical Gronwall lemma on a finite interval $[0,\max\{T_0,T_1\}+1]$ and the uniform Gronwall lemma on $[\max\{T_0,T_1\}+1,\infty)$, we can deduce from \eqref{2-19} that 
\begin{equation}\label{3-33}
\begin{aligned}
\|\sqrt{D}\nabla \psi(t)\|_{L^2(\Omega)}^2 \leq M_5',\quad \forall\, t\geq 0,
\end{aligned}
\end{equation}
for some $M_5'$ only depending on $D_j, K_j, M_0, M_1, M_5, s$. Then we set 
$$
M_6=\frac{M_5'}{\min_j D_j},
$$
which yields the estimate \eqref{ep-uniform}. 
Moreover, integrating \eqref{2-19} with respect to time, we infer from \eqref{2-31}, \eqref{2-32}  that 
\begin{align}
& \frac12\int_0^t \|\mathcal{L}\psi(\tau) \|_{L^2(\Omega)}^2 \dd \tau  \notag \\  
  &\quad \leq   \|\sqrt{D}\nabla \psi_0\|_{L^2(\Omega)}^2 +\int_0^t M_2 + \left(M_3+M_4 \|\psi(\tau)\|_{L^r(\Omega)}^\frac{2r}{r-3}\right)  \|\sqrt{D}\nabla \psi(\tau)\|_{L^2(\Omega)}^2\dd \tau \notag \\
  & \quad \leq M_5  +  M_2 t+\left( M_3  + M_4\widetilde{M}^\frac{2}{r-3}\right) \left(\frac{M_1H^2}{\min_{j}D_j}+1+M_1 t \right) \notag \\
  &\quad =:\frac12 M_7(t). 
  \label{3-34}
\end{align}
The proof is complete.
\end{proof}

Lemma \ref{prop3-4} implies that $\cup_{\varepsilon\in [0,\varepsilon_0]}\mathscr{A}_{\varepsilon}$ is relatively compact in $\mathcal{H}^s$. Next, we prove the continuous dependence of solutions on the parameter $\varepsilon$. Let $(\Bu,p,\psi)$ and $(\Bu^\varepsilon,p^\varepsilon,\psi^\varepsilon)$ be solutions to the sharp interface model \eqref{Sharps}--\eqref{bc} and diffuse interface model \eqref{Diffuse-s}--\eqref{bc-epsilon}, respectively. Denote the difference of the solutions 
\[\widetilde{\Bu} = \Bu-\Bu^\varepsilon,\quad \widetilde{p} = p-p^\varepsilon,\quad 
\widetilde{\psi} = \psi-\psi^\varepsilon,\]
and the coefficients 
\[\widetilde{K} := K-K^\varepsilon,\quad 
\widetilde{D} := D - D^\varepsilon.\]
Then $(\widetilde{\Bu}, \widetilde{p},\widetilde{\psi})$ satisfies the following system
\begin{equation}\label{error} 
      \left\{\begin{aligned}
        &\widetilde{\Bu}=- K ^\varepsilon\big(\nabla \widetilde{p } + \widetilde{\psi} \Be_{z}\big)- \widetilde{K} \big(\nabla p  + \psi \Be_{z}\big),\\
        &\nabla \cdot \widetilde{\Bu} =0,\\
        & \partial_t \widetilde{\psi} + \Bu^\varepsilon\cdot\nabla \widetilde{\psi}+\widetilde{\Bu}\cdot\nabla \psi  +  \phi_b'\widetilde{\Bu} \cdot \Be_z   - \nabla \cdot (D^\varepsilon\nabla\widetilde{\psi})- \nabla \cdot (\widetilde{D} \nabla \psi)=0.
      \end{aligned}\right.  
  \end{equation}  
supplemented with the initial condition 
 \begin{equation} 
   \widetilde{\psi}|_{t=0}=0,
\end{equation}
as well as the boundary conditions 
 \begin{equation} 
\widetilde{\Bu}\cdot \Be_z|_{z=0,-H}= 0,\quad  \widetilde{\psi}|_{z=0}=0, \quad \widetilde{\psi}|_{z=-H}=0,
 \end{equation} 
with periodicity in the horizontal directions $(x,y)$. 
\begin{lemma}\label{prop-error}
   Suppose that $\psi_0 = \psi_0^\varepsilon \in B_1$, where $B_1$ is defined as in \eqref{def:B1}. Then $\widetilde{\psi}$ satisfies the following estimate
  \begin{equation}\label{diff-conti}
\|\widetilde{\psi}(t)\|_{L^2(\Omega)}^2 \leq M_8(t)\varepsilon^\frac16,\quad \forall\, t\geq 0,
  \end{equation}
  where $M_8(t)>0$ depends on $K_j$ ,$D_j$, $M_0$, $M_1$, $M_5$, $\Omega$ and $t$, see \eqref{5-11}.  
\end{lemma}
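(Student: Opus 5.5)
We run an $L^2$ energy estimate on the error system \eqref{error}, testing \eqref{error}$_3$ with $\widetilde{\psi}$. Integrating by parts and using $\nabla\cdot\Bu^\varepsilon=0$ with the boundary conditions, the transport term $(\Bu^\varepsilon\cdot\nabla\widetilde\psi,\widetilde\psi)$ drops out. This gives
\begin{equation*}
\frac12\frac{\dd}{\dd t}\|\widetilde\psi\|_{L^2(\Omega)}^2+\|\sqrt{D^\varepsilon}\nabla\widetilde\psi\|_{L^2(\Omega)}^2
= -(\widetilde{\Bu}\cdot\nabla\psi,\widetilde\psi)-(\phi_b'\widetilde{u}_z,\widetilde\psi)-(\widetilde D\nabla\psi,\nabla\widetilde\psi).
\end{equation*}
The coefficient differences $\widetilde K,\widetilde D$ are bounded and supported in the union $\Sigma_\varepsilon$ of the strips $|z-z_j|<\varepsilon$, so $|\Sigma_\varepsilon|\le C\varepsilon$. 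The smallness will come from the $L^p$-norms of $\nabla\psi$ and $\psi$ on $\Sigma_\varepsilon$.

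\textbf{Velocity error.} Test the pressure equation for $\widetilde p$, namely $-\nabla\cdot(K^\varepsilon\nabla\widetilde p)=\nabla\cdot(K^\varepsilon\widetilde\psi\Be_z+\widetilde K(\nabla p+\psi\Be_z))$, with $\widetilde p$. This yields
\begin{equation*}
\|\widetilde{\Bu}\|_{L^2}\le C\|\widetilde\psi\|_{L^2}+C\|\nabla p+\psi\Be_z\|_{L^2(\Sigma_\varepsilon)}.
\end{equation*}
By H\"older and Lemma \ref{estimate-p} with a fixed $q>2$ (say $q=6$, using $\psi\in\mathcal V\subset L^6$ uniformly by Lemma \ref{prop3-4}),
\begin{equation*}
\|\nabla p+\psi\Be_z\|_{L^2(\Sigma_\varepsilon)}\le C\varepsilon^{\frac12-\frac1q}\|\psi\|_{L^q}\le C\varepsilon^{1/3}.
\end{equation*}

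\textbf{Bounding the three terms.}
\begin{itemize}
\item[(i)] The term $(\phi_b'\widetilde u_z,\widetilde\psi)$ is bounded by $C\|\widetilde{\Bu}\|_{L^2}\|\widetilde\psi\|_{L^2}\le C\|\widetilde\psi\|_{L^2}^2+C\varepsilon^{2/3}$.
\item[(ii)] For the diffusion mismatch, $|(\widetilde D\nabla\psi,\nabla\widetilde\psi)|\le \frac14\min_j D_j\|\nabla\widetilde\psi\|^2+C\|\nabla\psi\|_{L^2(\Sigma_\varepsilon)}^2$. We need smallness of $\|\nabla\psi\|_{L^2(\Sigma_\varepsilon)}$, which the $H^1$ bound alone does not give. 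Here we use the $\mathcal W$-regularity: by Lemma \ref{W-embedding} and \eqref{ep-uniform-b}, $\partial_x\psi,\partial_y\psi,D\partial_z\psi\in H^1\subset L^6$, with $\int_0^t\|\mathcal L\psi\|^2\le M_7(t)$. Since $|\nabla\psi|\le C(|\partial_x\psi|+|\partial_y\psi|+|D\partial_z\psi|)$, H\"older gives $\|\nabla\psi\|_{L^2(\Sigma_\varepsilon)}^2\le C\varepsilon^{2/3}\|\psi\|_{\mathcal W}^2$, whose time integral is at most $C\varepsilon^{2/3}M_7(t)$.
\item[(iii)] The nonlinear term is the delicate one. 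Write $|(\widetilde{\Bu}\cdot\nabla\psi,\widetilde\psi)|\le\|\widetilde{\Bu}\|_{L^2}\|\nabla\psi\|_{L^3}\|\widetilde\psi\|_{L^6}$, with $\|\nabla\psi\|_{L^3}\le C\|\psi\|_{\mathcal V}^{1/2}\|\psi\|_{\mathcal W}^{1/2}$ layerwise. Young's inequality then bounds it by
\begin{equation*}
\tfrac14\min_j D_j\|\nabla\widetilde\psi\|^2+C\|\psi\|_{\mathcal W}\big(\|\widetilde\psi\|^2+\varepsilon^{2/3}\big).
\end{equation*}
\end{itemize}

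\textbf{Closing with Gronwall.} Collecting the bounds,
\begin{equation*}
\frac{\dd}{\dd t}\|\widetilde\psi\|^2\le C(1+\|\psi\|_{\mathcal W})\|\widetilde\psi\|^2+C\varepsilon^{2/3}\big(1+\|\psi\|_{\mathcal W}+\|\psi\|_{\mathcal W}^2\big),
\end{equation*}
with $\widetilde\psi(0)=0$. Gronwall on $[0,t]$ together with $\int_0^t\|\psi\|_{\mathcal W}\le \sqrt{t\,M_7(t)}$ gives a bound of the form $M_8(t)\varepsilon^{2/3}$.

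This rate is better than the stated $\varepsilon^{1/6}$, so the stated rate is weaker than what we obtain and the claim follows as long as $\varepsilon\le1$. If some step fails, for instance if the mixed norm $\|\nabla\psi\|_{L^3}$ is unavailable, the fallback is to place $\widetilde{\Bu}$ in $L^{3}$ via $W^{1,r}$ elliptic estimates with $r$ close to $2$. That loses powers of $\varepsilon$ and would plausibly explain the authors' $\varepsilon^{1/6}$.

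\textbf{Main obstacle.} The main difficulty is getting smallness of $\nabla\psi$ on the thin strips, since $\psi$ is only piecewise $H^2$ and $\partial_z\psi$ jumps across the interfaces. We resolve it by using the flux variable $D\partial_z\psi\in H^1$ and the time-integrated bound \eqref{ep-uniform-b}.
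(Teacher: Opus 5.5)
Your proof is correct. Its skeleton matches the paper's: the $L^2$ energy estimate on the error system, thin-strip H\"older bounds, the $\mathcal W$-regularity of the sharp solution together with \eqref{ep-uniform-b} for the diffusion mismatch, and Gronwall from $\widetilde\psi(0)=0$. Your velocity bound $\|\widetilde{\Bu}\|_{L^2}\le C\|\widetilde\psi\|_{L^2}+C\varepsilon^{1/3}$ is the paper's \eqref{5-6}, and your terms (i)--(ii) are handled exactly as in \eqref{5-9}--\eqref{5-10}.

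The genuine difference is the nonlinear term $(\widetilde{\Bu}\cdot\nabla\psi,\widetilde\psi)$. The paper moves the derivative onto $\widetilde\psi$ and bounds the term by $\|\nabla\widetilde\psi\|_{L^2}\|\psi\|_{L^4}\|\widetilde{\Bu}\|_{L^4}$. This needs a $W^{1,4}$ estimate for the $K^\varepsilon$-pressure problem, uniform in $\varepsilon$. The source term then costs $\|\widetilde K\|_{L^{12}}\sim\varepsilon^{1/12}$, and this is exactly where the $\varepsilon^{1/6}$ comes from. In exchange, the Gronwall coefficient involves only the uniform $H^1$ bound, giving the factor $e^{CM_6^4t}$ in \eqref{5-11}.

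You keep $\widetilde{\Bu}$ in $L^2$, which needs only the energy estimate for $\widetilde p$ and no $W^{1,r}$ theory for $K^\varepsilon$. You then place $\nabla\psi$ in $L^3$ via \eqref{A1-1}. The price is the factor $\|\psi\|_{\mathcal W}$ in the Gronwall coefficient. This is harmless on $[0,t]$, since it lies in $L^2(0,t)$ by \eqref{ep-uniform-b} and Lemma \ref{W-embedding}. The resulting constant still grows exponentially in $t$, because $M_7(t)$ is linear in $t$.

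Your route gives the sharper rate $\varepsilon^{2/3}$, which implies the stated $\varepsilon^{1/6}$ for $\varepsilon\le1$. For the later use in Theorem \ref{thm1}, only $M_8(T_*)\varepsilon^{\alpha}\to0$ at fixed $T_*$ matters, so either version suffices. Your fallback remark is not needed.
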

\begin{proof}
We observe that the differences of the coefficients $\widetilde{K}, \widetilde{D}$ are piecewise linear functions supported in a narrow layer of width $2\varepsilon$. Thus, we have 
\begin{equation}\label{5-1}
\|\widetilde{K}\|_{L^r(\Omega)} \leq C  \varepsilon^{\frac{1}{r}}, \quad \|\widetilde{D}\|_{L^r(\Omega)} \leq C \varepsilon^{\frac{1}{r}},\quad \forall\, r\in [2,\infty).
\end{equation}
Since $\widetilde{p}$ is determined by the elliptic problem
  \begin{equation}\label{error-pressure}
  \left\{\begin{aligned}
&-\nabla \cdot (K^\varepsilon\nabla {\widetilde{p}}) = \nabla \cdot  \big[K^\varepsilon\widetilde{\psi}\Be_{z} +\widetilde{K}(\nabla p+\psi \Be_z)\big], \quad \text{ in } \Omega,\\
&\partial_z \widetilde{p}(x,-H)  = \partial_z \widetilde{p}(x,0)  = 0,
  \end{aligned}\right. 
\end{equation}
with periodicity in the horizontal directions $(x,y)$, it follows from Lemma \ref{estimate-p} and \eqref{5-1} that 
\begin{equation}\nonumber
\begin{aligned}
\|\nabla \widetilde{p}\|_{L^4(\Omega)} 
& \leq  C (\|K^\varepsilon\widetilde{\psi}\Be_{z} \|_{L^4(\Omega)} +\|\widetilde{K}\nabla p \|_{L^4(\Omega)} +\|\widetilde{K} \psi\|_{L^4(\Omega)})\\
& \leq C\big(\|K^\varepsilon\|_{L^\infty(\Omega)}\|  \widetilde{\psi}\|_{L^4(\Omega)} +\|\widetilde{K}\|_{L^{12}(\Omega)}(\|\nabla p\|_{L^6(\Omega)} +\|\psi\|_{L^6(\Omega)})\big)\\
& \leq C \|\widetilde{\psi}\|_{L^4(\Omega)} 
+ C \varepsilon^\frac{1}{12} \|\psi\|_{L^6(\Omega)}.
\end{aligned}
\end{equation}
Thanks to the Sobolev embedding theorem and Lemma \ref{prop3-4}, we get
\begin{equation}\label{5-5a}
\|\nabla \widetilde{p}\|_{L^4(\Omega)} 
\leq C \|\widetilde{\psi}\|_{L^4(\Omega)}+ C \varepsilon^\frac{1}{12} \|\nabla\psi\|_{L^2(\Omega)} 
\leq C \|\widetilde{\psi}\|_{L^4(\Omega)} + CM_6^\frac12 \varepsilon^\frac{1}{12}.
\end{equation}
Consequently, it holds 
\begin{equation}\label{5-5}
\begin{aligned}
\|\widetilde{\Bu}\|_{L^4(\Omega)} 
& \leq \|K ^\varepsilon \|_{L^\infty(\Omega)}(\|\nabla \widetilde{p }\|_{L^4(\Omega)} 
+ \|\widetilde{\psi}\|_{L^4(\Omega)} )  
+ \|\widetilde{K}\|_{L^{12}(\Omega)} (\|\nabla p\|_{L^6(\Omega)}  + \|\psi\|_{L^6(\Omega)})\\
& \leq C \big(\| \widetilde{\psi}\|_{L^4(\Omega)} 
+ M_6^\frac12  \varepsilon^\frac{1}{12}  \big)  
+ C \varepsilon^\frac{1}{12} \|\psi\|_{L^{6}(\Omega)}\\
& \leq C \big(\| \widetilde{\psi}\|_{L^4(\Omega)} 
+ M_6^\frac12  \varepsilon^\frac{1}{12}  \big)  
+ C \varepsilon^\frac{1}{12} \|\nabla\psi\|_{L^2(\Omega)}\\
& \leq C \big(\| \widetilde{\psi}\|_{L^4(\Omega)} 
+ M_6^\frac12  \varepsilon^\frac{1}{12} \big).
\end{aligned}
\end{equation}
In a similar way, we can get
\begin{equation}\label{5-6}
\begin{aligned}
\|\nabla \widetilde{p}\|_{L^2(\Omega)} + \|\widetilde{\Bu}\|_{L^2(\Omega)} 
\leq C \|\widetilde{\psi}\|_{L^2(\Omega)}+ CM_6^\frac12 \varepsilon^\frac{1}{3}.
\end{aligned}
\end{equation}
Next, testing equation \eqref{error}$_3$ by $\widetilde{\psi}$ and using integration by parts, we obtain 
\begin{equation}\label{5-7}
\frac{1}{2}\frac{\dd }{\dd t} \|\widetilde{\psi}\|_{L^2(\Omega)}^2 + \|\sqrt{D^\varepsilon} \nabla \widetilde{\psi}\|_{L^2(\Omega)}^2 
+\int_\Omega \big[ \widetilde{\Bu}\cdot\nabla \psi  +  \phi_b'\widetilde{\Bu} \cdot \Be_z    
- \nabla \cdot (\widetilde{D} \nabla \psi)\big]\widetilde{\psi} \dd \Bx=0.
\end{equation}
With the aid of \eqref{5-5a}--\eqref{5-6}, Lemmas \ref{W-embedding}--\ref{lemmaA1} and Young's inequality, we can deduce that 
\begin{align}
\left|\int_\Omega  ( \widetilde{\Bu}\cdot\nabla \psi) \widetilde{\psi}\dd \Bx\right| 
& = \left|\int_\Omega (\widetilde{\Bu}\cdot\nabla  \widetilde{\psi} )\psi \dd \Bx\right| 
\leq  \|\nabla  \widetilde{\psi}\|_{L^2(\Omega)}\|\psi\|_{L^4(\Omega)}  \|\widetilde{\Bu}\|_{L^4(\Omega)} \notag \\ 
& \leq \frac{\min_j D_j}{8} \|\nabla  \widetilde{\psi}\|_{L^2(\Omega)}^2 
+  C \|\psi\|_{L^4(\Omega)}^2  \|\widetilde{\Bu}\|_{L^4(\Omega)}^2 \notag \\
& \leq \frac{\min_j D_j}{8} \|\nabla  \widetilde{\psi}\|_{L^2(\Omega)}^2 
+  C \|\nabla\psi\|_{L^2(\Omega)}^2 (\| \widetilde{\psi}\|_{L^4(\Omega)}^2 + M_6  \varepsilon^\frac16 ) \notag \\
& \leq \frac{\min_j D_j}{8} \|\nabla  \widetilde{\psi}\|_{L^2(\Omega)}^2 
+ C M_6 (\| \widetilde{\psi}\|_{L^4(\Omega)}^2 
+ M_6  \varepsilon^\frac16 ) \notag \\
& \leq \frac{\min_j D_j}{8} \|\nabla  \widetilde{\psi}\|_{L^2(\Omega)}^2 
+ C M_6 \| \widetilde{\psi}\|_{L^2(\Omega)} ^\frac12\| \nabla\widetilde{\psi}\|_{L^2(\Omega)}^\frac32 
+ CM_6^2  \varepsilon^\frac16 \notag  \\
& \leq \frac{\min_j D_j}{4} \|\nabla  \widetilde{\psi}\|_{L^2(\Omega)}^2 
+ C M_6^4 \| \widetilde{\psi}\|_{L^2(\Omega)}^2 
+ CM_6^2  \varepsilon^\frac16, 
\label{5-8}
\end{align}
\begin{align}
\left|\int_\Omega  \phi_b'\widetilde{\Bu} \cdot \Be_z \widetilde{\psi} \dd \Bx\right| 
& \leq c_\Delta\delta^{-1} \|\widetilde{\Bu}\|_{L^2(\Omega)}\|\widetilde{\psi}\|_{L^2(\Omega)}\notag \\
& \leq C(\| \widetilde{\psi}\|_{L^2(\Omega)} 
+ M_6^\frac12  \varepsilon^\frac13)\| \widetilde{\psi}\|_{L^2(\Omega)} \notag \\
& \leq C \| \widetilde{\psi}\|_{L^2(\Omega)}^2 
+ CM_6  \varepsilon^\frac23 
\label{5-9}
\end{align}
and
\begin{align}
\left|\int_\Omega - \nabla \cdot (\widetilde{D} \nabla \psi )\widetilde{\psi} \dd \Bx \right|
& = \left|\int_\Omega  \widetilde{D} \nabla \psi \cdot \nabla\widetilde{\psi}\dd \Bx \right| \notag \\
& \leq \frac{\min_jD_j}{4} \|\nabla  \widetilde{\psi}\|_{L^2(\Omega)}^2 
+ C \|\widetilde{D}\|_{L^3(\Omega)}^2 \|\nabla\psi\|_{L^6(\Omega)}^2 \notag \\
& \leq \frac{\min_jD_j}{4} \|\nabla  \widetilde{\psi}\|_{L^2(\Omega)}^2 
+ C \varepsilon^\frac23 \|\psi\|_{\mathcal{W}}^2 \notag \\
& \leq \frac{\min_jD_j}{4} \|\nabla  \widetilde{\psi}\|_{L^2(\Omega)}^2 
+ C \varepsilon^\frac23 \|\L\psi\|_{L^2(\Omega)}^2.
\label{5-10}
\end{align}
Substituting \eqref{5-8}--\eqref{5-10} into \eqref{5-7} yields  
\begin{equation} 
\frac{\dd }{\dd t} \|\widetilde{\psi}\|_{L^2(\Omega)}^2 
+ \min_j D_j \|\nabla \widetilde{\psi}\|_{L^2(\Omega)}^2 
\leq CM_6^4 \|\widetilde{\psi}\|_{L^2(\Omega)}^2 
+ CM_6^2\varepsilon^\frac16 
+ C\varepsilon^\frac23\|\L\psi\|_{L^2(\Omega)}^2,
\end{equation}
where we have used the facts $M_6\geq 1$ and $\varepsilon\in (0,1)$. 
Since $\widetilde{\psi}(0)=0$, we infer from Gronwall's lemma and Lemma \ref{prop3-4} that 
\begin{align}
\|\widetilde{\psi}(t)\|_{L^2(\Omega)}^2 
& \leq \int_0^t \big(CM_6^2\varepsilon^\frac16+ C\varepsilon^\frac23\|\L\psi(\tau )\|_{L^2(\Omega)}^2  \big) e^{\int_\tau ^t CM_6^4\dd \xi} \dd \tau \notag \\
& \leq C e^{CM_6^4t} \left( M_6^2  \varepsilon^\frac16 t 
+ \varepsilon^\frac23 M_7(t)\right) \notag \\
& \leq C e^{CM_6^4 t} \left( M_6^2 t  + M_7(t)   \right)\varepsilon^\frac16 =:M_8(t) \varepsilon^\frac16, \quad \forall\, t\geq 0. \label{5-11}
\end{align}
The proof is complete. 
\end{proof}

Now we are in a position to prove Theorem \ref{thm1}.  

\begin{proof}[Proof of Theorem \ref{thm1}] 
We assume by contradiction that the assertion is false. Then there exists a constant $c >0$ and a decreasing sequence $\{\varepsilon_j\}_{j=1}^\infty\subset (0,\varepsilon_0]$ converging to $0$ such that
\[ \mathrm{dist}(\mathscr{A}_{\varepsilon_j},\mathscr{A}_0) \geq c ,\quad \forall\, j \geq 1.
\]
Since $\mathscr{A}_{\varepsilon_j}$ is compact in $\mathcal{H}^s$, there exists an $a_j \in \mathscr{A}_{\varepsilon_j}$ such that
\[\dist (a_j,\mathscr{A}_0) =\dist (\mathscr{A}_{\varepsilon_j},\mathscr{A}_0) \geq c.\]
On the other hand, since $\mathscr{A}_0$ attracts all the bounded sets in $\mathcal{H}^s$, we can choose $T_*=T_*(B_1) > 0$ sufficiently large such that
\[\dist (S_0(T_*)B_1 ,\mathscr{A}_0)<\frac{c}{4}.\]
The global attractor $\mathscr{A}_\varepsilon$ is invariant with respect to $S_\varepsilon(t)$ for any $t\ge 0$, i.e., $S_\varepsilon (t) \mathscr{A}_\varepsilon= \mathscr{A}_\varepsilon$. Hence, for each $a_j \in \mathscr{A}_{\varepsilon_j}$, there exists $b_j \in \mathscr{A}_{\varepsilon_j}$ such that $S_{\varepsilon_j}(T_*)b_j= a_j$ for all $j\in \mathbb{Z}^+$. This implies that $b_j\in \mathscr{A}_{\varepsilon_j} \subset B_1$. By Lemmas \ref{prop3-4} and \ref{prop-error}, we have 
\begin{align}
 \|a_j -S_0(T_*)b_j\|_{\mathcal{H}^s} 
& \leq C_s\|a_j -S_0(T_*)b_j\|_{L^2(\Omega)}^{1-s}  \|\nabla(a_j -S_0(T_*)b_j )\|_{L^2(\Omega)}^s \notag \\
& \leq C_s\big[M_8(T_*)\varepsilon_j^\frac16\big]^\frac{1-s}{2}\big(\|\nabla (S_{\varepsilon_j}(T_*)b_j)\|_{L^2}+\|\nabla (S_0(T_*)b_j)\|_{L^2(\Omega)}\big)^{s} \notag \\
& \leq 2C_s M_6^\frac{s}{2}\big[M_8(T_*)\big]^\frac{1-s}{2} \varepsilon_j^\frac{1-s}{12} <\frac{c}{4}, \notag 
\end{align}
provided that $j$ is sufficiently large. Hence, it holds 
\begin{align*} 
  \dist(a_j, \mathscr{A}_0) 
  & \leq  \|a_j -S_0(T_*)b_j\|_{\mathcal{H}^s} 
  + \dist(S_0(T_*)b_j, \mathscr{A}_0) \\
  & \leq  \|a_j -S_0(T_*)b_j\|_{\mathcal{H}^s} + \dist(S_0(T_*)B_1, \mathscr{A}_0) 
  \leq \frac{c}{2},
\end{align*}
which leads to a contradiction. The proof is complete. 
\end{proof}

\subsection{Upper semi-continuity of invariant measures}
Now we study the stationary statistical properties of the diffuse interface model \eqref{Diffuse-s}--\eqref{bc-epsilon} in terms of invariant measures (see \cite{Wang2008b}). To this end, we recall the following definition:
\begin{definition} Consider an abstract continuous dynamical system $\{S(t)\}_{t\geq 0}$ on a Hilbert space $X$. A Borel measure $\mu$ is called an invariant measure of $\{S(t)\}_{t\geq 0}$, if
\[
\mu(E)=\mu(S^{-1}(t)E), 
\]
for any $t\ge 0$ and Borel measurable set $E$. 
\end{definition}
Since invariant measures are supported in the global attractors \cite{Wang2009}, the upper semi-continuity of the global attractors established in Theorem \ref{thm1} indicates that the statistical properties of the diffuse interface model \eqref{Diffuse-s}--\eqref{bc-epsilon} may be close to those of the sharp interface model \eqref{Sharps}--\eqref{bc}. 

For any $0<\varepsilon \ll 1$, we denote by $\mathcal{IM}_\varepsilon$ the set of all invariant measures of the dynamical system $(S_\varepsilon(t), \mathcal{H}^{s})$ corresponding to the sharp interface model ($\varepsilon=0$) and the diffuse interface model ($\varepsilon>0$). 
%
%
Lemma \ref{prop3-4} implies that the semigroups $\{S_\varepsilon(t)\}_{t\geq 0}$, $\varepsilon\in (0,\varepsilon_0]$ are uniformly dissipative. On the other hand, by Lemma \ref{prop-error}, the uniform estimate \eqref{ep-uniform} and the interpolation inequality, we find 
\begin{equation}\notag 
\|\widetilde{\psi}(t)\|_{\mathcal{H}^s} \leq C M_6^\frac{s}{2}\big[M_8(t)\big]^\frac{1-s}{2} \varepsilon^\frac{1-s}{12},\quad \forall\, t\geq 0.
  \end{equation}
This implies that the trajectories of $S_\varepsilon$ converge to those of $S_0$ in any time interval $[0,T]$ uniformly on the attractors $\mathscr{A}_\varepsilon$. Hence, applying the abstract result \cite[Theorem 4]{Wang2009}, we can conclude 
\begin{theorem}\label{thm2}
Let $s\in (\frac12,1)$. The sets of invariant measures $\{\mathcal{IM}_\varepsilon\}_{\varepsilon\in [0,\varepsilon_0]}$ are upper semi-continuous in the sense that for any sequence $\{\mu_\varepsilon \in \mathcal{IM}_\varepsilon,\,\varepsilon\in (0,\varepsilon_0]\}$, there exist a $\mu_0\in \mathcal{IM}_{0}$ and a subsequence (still denoted by $\{\mu_{\varepsilon}\}$) such that $\mu_\varepsilon$ converges to $\mu_0$ as $\varepsilon\to 0$ in the weak sense.
\end{theorem}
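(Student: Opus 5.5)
The plan is to verify the hypotheses of the abstract result \cite[Theorem 4]{Wang2009} on the phase space $X=\mathcal{H}^s$, and then obtain weak convergence of a subsequence by Prokhorov's theorem on a common compact set. The argument has three steps.

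\textbf{Step 1: common compact support.} Every $\mu_\varepsilon\in\mathcal{IM}_\varepsilon$ is supported in $\mathscr{A}_\varepsilon$ \cite{Wang2009}. By Lemma \ref{prop3-4}, $\mathscr{A}_\varepsilon\subset B_1$ for all $\varepsilon\in[0,\varepsilon_0]$. Since $B_1$ is bounded in $\mathcal{V}$ and the embedding $\mathcal{V}\hookrightarrow\mathcal{H}^s$ is compact for $s<1$, the closure $K:=\overline{B_1}^{\mathcal{H}^s}$ is compact. Hence $\{\mu_\varepsilon\}$ is a family of Borel probability measures on the compact metric space $K$, and it is tight. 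Prokhorov's theorem gives a subsequence $\mu_{\varepsilon_k}\rightharpoonup\mu_0$ weakly, with $\mu_0$ a probability measure supported in $K$.

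\textbf{Step 2: $\mu_0$ is invariant for $S_0$.} It suffices to show that for every bounded Lipschitz $\Phi$ on $\mathcal{H}^s$ and every fixed $t>0$,
\[
\int\Phi(S_0(t)\psi)\dd\mu_0=\int\Phi(\psi)\dd\mu_0 .
\]
Write
\[
\int\Phi(S_0(t)\psi)\dd\mu_{\varepsilon_k}-\int\Phi(\psi)\dd\mu_{\varepsilon_k}
=\int\big[\Phi(S_0(t)\psi)-\Phi(S_{\varepsilon_k}(t)\psi)\big]\dd\mu_{\varepsilon_k},
\]
which uses the invariance of $\mu_{\varepsilon_k}$. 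The right side is bounded by
\[
\mathrm{Lip}(\Phi)\sup_{\psi\in B_1}\|S_0(t)\psi-S_{\varepsilon_k}(t)\psi\|_{\mathcal{H}^s}\le C M_6^{s/2}M_8(t)^{\frac{1-s}{2}}\varepsilon_k^{\frac{1-s}{12}}\to0 .
\]
This bound comes from Lemma \ref{prop-error}, the uniform $H^1$ bound \eqref{ep-uniform}, and the interpolation between $L^2$ and $\mathcal{V}$.

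\textbf{Step 3: pass to the limit in $k$.} Since $\mu_{\varepsilon_k}\rightharpoonup\mu_0$ on $K$, we have $\int\Phi\dd\mu_{\varepsilon_k}\to\int\Phi\dd\mu_0$. For the other term, $\Phi\circ S_0(t)$ is continuous on $K$ by the continuous dependence estimate \eqref{S-conti-Hs}: on $B_1$ the exponent there is uniformly bounded by Lemma \ref{prop3-4}, so $S_0(t)$ is even Lipschitz on $K$. Hence $\int\Phi(S_0(t)\cdot)\dd\mu_{\varepsilon_k}\to\int\Phi(S_0(t)\cdot)\dd\mu_0$. Combining this with Step 2 yields the invariance identity for all bounded Lipschitz $\Phi$, hence for all bounded continuous $\Phi$. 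That identity is $\mu_0(S_0(t)^{-1}E)=\mu_0(E)$ for Borel $E$, so $\mu_0\in\mathcal{IM}_0$.

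The main obstacle is uniformity, and it enters in two places. First, the error estimate must hold uniformly over all initial data in the support $K$, not only in $B_1$; this follows by the density of $B_1$ in $K$ together with the continuity of both semigroups. Second, $S_0(t)$ must be continuous on $K$ in the $\mathcal{H}^s$ topology, which needs the bounds of Lemma \ref{prop3-4} for $\varepsilon=0$. Both points are already provided by Lemmas \ref{prop3-4} and \ref{prop-error}, so the theorem reduces to this bookkeeping and the invocation of \cite[Theorem 4]{Wang2009}.
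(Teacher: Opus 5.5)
Your proposal is correct and follows the paper's route. The paper likewise checks uniform dissipativity via Lemma \ref{prop3-4} and the finite-time convergence $\|S_\varepsilon(t)\psi-S_0(t)\psi\|_{\mathcal{H}^s}\le CM_6^{s/2}M_8(t)^{\frac{1-s}{2}}\varepsilon^{\frac{1-s}{12}}$, uniform over $\psi\in B_1$ (from Lemma \ref{prop-error} plus interpolation), and then invokes \cite[Theorem 4]{Wang2009}. You have written out the proof of that abstract result by hand (tightness on a common compact set, Prokhorov, passing to the limit in the invariance identity).

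One minor point: $B_1$ is already closed in $\mathcal{H}^s$ by weak lower semicontinuity of its $\mathcal{V}$-bounds, and every $\mu_\varepsilon$ is supported in $\bigcup_\varepsilon\mathscr{A}_\varepsilon\subset B_1$. So the density argument in your last paragraph is unnecessary.
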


\subsection{Upper semi-continuity of the Nusselt number} 
The Nusselt number defined via long time averages measures the concentration transport in the vertical direction (see \cite{doering1998jfm,Wang2008}):
\begin{equation}
\mathrm{Nu}_\varepsilon = \sup_{\psi^\varepsilon_0 \in \mathcal{H}}\limsup_{T\to\infty} \frac{1}{T} \int_0^T  \frac{1}{|\Omega|}\int_{\Omega} \Bu^\varepsilon\cdot \Be_z \psi^\varepsilon \dd \Bx \dd t.
\end{equation}
%
As a corollary of Theorem \ref{thm2}, we can obtain the upper semi-continuity of the Nusselt number as $\varepsilon \to 0$. See, e.g., \cite[Theorem 5]{Wang2009} for details of the proof.  
\begin{theorem}\label{thm3}
  There exist ergodic invariant measures $\nu_\varepsilon \in \mathcal{IM}_\varepsilon$ and $\nu_0 \in \mathcal{IM}_{0}$ such that
\[
\mathrm{Nu}_\varepsilon = \int_{\mathcal{H}} \int_\Omega \Bu_0^\varepsilon\cdot\Be_z\psi_0^\varepsilon\dd \Bx\dd \nu_\varepsilon(\psi_0^\varepsilon)
\quad 
\text{and}
\quad 
\mathrm{Nu}_0 = \int_{\mathcal{H}} \int_\Omega   \Bu_0\cdot\Be_z\psi_0\dd \Bx\dd \nu_0(\psi_0).\]
Moreover, the Nusselt number is upper semi-continuous
with respect to the parameter $\varepsilon$ in the sense that
\[
\limsup_{\varepsilon\to 0}\mathrm{Nu}_\varepsilon \leq  \mathrm{Nu}_0.
\]
\end{theorem}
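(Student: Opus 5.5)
The plan is to follow the standard route of \cite[Theorem 5]{Wang2009}, combining Theorem \ref{thm2} with a continuity property of the flux functional. Define on $\mathcal{H}^s$ the functional
\[
\Phi_\varepsilon(\psi) := \frac{1}{|\Omega|}\int_\Omega \Bu^\varepsilon[\psi]\cdot\Be_z\,\psi\dd\Bx,
\]
where $\Bu^\varepsilon[\psi] = -K^\varepsilon(\nabla p^\varepsilon+\psi\Be_z)$ and $p^\varepsilon$ solves the pressure problem with coefficient $K^\varepsilon$ ($K^0=K$). By Lemma \ref{estimate-p}, $|\Phi_\varepsilon(\psi)|\le C\|\psi\|_{L^2(\Omega)}^2$ with $C$ independent of $\varepsilon$. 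Hence $\Phi_\varepsilon$ is continuous on $\mathcal{H}$, and in particular on the attractors, which are bounded in $\mathcal{V}$ and lie in $B_1$ by Lemma \ref{prop3-4}.

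\textbf{Step 1: representation.} Fix $\varepsilon\in[0,\varepsilon_0]$. Since $\mathrm{Nu}_\varepsilon$ is a supremum over initial data of $\limsup$ time averages, I first use the absorbing set $B_1$ and the dissipative estimates (Lemmas \ref{prop2}/\ref{prop4} together with the smoothing in Lemma \ref{prop3}/\ref{prop6} and Theorem \ref{prop1-1}) to reduce to trajectories entering $B_1$. For an $\mathcal{H}$ initial datum, the solution lies in $\mathcal{H}^s$ at almost every positive time, and after that time it is governed by the semigroup. For a fixed datum, I choose $T_k\to\infty$ realizing the $\limsup$ and form the time-average measures $\mu_{T_k}=\frac1{T_k}\int_0^{T_k}\delta_{S_\varepsilon(t)\psi_0}\dd t$. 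These are eventually supported in the compact set $\overline{B_1}^{\mathcal{H}^s}$, so by Prokhorov (Krylov--Bogoliubov) a subsequence converges weakly to an invariant measure $\mu$. Since $\Phi_\varepsilon$ is bounded and continuous on that compact set, the $\limsup$ equals $\int\Phi_\varepsilon\dd\mu$. Conversely, for any invariant $\mu$, Birkhoff's theorem gives $\int\Phi_\varepsilon\dd\mu\le\mathrm{Nu}_\varepsilon$. Therefore $\mathrm{Nu}_\varepsilon=\sup_{\mu\in\mathcal{IM}_\varepsilon}\int\Phi_\varepsilon\dd\mu$. The set $\mathcal{IM}_\varepsilon$ is convex and weakly compact, since its measures are supported on the compact attractor $\mathscr{A}_\varepsilon$, and the map $\mu\mapsto\int\Phi_\varepsilon\dd\mu$ is linear and continuous. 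The supremum is therefore attained at an extreme point, i.e.\ an ergodic measure $\nu_\varepsilon$. This yields the two representation formulas.

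\textbf{Step 2: convergence of the functional.} I will show $\sup_{\psi\in B_1}|\Phi_\varepsilon(\psi)-\Phi_0(\psi)|\to0$. Write $\Bu^\varepsilon[\psi]-\Bu[\psi]$ using the error pressure equation \eqref{error-pressure} with $\widetilde\psi=0$. The estimate \eqref{5-6} then gives $\|\Bu^\varepsilon[\psi]-\Bu[\psi]\|_{L^2}\le CM_6^{1/2}\varepsilon^{1/3}$ uniformly on $B_1$. Consequently $|\Phi_\varepsilon-\Phi_0|\le C\varepsilon^{1/3}$ on $B_1$.

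\textbf{Step 3: passage to the limit.} Take $\varepsilon_k\to0$ with $\mathrm{Nu}_{\varepsilon_k}\to\limsup_{\varepsilon\to0}\mathrm{Nu}_\varepsilon$. By Theorem \ref{thm2}, after passing to a subsequence, $\nu_{\varepsilon_k}\rightharpoonup\mu_0\in\mathcal{IM}_0$. All these measures are supported in the compact set $\overline{B_1}^{\mathcal{H}^s}$, on which $\Phi_0$ is continuous and bounded. We then split
\[
\int\Phi_{\varepsilon_k}\dd\nu_{\varepsilon_k}=\int(\Phi_{\varepsilon_k}-\Phi_0)\dd\nu_{\varepsilon_k}+\int\Phi_0\dd\nu_{\varepsilon_k}.
\]
The first term tends to $0$ by Step 2, and the second tends to $\int\Phi_0\dd\mu_0$, which is at most $\mathrm{Nu}_0$ by Step 1. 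This proves $\limsup_{\varepsilon\to0}\mathrm{Nu}_\varepsilon\le\mathrm{Nu}_0$.

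\textbf{Main obstacle.} The delicate point is Step 1's reduction from arbitrary $\mathcal{H}$ data, where uniqueness fails in the weak setting, to the semigroup on $\mathcal{H}^s$. I would handle it by noting that any weak solution satisfies the energy inequality \eqref{2-8}, so it is in $\mathcal{V}$ at almost every time, and from that time on it coincides with the unique $\mathcal{H}^s$ trajectory. The initial transient contributes nothing to the long-time average, since the integrand is bounded by $C\|\psi\|_{L^2}^2$ and $\|\psi\|_{L^2}$ is bounded by \eqref{2-2}.
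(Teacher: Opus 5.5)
Your proposal is correct and follows essentially the same route as the paper, which simply invokes Theorem~\ref{thm2} and defers to \cite[Theorem 5]{Wang2009}: you represent $\mathrm{Nu}_\varepsilon$ as the maximum of the flux functional over $\mathcal{IM}_\varepsilon$, attained at an ergodic measure by compactness and convexity, and then pass to the limit using the weak convergence of invariant measures. Two of your steps make explicit details that the paper leaves to the citation: Step 2, which applies \eqref{5-6} with $\widetilde\psi=0$ to get $\sup_{B_1}|\Phi_\varepsilon-\Phi_0|\le C\varepsilon^{1/3}$, and the reduction from $\mathcal{H}$ data to the $\mathcal{H}^s$ semigroup via uniqueness from a time $t_0$ with $\psi(t_0)\in\mathcal{V}$, as in the proof of Lemma~\ref{prop3}.
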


\section{Finite Dimensionality of Global Attractors}
Attractors are usually fractal subsets of the phase space. Finite dimensionality is an important property characterizing the structure of global attractors. Let us recall the definition of fractal dimension (cf. \cite[Definition 4.1]{Ze00}): 

\begin{definition}
Let $\mathcal{M}$ be a precompact set in a metric space $\mathcal{X}$. $N_\eta(\mathcal{M},\mathcal{X})$ is the minimal number of closed balls in $\mathcal{X}$ of radius $\eta$ that cover the set $\mathcal{M}$. Define the Kolmogorov's entropy of $\mathcal{M}$ in $\mathcal{X}$ as 
$$
\mathcal{H}_\eta(\mathcal{M},\mathcal{X})=\log_2 N_\eta(\mathcal{M},\mathcal{X}).
$$
Then the fractal (box-counting) dimension of $\mathcal{M}$ in $\mathcal{X}$ is defined by
$$
\mathrm{dim}_\mathrm{f} (\mathcal{M}, \mathcal{X}) = \limsup_{\eta\to 0} \frac{\mathcal{H}_\eta(\mathcal{M},\mathcal{X})}{\log_2 (1/\eta)},
$$
\end{definition}

The following result due to \cite[Theorem 4.1]{Ze00} gives a general method to prove the finite
fractal dimensionality of a compact set.
\begin{theorem}\label{zelik-thm}
Let $E_1$ and $E$ be Banach spaces, $E_1$ be compactly embedded in $E$ and let $X\subset E_1$. Assume that there exists a map $\mathcal{S}:X\to X$, such that $\mathcal{S}(X)=X$, and the following `smoothing' property is valid
\begin{equation}
\|\mathcal{S}a_1-\mathcal{S}a_2\|_{E_1}\leq C \|a_1-a_2\|_E, \quad \forall\, a_1,a_2\in X.
\label{smooth2}
\end{equation}
Then the fractal dimension of $X$ in $E$ is finite and can be estimated in the following way 
$$
\mathrm{dim}_\mathrm{f}(X,E)\leq \mathcal{H}_{\frac{1}{4C}}(B(1,0,E_1),E),
$$
where $C$ is the same as in \eqref{smooth2} and $B(1,0,E_1)$ means the unit ball in $E_1$.
\end{theorem}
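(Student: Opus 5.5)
The plan is the standard iterative covering argument. It uses only the smoothing estimate \eqref{smooth2}, the surjectivity $\mathcal{S}(X)=X$, and the finiteness of
\[
N_0:=N_{\frac{1}{4C}}(B(1,0,E_1),E).
\]
$N_0$ is finite because $E_1$ embeds compactly into $E$, so the unit ball of $E_1$ is precompact in $E$.

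First I fix the initial covering. I take for granted, as in the intended application where $X$ is a global attractor, that $X$ is bounded in $E$. Then $X$ lies in a single $E$-ball of some radius $R$ centred at a point of $X$. If only boundedness in $E_1$ were given, boundedness in $E$ would follow from the continuous embedding $E_1\hookrightarrow E$. If $X$ were unbounded, the statement would not make sense for precompact sets, so this assumption is harmless.

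The key inductive step is as follows. Suppose $X$ is covered by $N$ balls $B_E(x_i,\eta)$ with centres $x_i\in X$. By \eqref{smooth2}, for any $a\in X\cap B_E(x_i,\eta)$ we have $\|\mathcal{S}a-\mathcal{S}x_i\|_{E_1}\le C\eta$. Hence $\mathcal{S}(X\cap B_E(x_i,\eta))$ lies in the $E_1$-ball of radius $C\eta$ centred at $\mathcal{S}x_i$.

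By scaling and translation, this $E_1$-ball is covered by $N_0$ balls in $E$ of radius $C\eta\cdot\frac{1}{4C}=\eta/4$. The centres of these balls need not lie in $X$. I discard the balls that miss $\mathcal{S}(X\cap B_E(x_i,\eta))$. In each remaining ball I pick a point of that image, which lies in $X$, and double the radius to $\eta/2$.

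Because $\mathcal{S}(X)=X$, the union over $i$ of these images is all of $X$. So $X$ is covered by at most $N\,N_0$ balls of radius $\eta/2$, again with centres in $X$.

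Iterating from the single ball of radius $R$ gives
\[
N_{R2^{-k}}(X,E)\le N_0^{\,k},\qquad k\ge 0 .
\]
For $\eta\in(R2^{-k-1},R2^{-k}]$, monotonicity of $N_\eta$ gives $\log_2 N_\eta(X,E)\le (k+1)\log_2 N_0$. Dividing by $\log_2(1/\eta)\ge k+\log_2(1/R)$ and letting $k\to\infty$ yields
\[
\mathrm{dim}_\mathrm{f}(X,E)\le \log_2 N_0=\mathcal{H}_{\frac{1}{4C}}(B(1,0,E_1),E).
\]

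The only delicate point is keeping the centres inside $X$, so that \eqref{smooth2} can be applied at the next stage. This costs the factor $2$ in the radius (from $\eta/4$ to $\eta/2$) and is exactly why the scale in the entropy is $\frac{1}{4C}$ rather than $\frac{1}{2C}$. Everything else is bookkeeping.
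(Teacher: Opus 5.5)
Your proof is correct. The paper does not prove this theorem; it quotes it as Theorem 4.1 of Zelik's paper, and your argument is the standard iterated-covering proof from that source. Smoothing sends each $E$-ball of radius $\eta$ centred in $X$ into an $E_1$-ball of radius $C\eta$, which is covered by $N_{\frac{1}{4C}}(B(1,0,E_1),E)$ balls of radius $\eta/4$. Recentring those balls at points of $X=\mathcal{S}(X)$ costs a factor $2$, which gives $N_{R2^{-k}}(X,E)\le N_0^{\,k}$.

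You are also right that boundedness (indeed precompactness) of $X$ in $E$ is an implicit hypothesis. For example, $E=E_1=\mathbb{R}$, $\mathcal{S}=\mathrm{id}$, $X=\mathbb{R}$ satisfies every stated assumption. This is harmless for the paper, since there $X=\mathscr{A}_\varepsilon$ is compact in $\mathcal{H}^s$.
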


The main result of this section is the following
\begin{theorem}\label{finite-dim}
Let $s\in (\frac12,1)$. For every $\varepsilon\in [0,\varepsilon_0]$, the global attractor $\mathscr{A}_\varepsilon$ constructed in Theorems \ref{attractor-sharp}, \ref{attractor-Diffuse} has a finite fractal dimension. Moreover, we have
\begin{align}
\mathrm{dim}_\mathrm{f}(\mathscr{A}_\varepsilon,\mathcal{H}^s)\leq \mathcal{H}_{\frac{1}{4C_A}}(B(1,0,\mathcal{V}),\mathcal{H}^s),\quad \forall\, \varepsilon\in [0,\varepsilon_0],
\label{A-dim}
\end{align}
where the constant $C_A>0$ depends only on $D_j$, $K_j$, $\Omega$, $c_\Delta$, $\delta$, $M_1$ and $M_5$, but is independent of the parameter $\varepsilon$.
\end{theorem}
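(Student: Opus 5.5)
We will apply Theorem \ref{zelik-thm} with $E=\mathcal{H}^s$, $E_1=\mathcal{V}$ (compactly embedded in $\mathcal{H}^s$ since $s<1$), $X=\mathscr{A}_\varepsilon$, and $\mathcal{S}=S_\varepsilon(1)$. Invariance of the attractor gives $\mathcal{S}(X)=X$, and Lemma \ref{prop3-4} gives $X\subset B_1\subset\mathcal{V}$. It then suffices to prove the smoothing estimate
\[
\|S_\varepsilon(1)a_1-S_\varepsilon(1)a_2\|_{\mathcal{V}}\le C_A\|a_1-a_2\|_{\mathcal{H}^s},\qquad a_1,a_2\in\mathscr{A}_\varepsilon,
\]
with $C_A$ independent of $\varepsilon\in[0,\varepsilon_0]$. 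Bound \eqref{A-dim} then follows at once.

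For two trajectories $\psi_i^\varepsilon(t)=S_\varepsilon(t)a_i$ starting on the attractor, Lemma \ref{prop3-4} gives uniform bounds: $\|\psi_i^\varepsilon(t)\|_{L^2}^2\le M_1H^2/\min_jD_j+1$ and $\|\nabla\psi_i^\varepsilon(t)\|_{L^2}^2\le M_6$ for all $t\ge0$. It also gives $\int_0^1\|\mathcal{L}^\varepsilon\psi_i^\varepsilon\|_{L^2}^2\le M_7(1)$. All of these are $\varepsilon$-independent.

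The difference $\bar\psi=\psi_1^\varepsilon-\psi_2^\varepsilon$ satisfies the $\varepsilon$-analogue of \eqref{H^s-6}.

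\textbf{Step 1: $\mathcal{H}^s$-energy estimate.} Test with $(\mathcal{L}^\varepsilon)^s\bar\psi$ as in \eqref{H^s-8}. Using the uniform $\mathcal{H}^s$ bounds on $\psi_i^\varepsilon$, which follow from the $H^1$ bounds and Lemma \ref{lem-B2}, and the uniform norm equivalences, this yields
\[
\|\bar\psi(t)\|_{\mathcal{H}^s}^2+\int_0^t\|\bar\psi\|_{\mathcal{H}_\varepsilon^{1+s}}^2\le C\|\bar\psi(0)\|_{\mathcal{H}^s}^2,\qquad t\in[0,1].
\]
In particular $\int_0^1\|\bar\psi\|_{\mathcal{V}}^2\le C\|\bar\psi(0)\|_{\mathcal{H}^s}^2$.

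\textbf{Step 2: $\mathcal{V}$-estimate.} Test the difference equation with $t\,\mathcal{L}^\varepsilon\bar\psi$. The linear term $\phi_b'\bar u_z$ is handled exactly as in \eqref{2-17}, at the cost of $C\|\nabla\bar\psi\|^2$. The transport term $(\Bu_1^\varepsilon\cdot\nabla\bar\psi,\mathcal{L}^\varepsilon\bar\psi)$ is treated as in \eqref{2-18}, using $\|\Bu_1^\varepsilon\|_{L^r}\le C\|\psi_1^\varepsilon\|_{L^r}$ and the interpolation Lemma \ref{lemmaA1} together with Lemma \ref{lemma-equivalence}. This gives $\frac14\|\mathcal{L}^\varepsilon\bar\psi\|^2+C\|\nabla\bar\psi\|^2$.

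The term $(\bar{\Bu}\cdot\nabla\psi_2^\varepsilon,\mathcal{L}^\varepsilon\bar\psi)$ is bounded by $\|\bar{\Bu}\|_{L^3}\|\nabla\psi_2^\varepsilon\|_{L^6}\|\mathcal{L}^\varepsilon\bar\psi\|$. We then use $\|\bar{\Bu}\|_{L^3}\le C\|\bar\psi\|_{L^3}\le C\|\bar\psi\|_{\mathcal{H}^s}$, which holds because $s>1/2$, and $\|\nabla\psi_2^\varepsilon\|_{L^6}\le C\|\mathcal{L}^\varepsilon\psi_2^\varepsilon\|$ with a constant uniform in $\varepsilon$ by Lemma \ref{lemma-equivalence}. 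This leads to
\[
\frac{d}{dt}\big(t\|\sqrt{D^\varepsilon}\nabla\bar\psi\|^2\big)\le C\|\nabla\bar\psi\|^2+Ct\,\|\mathcal{L}^\varepsilon\psi_2^\varepsilon\|^2\|\bar\psi\|_{\mathcal{H}^s}^2+Ct\|\nabla\bar\psi\|^2.
\]
Gronwall on $[0,1]$, combined with Step 1, the bound $\sup_t\|\bar\psi\|_{\mathcal{H}^s}\le C\|\bar\psi(0)\|_{\mathcal{H}^s}$, and $\int_0^1\|\mathcal{L}^\varepsilon\psi_2^\varepsilon\|^2\le M_7(1)$, then gives $\|\nabla\bar\psi(1)\|^2\le C_A^2\|\bar\psi(0)\|_{\mathcal{H}^s}^2$.

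\textbf{Main obstacle.} The key difficulty is keeping every constant uniform in $\varepsilon$. The nonlinear estimates require $W^{1,6}$ or $H^{3/2}$-type control in terms of $\|\mathcal{L}^\varepsilon\cdot\|$ or $\mathcal{H}_\varepsilon^{1+s}$ norms, and $\mathcal{H}^2_\varepsilon=\mathcal{V}\cap H^2$ is not uniformly equivalent to $H^2$ as $\varepsilon\to0$.

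The remedy is to never estimate $\partial_z$ derivatives in $H^2$. Instead, we use the structure $\|\partial_x\nabla\psi\|,\|\partial_y\nabla\psi\|,\|\nabla(D^\varepsilon\partial_z\psi)\|\le C\|\mathcal{L}^\varepsilon\psi\|$ from Lemma \ref{lemma-equivalence}, with $C$ depending only on $\min_j D_j$ and $\max_j D_j$. Sobolev embedding is then applied to $\nabla_{x,y}\psi$ and $D^\varepsilon\partial_z\psi$, followed by division by $D^\varepsilon\ge\min_jD_j$. This is exactly how Lemma \ref{prop6} was obtained, so we reuse the inequalities of Lemmas \ref{lemma-equivalence} and \ref{lemmaA1} there verbatim. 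If needed, Step 1 can be dropped in favour of the cruder time-weighted estimate alone.
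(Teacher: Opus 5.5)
Your proposal is correct and follows the paper's proof essentially step for step. You use Zelik's criterion with $\mathcal{S}=S_\varepsilon(1)$, $E_1=\mathcal{V}$, $E=\mathcal{H}^s$, then an integrated $\mathcal{H}^s$ difference estimate (the paper's \eqref{dim-0}), followed by a $t$-weighted test with $\mathcal{L}^\varepsilon\bar\psi$ that uses $M_7$ and the $\varepsilon$-uniform equivalences of Lemmas \ref{lemma-equivalence} and \ref{lemmaA1}. The only deviation is cosmetic: you split $(\bar{\Bu}\cdot\nabla\psi_2,\mathcal{L}^\varepsilon\bar\psi)$ as $L^3\times L^6\times L^2$ and use $\mathcal{H}^s\hookrightarrow L^3$, whereas the paper splits it as $L^6\times L^3\times L^2$ and interpolates $\bar\psi$ down to $L^2$; both close in the same way.
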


To prove Theorem \ref{finite-dim}, it remains to verify the smoothing property for the semigroups $\{S_\varepsilon(t)\}_{t\geq 0}$, $\varepsilon\in [0,\varepsilon_0]$. We first consider the case $\varepsilon=0$:

\begin{lemma}\label{lem-smooth}
Let $s\in (\frac12,1)$. Suppose $(\Bu_1, p_1, \psi_1)$ and $(\Bu_2, p_2, \psi_2)$ are two solutions to the sharp interface model \eqref{Sharps}--\eqref{bc} corresponding to the initial data $\psi_{1,0}, \psi_{2,0}\in \mathscr{A}_0$, respectively. Then we have 
\begin{equation}\label{smooth}
\|\nabla \psi_1 (t)- \nabla \psi_2(t)  \|^2_{L^2(\Omega)}
\leq M_{11}\left(\frac{1}{t}+t^2\right)e^{M_9 t} \|\L^{\frac{s}{2}}(\psi_{1,0}-\psi_{2,0})\|_{L^2(\Omega)}^2, \quad \forall\, t>0,
\end{equation}
where $M_9$, $M_{11}$ are positive constants  depending only on $D_j$, $K_j$, $\Omega$, $c_\Delta$, $\delta$, $M_1$ and $M_5$. 
\end{lemma}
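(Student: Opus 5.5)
We prove \eqref{smooth} by a two-step smoothing argument for the difference $\overline{\psi}=\psi_1-\psi_2$, which satisfies \eqref{H^s-6}--\eqref{H^s-6-ini}. Both trajectories stay on $\mathscr{A}_0\subset B_1$, which is invariant. So by Lemma \ref{prop3-4} we have, for all $t\ge0$ and $i=1,2$, the uniform bounds $\|\psi_i(t)\|_{L^2}^2\le \frac{M_1H^2}{\min_j D_j}+1$ and $\|\nabla\psi_i(t)\|_{L^2}^2\le M_6$. Consequently $\|\psi_i(t)\|_{L^r}$ and $\|\psi_i(t)\|_{\mathcal{H}^s}$ are bounded uniformly in time, and $\int_0^t\|\mathcal{L}\psi_i\|_{L^2}^2\le M_7(t)$, which grows linearly in $t$.

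\emph{Step 1 ($\mathcal{H}^s$ continuity with dissipation).} Inserting these uniform bounds into \eqref{H^s-8} turns it into
\[
\frac{\dd}{\dd t}\|\mathcal{L}^{\frac s2}\overline{\psi}\|_{L^2}^2+\tfrac12\|\mathcal{L}^{\frac{1+s}{2}}\overline{\psi}\|_{L^2}^2\le M_9\|\mathcal{L}^{\frac s2}\overline{\psi}\|_{L^2}^2 .
\]
Here we keep the dissipative term, which was dropped or weakened in \eqref{H^s-8}. Gronwall then gives $\|\mathcal{L}^{\frac s2}\overline{\psi}(t)\|^2\le e^{M_9t}\|\mathcal{L}^{\frac s2}\overline{\psi}_0\|^2$. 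Integrating also yields $\int_0^t\|\mathcal{L}^{\frac{1+s}{2}}\overline{\psi}\|^2\le C(1+t)e^{M_9t}\|\mathcal{L}^{\frac s2}\overline{\psi}_0\|^2$, and since $\mathcal{H}^{1+s}\subset\mathcal{V}$, the same bound holds for $\int_0^t\|\nabla\overline{\psi}\|^2$.

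\emph{Step 2 ($\mathcal{V}$ estimate with a time weight).} Test \eqref{H^s-6}$_3$ by $\mathcal{L}\overline{\psi}$ to get $\frac12\frac{\dd}{\dd t}\|\sqrt D\nabla\overline{\psi}\|^2+\|\mathcal{L}\overline{\psi}\|^2=-(\Bu_1\cdot\nabla\overline{\psi}+\overline{\Bu}\cdot\nabla\psi_2+\phi_b'\overline{u}_z,\mathcal{L}\overline{\psi})$, and estimate the three terms as follows.
\begin{itemize}
\item The term $(\Bu_1\cdot\nabla\overline{\psi},\mathcal{L}\overline{\psi})$ is treated exactly as in \eqref{2-18}, using the uniform $L^r$ bound on $\psi_1$.
\item The $\phi_b'$ term is bounded by $C\|\overline{\psi}\|_{L^2}\|\mathcal{L}\overline{\psi}\|$.
\item The term $(\overline{\Bu}\cdot\nabla\psi_2,\mathcal{L}\overline{\psi})$ is the main obstacle, because $\nabla\psi_2$ is only uniformly bounded in $L^2$. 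We use $\|\overline{\Bu}\|_{L^6}\le C\|\overline{\psi}\|_{L^6}\le C\|\nabla\overline{\psi}\|$ (Lemma \ref{estimate-p}). For $\nabla\psi_2$ we use the piecewise $H^2$ bound $\|\nabla\psi_2\|_{L^3}\le C\|\nabla\psi_2\|^{1/2}\|\mathcal{L}\psi_2\|^{1/2}$ (Lemmas \ref{W-embedding}, \ref{lemmaA1}).
\end{itemize}
This gives the bound $\frac14\|\mathcal{L}\overline{\psi}\|^2+C(1+\|\mathcal{L}\psi_2\|_{L^2})\|\nabla\overline{\psi}\|^2$, and the coefficient $1+\|\mathcal{L}\psi_2\|_{L^2}$ is integrable in time by Lemma \ref{prop3-4}. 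If that coefficient's integral grows too fast, we instead use the Step 1 quantity and place $\overline{\Bu}$ in $L^r$, controlled by $\|\overline{\psi}\|_{\mathcal{H}^s}$.

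We arrive at $y'\le g(t)y+h(t)$ with $y=\|\sqrt D\nabla\overline{\psi}\|^2$, $\int_0^t g\le C(1+t)$ and $h\le C\|\mathcal{L}^{\frac s2}\overline{\psi}\|^2$. Multiply by $t$: $(ty)'\le g\,ty+y+th$. Gronwall, combined with the Step 1 bounds on $\int_0^t y$ and on $h$, gives
\[
t\,y(t)\le e^{C(1+t)}\big[C(1+t)+Ct^2\big]e^{M_9t}\|\mathcal{L}^{\frac s2}\overline{\psi}_0\|^2 .
\]
Dividing by $t$ and enlarging $M_9$, $M_{11}$ produces the factor $(\frac1t+t^2)e^{M_9t}$ in \eqref{smooth}.

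All constants depend only on $D_j$, $K_j$, $\Omega$, $c_\Delta$, $\delta$, $M_1$, $M_5$ (through $M_6$ and $M_7$), as required.
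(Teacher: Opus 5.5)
Your proposal is correct and follows essentially the same route as the paper. The integrated $\mathcal{H}^s$-continuity estimate (from \eqref{H^s-8} and \eqref{S-conti-Hs}) controls $\int_0^t\|\nabla\overline{\psi}\|_{L^2(\Omega)}^2$ and $\sup_{[0,t]}\|\overline{\psi}\|_{L^2(\Omega)}^2$; then the energy estimate obtained by testing with $\mathcal{L}\overline{\psi}$ is multiplied by $t$ and integrated. The only variation is in the cross term $(\overline{\Bu}\cdot\nabla\psi_2,\mathcal{L}\overline{\psi})$: the paper interpolates $\|\nabla\overline{\psi}\|_{L^2(\Omega)}\le C\|\overline{\psi}\|_{L^2(\Omega)}^{1/2}\|\mathcal{L}\overline{\psi}\|_{L^2(\Omega)}^{1/2}$, so $\|\mathcal{L}\psi_2\|_{L^2(\Omega)}^2$ multiplies the already controlled $\|\overline{\psi}\|_{L^2(\Omega)}^2$ and the coefficient of $\|\sqrt{D}\nabla\overline{\psi}\|_{L^2(\Omega)}^2$ is the constant $M_{10}$, so direct integration suffices; your split instead puts $\|\mathcal{L}\psi_2\|_{L^2(\Omega)}$ into the Gronwall coefficient, which is harmless because its time integral grows only linearly and merely enlarges $M_9$.
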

\begin{proof}
The difference of solutions 
$(\overline{\Bu}, \overline{\psi}, \overline{p}) : = (\Bu_1 - \Bu_2, p_1 - p_2, \psi_1 - \psi_2)$ satisfies the system \eqref{H^s-6} with the initial condition \eqref{H^s-6-ini}. Integrating \eqref{H^s-8} with respect to time and using \eqref{S-conti-Hs}, we get
\begin{align}
& \int_0^t \|\sqrt{D} \nabla \overline{\psi}(\tau)\|^2_{L^2(\Omega)}\,\mathrm{d}\tau \leq \int_0^t \|\mathcal{L}^s\overline{\psi}(\tau)\|^2_{L^2(\Omega)}\,\mathrm{d}\tau 
\notag \\
& \quad \leq \|\L^{\frac{s}{2}}(\psi_{1,0}-\psi_{2,0})\|_{L^2(\Omega)}^2 + C
\int_0^t \|\mathcal{L}^\frac{s}{2}\overline{\psi}(\tau)\|_{L^2(\Omega)}^2 \left(\|\psi_1(\tau)\|_{\mathcal{H}^s}^\frac{4}{2s-1}+ \| \psi_2(\tau)\|_{\mathcal{H}^s}^\frac{4}{2s-1}+1\right)\,\mathrm{d}\tau 
\notag \\
&\quad \leq  \|\L^{\frac{s}{2}}(\psi_{1,0}-\psi_{2,0})\|_{L^2(\Omega)}^2 \left(1+M_{9}t e^{M_9t}\right).
\label{dim-0}
\end{align}
Here, we use the fact that $\psi_1,\psi_2\in \mathscr{A}_0$ are uniformly bounded in $\mathcal{V}$ and take
$$
M_9=C\big(M_5^\frac{2}{2s-1}+1\big).    
$$
Next, testing equation \eqref{H^s-6}$_3$ by $\mathcal{L}\overline{\psi}$ and using integration by parts, we obtain
\begin{equation}\label{dim-1}
\frac{1}{2}\frac{\dd}{\dd t}\|\sqrt{D}\nabla \overline{\psi}\|_{L^2(\Omega)}^2 + \|\mathcal{L}\overline{\psi}\|_{L^2(\Omega)}^2 
= -(\Bu_1\cdot\nabla \overline{\psi}, \L\overline{\psi})-(\overline{\Bu}\cdot\nabla \psi_2, \L\overline{\psi}) - (\phi_b'\overline{\Bu} \cdot \Be_z, \L\overline{\psi}).
\end{equation}
The right-hand side of \eqref{dim-1} can be estimated as follows: 
\begin{align}
\left|\int_\Omega  \phi_b' \overline{u}_z \mathcal{L}\overline{\psi} \dd \Bx\right| 
& \leq c_\Delta \delta ^{-1} \|\overline{u}_z\|_{L^2(\Omega)}\|\mathcal{L} \overline{\psi}\|_{L^2(\Omega)} \notag \\
& \leq 2c_\Delta (\|\partial_x \overline{u}_x\|_{L^2(\Omega)}+\|\partial_y \overline{u}_y\|_{L^2(\Omega)})\|\mathcal{L} \overline{\psi}\|_{L^2(\Omega)}  \notag \\
& \leq  \frac{4(\max_{j} K_j)^2 c_\Delta  }{\min_{j} K_j} (\|\partial_x \overline{\psi}\|_{L^2(\Omega)} 
+ \|\partial_y \overline{\psi}\|_{L^2(\Omega)})\|\mathcal{L} \overline{\psi}\|_{L^2(\Omega)}  \notag \\
& \leq \frac{1}{4}\|\mathcal{L} \overline{\psi}\|_{L^2(\Omega)}^2+\frac{16(\max_{j} K_j)^4 c_\Delta^2  }{(\min_{j} K_j)^2}\|\nabla\overline{\psi}\|_{L^2(\Omega)}^2.
\label{dim-2}
\end{align}
Furthermore, using H\"older's inequality, Young's inequality, \eqref{Sharps}$_1$  and Lemmas \ref{W-embedding}, \ref{lemmaA1}, we have 
\begin{align}
&  \left|\int_\Omega  \Bu_1\cdot\nabla \overline{\psi} \mathcal{L}\overline{\psi} \dd \Bx\right| \notag \\
& \quad \leq  \|\Bu_1\|_{L^6(\Omega)} \|\nabla \overline{\psi}\|_{L^3(\Omega)}  \|\mathcal{L} \overline{\psi}\|_{L^2(\Omega)}  \notag \\
& \quad \leq   (1+C_p)\max_j K_j \|\psi_1\|_{L^6(\Omega)} \|\nabla \overline{\psi}\|_{L^3(\Omega)}  \|\mathcal{L} \overline{\psi}\|_{L^2(\Omega)}  \notag \\
& \quad \leq  (1+C_p) C_2 C_u^\frac{1}{2}\max_j K_j \|\psi_1\|_{L^6(\Omega)} \|\nabla \overline{\psi}\|_{L^2(\Omega)} ^{\frac12}  \|\mathcal{L} \overline{\psi}\|_{L^2(\Omega)}^{\frac32} \notag \\ 
& \quad \leq \frac{1}{8}\|\mathcal{L} \overline{\psi}\|_{L^2(\Omega)}^2+ 864\big[(1+C_p) C_2 C_u^\frac{1}{2}\max_j K_j\big]^4 \|\psi_1\|_{L^6(\Omega)}^4 \|\nabla \overline{\psi}\|_{L^2(\Omega)}^2, 
\label{dim-3}
\end{align}
and 
\begin{align}
& \left|\int_\Omega \overline{\Bu}\cdot\nabla \psi_2 \L\overline{\psi}\,\dd \Bx\right|  \notag \\
& \quad \leq   \|\overline{\Bu}\|_{L^6(\Omega)} \|\nabla \psi_2\|_{L^3(\Omega)}  \|\mathcal{L} \overline{\psi}\|_{L^2(\Omega)}  \notag \\
& \quad \leq   (1+C_p)\max_j K_j \|\overline{\psi}\|_{L^6(\Omega)} \|\nabla \psi_2\|_{L^3(\Omega)}  \|\mathcal{L} \overline{\psi}\|_{L^2(\Omega)}   \notag \\
& \quad \leq C_1 (1+C_p)\max_j K_j \|\nabla \overline{\psi}\|_{L^2(\Omega)} \|\nabla \psi_2\|_{L^3(\Omega)}  \|\mathcal{L} \overline{\psi}\|_{L^2(\Omega)}  \notag \\
& \quad \leq \frac{C_1 (1+C_p)\max_j K_j}{(\min_j D_j)^\frac12}   \|\nabla \psi_2\|_{L^3(\Omega)}  \|\mathcal{L} \overline{\psi}\|_{L^2(\Omega)}^\frac32\|\overline{\psi}\|^\frac12_{L^2(\Omega)}  \notag \\
&\quad \leq \frac{1}{8}\|\mathcal{L} \overline{\psi}\|_{L^2(\Omega)}^2+ 864 \frac{[C_1 (1+C_p)\max_j K_j]^4}{(\min_j D_j)^2} \|\nabla \psi_2\|_{L^3(\Omega)}^4 \| \overline{\psi}\|_{L^2(\Omega)}^2  \notag \\
& \quad \leq \frac{1}{8}\|\mathcal{L} \overline{\psi}\|_{L^2(\Omega)}^2+ 864 \frac{[C_1C_2C_u (1+C_p)\max_j K_j]^4}{(\min_j D_j)^2} \|\mathcal{L} \psi_2\|_{L^2(\Omega)}^2\|\nabla \psi_2 \|^2_{L^2(\Omega)}\| \overline{\psi}\|_{L^2(\Omega)}^2.
\label{dim-4}
\end{align}
Inserting \eqref{dim-2}--\eqref{dim-4} into \eqref{dim-1}, we obtain 
\begin{equation}\label{dim-5}
\frac{\dd}{\dd t}\|\sqrt{D}\nabla \overline{\psi}\|_{L^2(\Omega)}^2 + \|\mathcal{L}\overline{\psi}\|_{L^2(\Omega)}^2 
\leq M_{10} \|\sqrt{D}\nabla \overline{\psi}\|_{L^2(\Omega)}^2 +M_{10}\|\mathcal{L} \psi_2\|_{L^2(\Omega)}^2\|\overline{\psi}\|^2,
\end{equation}
where $M_{10}$ is a positive constant that depends only on $D_j,K_j,\Omega,c_\Delta,\delta$, $M_1$, $M_5$. Multiplying \eqref{dim-5} by $t>0$ and integrating over $[0,t]\subset [0,\infty)$, we get 
\begin{equation}\label{dim-6}
\begin{aligned}
   & t \|\sqrt{D}\nabla \overline{\psi}(t)\|_{L^2(\Omega)}^2 \\
   &\quad \leq \int_0^t (1+M_{10}\tau) \|\sqrt{D}\nabla \overline{\psi}(\tau)\|_{L^2(\Omega)}^2\,\mathrm{d}\tau 
   + M_{10} \int_0^t \tau \|\mathcal{L} \psi_2(\tau)\|_{L^2(\Omega)}^2 \|\overline{\psi}(\tau)\|_{L^2(\Omega)}^2\,\mathrm{d}\tau
   \\
   &\quad \leq (1+M_{10}t)\int_0^t \|\sqrt{D}\nabla \overline{\psi}(\tau)\|_{L^2(\Omega)}^2\,\mathrm{d}\tau
   + M_{10} tM_7(t) \sup_{\tau\in[0,t]}\|\overline{\psi}(\tau)\|_{L^2(\Omega)}^2,
\end{aligned}
\end{equation}
which combined with \eqref{S-conti-Hs} and \eqref{dim-0} yields the conclusion \eqref{smooth}.
\end{proof}
\begin{remark}
For the case $\varepsilon\in (0,\varepsilon_0]$, using the same argument as for Lemma \ref{lem-smooth}, we can obtain a similar estimate:
\begin{equation}\label{smooth-ep}
\|\nabla \psi_1^\varepsilon (t)- \nabla \psi_2^\varepsilon (t)  \|^2_{L^2(\Omega)}
\leq M_{11}\left(\frac{1}{t}+t^2\right)e^{M_9 t}\|\L^{\frac{s}{2}}(\psi_{1,0}^\varepsilon -\psi_{2,0}^\varepsilon)\|_{L^2(\Omega)}^2, \quad \forall\, t>0.
\end{equation}
A careful examination of the estimates shows that the coefficients in the estimate \eqref{smooth-ep} are independent of the parameter $\varepsilon$. 
\end{remark}

\begin{proof}[Proof of Theorem \ref{finite-dim}]
When $\varepsilon=0$, we take 
$$
\mathcal{S}=S_0(1),\quad E_1=\mathcal{V},\quad E= \mathcal{H}^s,\quad X=\mathscr{A}_0.
$$
It follows from Lemma \ref{lem-smooth} that 
\begin{equation}\notag 
\|\nabla \psi_1 (1)- \nabla \psi_2(1)  \|^2_{L^2(\Omega)}
\leq 2 M_{11}e^{M_9} \|\L^{\frac{s}{2}}(\psi_{1,0}-\psi_{2,0})\|_{L^2(\Omega)}^2.
\end{equation}
By the abstract result Theorem \ref{zelik-thm}, we can conclude that $\mathrm{dim}_\mathrm{f}(\mathscr{A}_0,\mathcal{H}^s)$ is finite as described in \eqref{A-dim} with the constant 
$$
C_A=\left(2 M_{11}e^{M_9}\right)^\frac12. 
$$ 
Thanks to \eqref{smooth-ep}, similar conclusion holds for $\mathscr{A}_\varepsilon$, $ \varepsilon\in (0,\varepsilon_0]$, with the same constant $C_A$. Hence, the family of global attractors $\{ \mathscr{A}_\varepsilon\}_{\varepsilon\in [0,\varepsilon_0]}$ have the same upper bound for their fractal dimensions. The proof is complete. 
\end{proof}
\begin{remark}
The upper bound for the fractal dimensions of $\{\mathscr{A}_\varepsilon\}_{\varepsilon\in [0,\varepsilon_0]}$ obtained in Theorem \ref{finite-dim} may not be sharp, compared to the volume contraction method (see \cite{Temam1997} and the references therein). However, the latter approach requires the semigroup $S_\varepsilon(t)$ to be uniformly quasi-differentiable with respect to the initial data, which is not available here due to the low regularity of the solutions. 
\end{remark}

\section{Conclusion}

The long-time validity of simplified models is a central issue, as they are frequently used for predictive purposes in complex physical systems. In this work, we establish the long-time validity of the three-dimensional sharp-interface Darcy--Boussinesq model for convection in layered porous media by demonstrating that it arises as the zero-thickness limit of the more physically realistic diffuse-material-interface formulation. This validation has been obtained in terms of the upper semi-continuity of global attractors, invariant measures, long-time averaged statistics such as the Nusselt number, and a uniform upper bound on the fractal dimension of the attractors. Our results extend the finite-time convergence analysis of \cite{DW2025}, where the behavior of solutions and the near-singular structure of the velocity field were examined.

The analysis relies on deriving uniform-in-time bounds, independent of the thickness of the transition layer, in a space compactly embedded within the natural phase space. These estimates are obtained through a Hopf--Constantin--Doering type framework \cite{doering1998jfm, Wang2008} adapted to the layered setting, combined with a careful choice of weighted function spaces suited to the models under consideration, as well as a novel interpolation space that is associated with a fractional power of the principal linear elliptic operator with discontinuous coefficients. While the present work focuses on straight interfaces, the extension to curved interfaces can be carried out analogously by employing curvilinear coordinates, provided that the interfaces are sufficiently smooth and well separated.

Although we have established upper semi-continuity of the long-time dynamics, several questions remain open. A natural direction is to investigate whether full continuity can be obtained, particularly in regimes where the dynamics is turbulent or intrinsically stochastic. A more detailed characterization of global attractors, especially near the critical Rayleigh–Darcy threshold for the onset of convection \cite{MW2013}, also deserves further study. From a statistical point of view, determining the rate of convergence of invariant measures would be of significant practical importance. Understanding the dependence of the Nusselt number on the Rayleigh number remains a central issue in convection theory \cite{doering1998jfm, DPZS2022, HFCJ2012}. Finally, since physical systems are inevitably subject to randomness, it is of considerable interest to analyze how small stochastic perturbations affect the long-time dynamics \cite{Young2002}.

\section*{Acknowledgments}
Wu is a member of the Key Laboratory of Mathematics for Nonlinear Sciences (Fudan University), Ministry of Education of China.


\appendix
\renewcommand{\thesection}{\Alph{section}}

\section{Some elementary lemmas}\label{sec-app}

In the appendix, we collect some elementary lemmas frequently used in previous proofs. First, we give a  Poincar\'e type inequality in the layer domain.
\begin{lemma}\label{lemmaA5}
  Let $E= (0,L)^2\times (0,h)$ be a layer domain in $\R^3$, $L, h>0$. Then it holds that
  \begin{equation}\label{A5-1}
    \|\psi\|_{L^2(E)} \leq h \|\partial_z\psi\|_{L^2(E)}
  \end{equation}
  for any $\psi\in H^1(\Omega)$ vanishing on the boundary $[0,L]^2\times \{0\}$ (or $[0,L]^2\times \{h\}$).
\end{lemma}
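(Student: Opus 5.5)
The statement is the one-dimensional Poincaré inequality in the vertical variable, applied fiberwise. I will prove it first for smooth $\psi$ (periodic in $x,y$, or simply smooth up to the boundary of $E$) vanishing on the bottom face $z=0$. The general $H^1$ case then follows by density: smooth functions vanishing on $[0,L]^2\times\{0\}$ are dense in the subspace of $H^1(E)$ with zero trace there, and both sides of \eqref{A5-1} are continuous in the $H^1$ norm. The case of vanishing on the top face $z=h$ is symmetric under the change of variable $z\mapsto h-z$.

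For fixed $(x,y)\in(0,L)^2$, the fundamental theorem of calculus gives
\[
\psi(x,y,z)=\int_0^z\partial_z\psi(x,y,\tau)\dd\tau .
\]
Cauchy--Schwarz then yields
\[
|\psi(x,y,z)|^2\le z\int_0^h|\partial_z\psi(x,y,\tau)|^2\dd\tau .
\]
Integrating in $z$ over $(0,h)$ gives
\[
\int_0^h|\psi|^2\dd z\le \frac{h^2}{2}\int_0^h|\partial_z\psi|^2\dd z .
\]
Integrating next over $(x,y)\in(0,L)^2$ produces $\|\psi\|_{L^2(E)}^2\le \frac{h^2}{2}\|\partial_z\psi\|_{L^2(E)}^2$. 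This is stronger than \eqref{A5-1}, since $h/\sqrt2\le h$.

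The only point requiring care is the density step, i.e., justifying the pointwise identity for a.e. $(x,y)$. I would handle it either by approximation or by noting that an $H^1$ function is absolutely continuous on almost every vertical line (ACL characterization) with trace equal to the line limit. In either case this is standard. The lemma is also applied in the paper on the thin strips $\Omega_\delta$, with $h=\delta$, and on the whole of $\Omega$, with $h=H$. Both uses fit the statement directly, because in each case the function vanishes on the outer boundary face.
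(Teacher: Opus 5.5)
Your proof is correct and follows essentially the same route as the paper: reduce to smooth functions, apply the fundamental theorem of calculus along vertical lines starting from the face where $\psi$ vanishes, use Cauchy--Schwarz, and integrate over $E$. The only difference is that you keep the factor $z$ from Cauchy--Schwarz instead of bounding it by $h$ immediately, as the paper does, which gives the slightly sharper constant $h/\sqrt{2}$.
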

\begin{proof}
It suffices to prove \eqref{A5-1} for smooth functions. For any $(x,y,z)\in E$, using the fundamental theorem of calculus and H\"older inequality, we get 
\begin{equation} \nonumber
  |\psi(x,y,z)|^2 =  \left( \psi(x,y,0) + \int_0^z \partial_z \psi(x,y,s) \dd s \right)^2
  \leq  \left(\int_0^h |\partial_z \psi| \dd s\right)^2 \leq h\int_0^h |\partial_z \psi|^2 \dd s. 
\end{equation}
Integrating the above inequality over $E$, we see that 
\begin{equation}\nonumber
\|\psi\|_{L^2(E)}^2 \leq  h\int_0^h\dd z \int_{(0,L)^{2}}\dd x\dd y \int_0^h |\partial_z \psi|^2 \dd s 
=  h^2 \|\partial_z\psi\|_{L^2(E)}^2,
\end{equation}
which leads to the conclusion. 
\end{proof}
  

The following lemma provides some estimates involving the space $\mathcal{W}$ (see \cite[Lemma 2.2]{CNW2025}).
\begin{lemma}\label{W-embedding}
There exist two constants $C_l, C_u>0$ depending only on $H$ and $D_j$ such that
\begin{equation} \label{A4-0-1}
C_l\|\mathcal{L}\psi\|_{L^2(\Omega)} \leq \|\psi\|_{\mathcal{W}} \leq C_u\|\mathcal{L}\psi\|_{L^2(\Omega)},\quad \forall\, \psi \in \mathcal{W}.
\end{equation} 
%
\end{lemma}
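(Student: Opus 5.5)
The upper bound is the harder half. I would prove it layer by layer, and then obtain the lower bound directly from the structure of the operator. Throughout, $\psi\in\mathcal{W}$ and $f:=\mathcal{L}\psi=-\nabla\cdot(D\nabla\psi)\in L^2(\Omega)$.

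\emph{Lower bound.} In each layer $\Omega_j$ the coefficient $D=D_j$ is constant. Hence $f=-D_j(\partial_x^2\psi+\partial_y^2\psi)-\partial_z(D\partial_z\psi)$ there. Globally we can write $f=-D(\partial_{xx}\psi+\partial_{yy}\psi)-\partial_z(D\partial_z\psi)$ as an identity in $L^2(\Omega)$. This is legitimate because $\partial_x\psi,\partial_y\psi\in H^1$ and $D\partial_z\psi\in H^1$, so no interface distributions appear. Consequently $\|f\|_{L^2}\le \max_jD_j(\|\partial_x\psi\|_{H^1}+\|\partial_y\psi\|_{H^1})+\|D\partial_z\psi\|_{H^1}\le C\|\psi\|_{\mathcal{W}}$, which gives $C_l$.

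\emph{Upper bound, tangential part.} First, test $f$ with $\psi$ and use Lemma \ref{lemmaA5} to get $\|\nabla\psi\|_{L^2}\le C\|f\|_{L^2}$, with $C$ depending on $H$ and $\min_jD_j$. Next, exploit that $D$ depends only on $z$ and that the domain is periodic in $x,y$. Testing $f$ against $-\partial_{xx}\psi$ and $-\partial_{yy}\psi$, which is admissible via difference quotients in $x,y$ since these preserve \eqref{bc} and the interface conditions, yields
$$\|\sqrt{D}\nabla\partial_x\psi\|_{L^2}^2+\|\sqrt{D}\nabla\partial_y\psi\|_{L^2}^2=-(f,\partial_{xx}\psi+\partial_{yy}\psi)\le \|f\|_{L^2}\|\partial_{xx}\psi+\partial_{yy}\psi\|_{L^2}.$$
Absorbing the last factor gives $\|\partial_x\psi\|_{H^1}+\|\partial_y\psi\|_{H^1}\le C\|f\|_{L^2}$.

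\emph{Upper bound, normal part.} From the identity above, $\partial_z(D\partial_z\psi)=-f-D(\partial_{xx}\psi+\partial_{yy}\psi)$ is bounded in $L^2$ by $C\|f\|_{L^2}$. The horizontal derivatives of $D\partial_z\psi$ equal $D\partial_z\partial_x\psi$ and $D\partial_z\partial_y\psi$, which were controlled in the previous step. Finally, $\|D\partial_z\psi\|_{L^2}\le C\|\nabla\psi\|_{L^2}$. Summing these gives $\|\psi\|_{\mathcal{W}}\le C_u\|f\|_{L^2}$, with constants depending only on $H$ and the $D_j$ (and $L$ only through Poincaré, which involves $H$ alone).

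The main obstacle is justifying the tangential step rigorously, namely that $\partial_x\psi\in\mathcal{V}$ and that the test with $-\partial_{xx}\psi$ is legitimate. I would handle this by a Galerkin or eigenfunction density argument: first prove the estimates for finite combinations of the $w_k$, which are piecewise smooth, and then pass to the limit in the graph norm of $\mathcal{L}$.
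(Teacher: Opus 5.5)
Your proposal is correct and takes essentially the same route as the paper. The paper cites \cite[Lemma 2.2]{CNW2025} for this statement, but it proves the $\varepsilon$-analogue, Lemma~\ref{lemma-equivalence}, by exactly your steps: expanding $\mathcal{L}\psi$ directly for the first inequality, then testing with $\psi$ and applying Lemma~\ref{lemmaA5}, testing with $-\partial_x^2\psi$ and $-\partial_y^2\psi$, and recovering $\partial_z(D\partial_z\psi)$ from the equation. Your difference-quotient and eigenfunction-density justification of the tangential test is a useful addition that the paper leaves implicit.
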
 

Analogously, we have 
\begin{lemma}\label{lemma-equivalence}
Let $0<\varepsilon\ll 1$ be given. There exist two constants $C_l, C_u>0$ depending only on $H$ and $D_j$ such that
\begin{equation}\label{A4-0}
C_l\| \L^\varepsilon \psi \|_{L^2(\Omega)} \leq \| \psi \|_{\mathcal{W}^\varepsilon} \leq C_u\| \L^\varepsilon \psi \|_{L^2(\Omega)}, \quad \forall\,\psi \in \mathcal{W}^\varepsilon,
\end{equation}  
\end{lemma}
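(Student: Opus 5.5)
The plan is to copy the proof of Lemma \ref{W-embedding} for the sharp operator, but to track every constant so that it depends only on $H$ and $\min_j D_j$, $\max_j D_j$, never on $\|(D^\varepsilon)'\|_{L^\infty}$, which is of order $\varepsilon^{-1}$. Since $D^\varepsilon$ is continuous, piecewise linear in $z$, and takes values in $[\min_j D_j,\max_j D_j]$, the lower and upper bounds of $D^\varepsilon$ coincide with those of $D$. I would work with $\psi\in\mathcal{W}^\varepsilon=\mathcal{V}\cap H^2(\Omega)$, where $\psi$ is periodic in $(x,y)$ and vanishes at $z=0,-H$.

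\textbf{Lower bound.} Because $D^\varepsilon$ depends only on $z$, I expand
\[
\L^\varepsilon\psi=-D^\varepsilon(\partial_{xx}\psi+\partial_{yy}\psi)-\partial_z(D^\varepsilon\partial_z\psi).
\]
This gives $\|\L^\varepsilon\psi\|_{L^2}\le \max_j D_j(\|\partial_x\psi\|_{H^1}+\|\partial_y\psi\|_{H^1})+\|D^\varepsilon\partial_z\psi\|_{H^1}$, which is at most a multiple of $\|\psi\|_{\mathcal{W}^\varepsilon}$. So $C_l$ depends only on $\max_j D_j$.

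\textbf{Upper bound.} Set $f=\L^\varepsilon\psi$ and proceed in three steps.

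First, test with $\psi$. The Poincaré inequality (Lemma \ref{lemmaA5}) gives $\min_jD_j\|\nabla\psi\|^2\le\|f\|\|\psi\|\le H\|f\|\|\nabla\psi\|$, so $\|\psi\|_{H^1}\le C(H,\min D_j)\|f\|$.

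Second, test with $-\partial_{xx}\psi-\partial_{yy}\psi$. Integration by parts in the horizontal variables is legitimate since $D^\varepsilon$ is independent of $x,y$ and there are no horizontal boundary terms. Boundary terms in $z$ vanish because $\partial_x\psi=\partial_y\psi=0$ at $z=0,-H$. This yields
\[
\int D^\varepsilon|\nabla\partial_x\psi|^2+\int D^\varepsilon|\nabla\partial_y\psi|^2=(f,-\Delta_h\psi).
\]
Here $\Delta_h:=\partial_{xx}+\partial_{yy}$ denotes the horizontal Laplacian. By periodicity, $\|\Delta_h\psi\|^2=\|\partial_{xx}\psi\|^2+2\|\partial_{xy}\psi\|^2+\|\partial_{yy}\psi\|^2$, which is at most a constant times $\sum_{i=x,y}\|\nabla\partial_i\psi\|^2$. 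Young's inequality then gives $\|\nabla\partial_x\psi\|+\|\nabla\partial_y\psi\|\le C(\min D_j)\|f\|$.

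Third, read off the vertical flux from the equation. Since
\[
\partial_z(D^\varepsilon\partial_z\psi)=-f-D^\varepsilon\Delta_h\psi,
\]
we get $\|\partial_z(D^\varepsilon\partial_z\psi)\|\le C\|f\|$. The horizontal derivatives satisfy $\partial_x(D^\varepsilon\partial_z\psi)=D^\varepsilon\partial_z\partial_x\psi$, and similarly for $y$, so they are bounded by the second step. The $L^2$ norm of $D^\varepsilon\partial_z\psi$ is bounded by the first step. Together these give $\|D^\varepsilon\partial_z\psi\|_{H^1}\le C\|f\|$ with $C=C(H,D_j)$.

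\textbf{Main obstacle.} The key difficulty is to avoid expanding $\partial_z(D^\varepsilon\partial_z\psi)=(D^\varepsilon)'\partial_z\psi+D^\varepsilon\partial_{zz}\psi$, which would bring in the $\varepsilon^{-1}$ size of $(D^\varepsilon)'$. The plan handles this by keeping the flux $D^\varepsilon\partial_z\psi$ as the controlled quantity, which is exactly why the $\mathcal{W}^\varepsilon$-norm is defined through it.

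The integrations by parts in the second step also need justification. I would first prove the identities for $\psi$ smooth, or alternatively work with horizontal Fourier modes: for each mode the problem reduces to a one-dimensional Sturm--Liouville equation in $z$, and all identities are exact there. The general case then follows by density in $\mathcal{V}\cap H^2$, using that $\mathcal{W}^\varepsilon=\mathcal{V}\cap H^2$ for fixed $\varepsilon$. Uniformity in $\varepsilon$ is preserved because no constant ever involves $(D^\varepsilon)'$.
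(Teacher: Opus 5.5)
Your proposal is correct and follows the paper's proof essentially step for step. The lower bound comes from expanding $\L^\varepsilon\psi$. For the upper bound, you test $f=\L^\varepsilon\psi$ with $\psi$ and apply Poincar\'e, then test with horizontal second derivatives to control $\nabla\partial_x\psi$ and $\nabla\partial_y\psi$, and finally read off $\partial_z(D^\varepsilon\partial_z\psi)=-f-D^\varepsilon(\partial_x^2+\partial_y^2)\psi$ from the equation, so that $(D^\varepsilon)'$ never enters. The only differences are minor: you test with the full horizontal Laplacian at once rather than with $-\partial_x^2\psi$ and $-\partial_y^2\psi$ separately, and you justify the integrations by parts (via horizontal Fourier modes or density), which the paper leaves implicit.
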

\begin{proof} 
It is easy to check that 
\begin{equation}\nonumber
 \|\L^\varepsilon \psi \|_{L^2(\Omega)} 
 \leq  \| D^\varepsilon \partial_x^2 \psi \|_{L^2(\Omega)} 
 + \|  D^\varepsilon \partial_y^2 \psi \|_{L^2(\Omega)} 
 + \| \partial_z(D^\varepsilon \partial_z\psi) \|_{L^2(\Omega)} \leq (1+2\max_{j}D_j)\|\psi\|_{\mathcal{W}^\varepsilon}.
\end{equation}
for any $\psi \in \mathcal{W}^\varepsilon$. Next, for any $\psi\in \mathcal{W}^\varepsilon$, we denote $f := \L^\varepsilon \psi \in L^2(\Omega)$.
%
Then $\psi$ can be viewed as the unique strong solution to the elliptic problem
\begin{equation}\label{model1}
\left\{\begin{aligned}
  &\L^\varepsilon\psi = f &&\text{ in }\Omega,\\
  &\psi=0&& \text{for}\ z=0,-H, \quad \text{and periodic in the horizontal directions } (x,y).
\end{aligned}\right.
\end{equation}
Testing equation $\L^\varepsilon \psi =f$ by $\psi$ and using integration by parts, we get 
\begin{equation}\nonumber
\|\sqrt{D^\varepsilon} \nabla \psi \|_{L^2(\Omega)}^2\leq \| f \|_{L^2(\Omega)}\|\psi\|_{L^2(\Omega)}.  
\end{equation}
This combined with Lemma \ref{lemmaA5} yields 
\begin{equation}\label{A4-1}
 \| \nabla \psi \|_{L^2(\Omega)} \leq \frac{H}{\min_{j}D_j}\| f \|_{L^2(\Omega)},\quad \|\psi \|_{L^2(\Omega)}  \leq \frac{H^2}{\min_{j}D_j}\| f \|_{L^2(\Omega)}.
\end{equation} 
Next, testing equation $\L^\varepsilon \psi =f$ by $-\partial_x^2\psi$ and using integration by parts, we get
$$
(D^\varepsilon \nabla \partial_x\psi, \nabla \partial_x \psi)= -(f, \partial_x^2\psi), 
$$
which together with H\"{o}lder's and Young's inequalities yields that 
\begin{equation}\label{A4-2}
\| \nabla \partial_x \psi \|_{L^2(\Omega)} \leq  \frac{1}{\min_{j}D_j}\|f\|_{L^2(\Omega)} .
\end{equation}
Then we have 
\begin{equation}\label{A4-3}
\| \partial_x (D^\varepsilon \partial_z \psi) \|_{L^2(\Omega)} = \| D^\varepsilon \partial_z \partial_x \psi \|_{L^2(\Omega)} \leq \max_j D_j \| \partial_z\partial_x \psi \|_{L^2(\Omega)} \leq   \frac{\max_j D_j }{\min_{j}D_j}\|f\|_{L^2(\Omega)}. 
\end{equation}
A similar estimate holds for $\| \partial_y (D^\varepsilon \partial_z \psi) \|_{L^2(\Omega)}$. 
Finally, using the expression $-\partial_z (D^\varepsilon \partial_z \psi) = D^\varepsilon \partial_x ^2 \psi + D^\varepsilon \partial_y ^2 \psi + f$, we get 
\begin{equation}\label{A4-4}
\| \partial_z (D^\varepsilon \partial_z \psi) \|_{L^2(\Omega)} \leq  \left(\frac{2\max_j D_j}{\min_j D_j} +1\right)\| f \|_{L^2(\Omega)}.  
\end{equation}
Combining \eqref{A4-1}--\eqref{A4-4}, we arrive at the conclusion \eqref{A4-0}. The proof is complete.
\end{proof}
 
\begin{lemma}\label{lemmaA1}
Suppose $r\in [2,6]$. For any $\psi \in \mathcal{V}$, it holds that
   \begin{equation}\label{A1-0}
\|\psi\|_{L^r(\Omega)} \leq C_1 \|\psi\|_{L^2(\Omega)}^{ \frac{3}{r} -\frac12}\|\nabla\psi\|_{L^2(\Omega)}^{\frac32-\frac{3}{r}}.
  \end{equation}
Moreover, we have 
  \begin{align}\label{A1-1}
  & \|\nabla \psi\|_{L^r(\Omega)} \leq C_2\|\nabla\psi\|_{L^2(\Omega)}^{ \frac{3}{r} -\frac12}\|\psi\|_{\mathcal{W}}^{\frac32-\frac{3}{r}}, \quad \forall\,\psi\in \mathcal{W},
  \\
  \label{A1-2}
  &\|\nabla \psi\|_{L^r(\Omega)} \leq C_2\|\nabla\psi\|_{L^2(\Omega)}^{ \frac{3}{r} -\frac12}\|\psi\|_{\mathcal{W}^\varepsilon}^{\frac32-\frac{3}{r}}, \quad \forall\,\psi\in \mathcal{W}^\varepsilon.
  \end{align}
Here, the positive constants $C_1$, $C_2$ depend only on $r$, $\Omega$ and $D_j$.  
\end{lemma}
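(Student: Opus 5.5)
Proving Lemma \ref{lemmaA1}: for \eqref{A1-0} I would rely on the standard Gagliardo--Nirenberg inequality on the bounded Lipschitz domain $\Omega$ (periodic in $x,y$). For $r\in[2,6]$, Hölder interpolation between $L^2$ and $L^6$ gives
\[
\|\psi\|_{L^r}\le \|\psi\|_{L^2}^{\frac3r-\frac12}\|\psi\|_{L^6}^{\frac32-\frac3r}.
\]
The Sobolev embedding $H^1(\Omega)\hookrightarrow L^6(\Omega)$ combined with the Poincaré inequality of Lemma \ref{lemmaA5} (available since $\psi\in\mathcal{V}$ vanishes at $z=0,-H$) gives $\|\psi\|_{L^6}\le C\|\nabla\psi\|_{L^2}$. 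Substituting yields \eqref{A1-0} with $C_1$ depending only on $r$ and $\Omega$.

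For \eqref{A1-1} the same interpolation applied to $\nabla\psi$ reduces matters to the endpoint bound $\|\nabla\psi\|_{L^6}\le C\|\psi\|_{\mathcal{W}}$. This is the main obstacle, because $\psi\notin H^2(\Omega)$ in general: $\partial_z\psi$ jumps across the interfaces $z=z_j$.
\begin{itemize}
\item For the horizontal derivatives, $\partial_x\psi,\partial_y\psi\in H^1(\Omega)$ by definition of $\mathcal{W}$, so $H^1\hookrightarrow L^6$ gives $\|\partial_x\psi\|_{L^6}+\|\partial_y\psi\|_{L^6}\le C\|\psi\|_{\mathcal{W}}$.
\item For the vertical derivative, write $\partial_z\psi=D^{-1}(D\partial_z\psi)$. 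Here $D\partial_z\psi\in H^1(\Omega)$ and $D^{-1}$ is bounded by $1/\min_j D_j$, so
\[
\|\partial_z\psi\|_{L^6}\le (\min_j D_j)^{-1}\|D\partial_z\psi\|_{L^6}\le C(\min_jD_j)^{-1}\|D\partial_z\psi\|_{H^1}\le C\|\psi\|_{\mathcal{W}}.
\]
\end{itemize}
This trick avoids any regularity of $\partial_z\psi$ across interfaces. Combining the two parts with the interpolation gives \eqref{A1-1}, with $C_2$ depending on $r$, $\Omega$ and $D_j$.

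Finally, \eqref{A1-2} follows verbatim with $D$ replaced by $D^\varepsilon$ and $\mathcal{W}$ by $\mathcal{W}^\varepsilon$. Since $\min D^\varepsilon=\min_jD_j$ (the coefficient is piecewise linear between the layer values), the constant is independent of $\varepsilon$. Combined with Lemma \ref{lemma-equivalence}, this is also what makes the $\varepsilon$-uniform estimates used earlier legitimate.
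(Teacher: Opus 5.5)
Your proof is correct and follows essentially the paper's argument: the paper also gets \eqref{A1-0} from Gagliardo--Nirenberg plus the Poincar\'e inequality of Lemma~\ref{lemmaA5}, and gets \eqref{A1-1}--\eqref{A1-2} from the same key fact that the weighted field $(\partial_x\psi,\partial_y\psi,D\partial_z\psi)$ lies in $H^1(\Omega)$ and is pointwise comparable to $\nabla\psi$ through the bounds on $D$ (or $D^\varepsilon$). The difference is only in how this is packaged. The paper interpolates the weighted field and then converts both the $L^r$ and $L^2$ norms back to $\nabla\psi$, whereas you interpolate $\nabla\psi$ directly and use the weighted field only at the $L^6$ endpoint. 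Your ordering needs only the lower bound $D\ge\min_j D_j$, and it sidesteps the paper's slightly imprecise two-sided comparison, whose constants should really involve $\min(1,\min_j D_j)$ and $\max(1,\max_j D_j)$ because the horizontal components are unweighted.
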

\begin{proof}
The cases $r=2,6$ are standard (see \cite[Lemma 2.3]{CNW2025}). For any $\psi \in \mathcal{V}$ and $r\in (2,6)$, it follows from the classical Gagliardo--Nirenberg inequality that 
\begin{equation}
  \|\psi\|_{L^r (\Omega)} \leq C\|\nabla\psi\|_{L^2(\Omega)}^{\frac32-\frac{3}{r}}\|\psi\|_{L^2(\Omega)}^{ \frac{3}{r} -\frac12} 
  + C\|\psi\|_{L^2(\Omega)}.
  \notag 
\end{equation}
Since $\psi$ vanishes at the top and bottom of $\partial \Omega$, we infer from Lemma \ref{lemmaA5} that 
\begin{equation*}
\|\psi\|_{L^r(\Omega)}\leq C(1+H^{\frac32-\frac3r})\|\nabla\psi\|_{L^2(\Omega)}^{\frac32-\frac{3}{r}}\|\psi\|_{L^2(\Omega)}^{ \frac{3}{r} -\frac12} =: C_1 \|\nabla\psi\|_{L^2(\Omega)}^{\frac32-\frac{3}{r}}\|\psi\|_{L^2(\Omega)}^{ \frac{3}{r} -\frac12},
\end{equation*}
which gives \eqref{A1-0}. Next, we consider $\psi\in \mathcal{W}$. Set $\widetilde{\nabla}\psi := (\partial_x \psi,\partial_y \psi, D\partial_z\psi)$. By definition, it holds  $\widetilde{\nabla}\psi \in H^1(\Omega)$ and  
\begin{equation}
\|\widetilde{\nabla}\psi\|_{H^1(\Omega)}  \leq \|\psi\|_{\mathcal{W}}.
\notag 
\end{equation}
Applying the Gagliardo--Nirenberg inequality to $\widetilde{\nabla}\psi$, we get 
\begin{equation*}
\begin{aligned}
\|\widetilde{\nabla}\psi\|_{L^r(\Omega)}
&\leq  C\|\widetilde{\nabla}\psi\|_{L^2(\Omega)}^{ \frac{3}{r} -\frac12}\|\nabla(\widetilde{\nabla}\psi)\|_{L^2(\Omega)}^{\frac32-\frac{3}{r}}
+ C\|\widetilde{\nabla}\psi\|_{L^2(\Omega)}\\
&\leq C\|\widetilde{\nabla}\psi\|_{L^2(\Omega)}^{ \frac{3}{r} -\frac12}\|\widetilde{\nabla}\psi\|_{H^1(\Omega)}^{\frac32-\frac{3}{r}} + C\|\widetilde{\nabla}\psi\|_{L^2(\Omega)}\\
& \leq C\|\widetilde{\nabla}\psi\|_{L^2(\Omega)}^{ \frac{3}{r} -\frac12}\|\widetilde{\nabla}\psi\|_{H^1(\Omega)}^{\frac32-\frac{3}{r}}\\
& \leq C\|\widetilde{\nabla}\psi\|_{L^2(\Omega)}^{ \frac{3}{r} -\frac12}\|\psi\|_{\mathcal{W}}^{\frac32-\frac{3}{r}},
\end{aligned}
\end{equation*}
where $C>0$ depends on $r$ and $\Omega$. Using the fact  
\begin{equation*}
 \min_{j}D_j \|\nabla\psi\|_{L^r(\Omega)} \leq \|\widetilde{\nabla}\psi\|_{L^r(\Omega)} \leq \max_{j}D_j \|\nabla\psi\|_{L^r(\Omega)},
\end{equation*}
we can conclude \eqref{A1-1} with $C_2 = \frac{C\max_{j}D_j^{ \frac{3}{r} -\frac12}}{\min_{j} D_j}$.

Similarly, for $\psi\in \mathcal{W}^\varepsilon$, we define $ \nabla^\varepsilon\psi := (\partial_x \psi,\partial_y \psi, D^\varepsilon\partial_z\psi)$ so that  
\[\|\nabla^\varepsilon\psi\|_{H^1(\Omega)} 
\leq \|\psi\|_{\mathcal{W}^\varepsilon}.\]  
Applying the Gagliardo--Nirenberg inequality to $\nabla^\varepsilon\psi$ gives
\begin{equation*} 
\begin{aligned}
\|\nabla^\varepsilon\psi\|_{L^r(\Omega)} \leq C\|\nabla^\varepsilon\psi\|_{L^2(\Omega)}^{ \frac{3}{r} -\frac12}\|\psi\|_{\mathcal{W}^\varepsilon}^{\frac32-\frac{3}{r}}. 
\end{aligned}
\end{equation*}
Noticing that 
\begin{equation}\nonumber
   \min_{j}D_j \|\nabla\psi\|_{L^r(\Omega)} \leq \|\nabla^\varepsilon\psi\|_{L^r(\Omega)} \leq \max_{j}D_j \|\nabla\psi\|_{L^r(\Omega)} ,
\end{equation}
we obtain \eqref{A1-2}. The proof is complete. 
\end{proof}

\section{Fractional Sobolev spaces}\label{sec-appB}
Based on the expansion of eigenfunctions of the operator $\mathcal{L}$, for any $\psi = \sum_{k=1}^\infty a_k w_k$, we define 
\[ \mathcal{L}^s\psi := \sum_{k=1}^\infty \lambda_k^s a_k w_k,\quad s\in \mathbb{R}.\]
Then we define the space $\mathcal{H}^s=\mathrm{dom}(\mathcal{L}^\frac{s}{2})$ with the norm given by 
\begin{equation}\nonumber
\|\psi\|_{\mathcal{H}^s}^2 :=\sum_{k=1}^\infty \lambda_k^s|a_k|^2. 
\end{equation}

\begin{lemma}\label{lem-B1}
For any $-\infty<s_1<s_2 <\infty$ and any $\psi$ satisfying $\|\psi\|_{\mathcal{H}^{s_2}}<\infty$, we have 
\begin{equation}\nonumber
\|\psi\|_{\mathcal{H}^{s_1}} \leq \lambda_1^{s_1-s_2}\|\psi\|_{\mathcal{H}^{s_2}},
\end{equation}
and 
\begin{equation}\nonumber
  \|\psi\|_{\mathcal{H}^{s}} \leq \|\psi\|_{\mathcal{H}^{s_1}}^\theta\|\psi\|_{\mathcal{H}^{s_2}}^{1-\theta},~~~~\text{ for any } s= \theta s_1 + (1-\theta)s_2\text{ with }\theta\in (0,1).
\end{equation} 
\end{lemma}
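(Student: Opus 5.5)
The plan is to work entirely in the eigenbasis $\{w_k\}$ of $\mathcal{L}$. I write $\psi=\sum_k a_k w_k$ and reduce both assertions to elementary inequalities for the weighted sums $\sum_k \lambda_k^{\sigma}|a_k|^2$. The only structural facts I use are that $\{w_k\}$ is orthonormal in $\mathcal{H}$ and that $0<\lambda_1\le\lambda_2\le\cdots$, which hold by \cite[Lemma 2.1]{CNW2025} since $\mathcal{L}$ is positive and self-adjoint.

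\emph{First inequality.} Since $s_1-s_2<0$ and $\lambda_k\ge\lambda_1>0$, I have $\lambda_k^{s_1-s_2}\le\lambda_1^{s_1-s_2}$ for every $k$. Multiplying by $\lambda_k^{s_2}|a_k|^2$ and summing gives
\[
\|\psi\|_{\mathcal{H}^{s_1}}^2=\sum_k\lambda_k^{s_1}|a_k|^2\le\lambda_1^{s_1-s_2}\sum_k\lambda_k^{s_2}|a_k|^2=\lambda_1^{s_1-s_2}\|\psi\|_{\mathcal{H}^{s_2}}^2 .
\]
Taking square roots yields $\|\psi\|_{\mathcal{H}^{s_1}}\le\lambda_1^{(s_1-s_2)/2}\|\psi\|_{\mathcal{H}^{s_2}}$. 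To reach the constant exactly as stated, I compare the two powers of $\lambda_1$.
\begin{itemize}
\item If $\lambda_1\le1$, then $\lambda_1^{(s_1-s_2)/2}\le\lambda_1^{s_1-s_2}$ because the exponent $s_1-s_2$ is negative. So the stated bound follows directly.
\item If $\lambda_1>1$, the square-rooted sum-norm only gives the weaker factor $\lambda_1^{(s_1-s_2)/2}$. In this case the stated constant matches the operator-norm reading $\|\psi\|_{\mathcal{H}^{s}}\sim\|\mathcal{L}^{s}\psi\|_{L^2}$, i.e.\ weights $\lambda_k^{2s}$. With those weights the same termwise comparison $\lambda_k^{2(s_1-s_2)}\le\lambda_1^{2(s_1-s_2)}$ gives the factor $\lambda_1^{s_1-s_2}$ exactly after the square root.
\end{itemize}
This normalization issue is the main obstacle in proving the bound as worded. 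I would handle it by recording the sharp constant $\lambda_1^{(s_1-s_2)/2}$ from the computation above and noting which of the two cases or readings delivers the printed one. For every later use in the paper, only the existence of a finite constant depending on $\lambda_1$ matters.

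\emph{Interpolation inequality.} Let $s=\theta s_1+(1-\theta)s_2$. I write
\[
\lambda_k^{s}|a_k|^2=\big(\lambda_k^{s_1}|a_k|^2\big)^{\theta}\big(\lambda_k^{s_2}|a_k|^2\big)^{1-\theta}
\]
and apply H\"older's inequality for sums with exponents $1/\theta$ and $1/(1-\theta)$. This gives
\[
\|\psi\|_{\mathcal{H}^s}^2\le\|\psi\|_{\mathcal{H}^{s_1}}^{2\theta}\|\psi\|_{\mathcal{H}^{s_2}}^{2(1-\theta)}.
\]
Taking square roots gives the claim with constant $1$. The argument is insensitive to whether the weights are $\lambda_k^{s}$ or $\lambda_k^{2s}$, because $s$ enters linearly in the exponent. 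Finiteness of the right-hand side follows from the hypothesis $\|\psi\|_{\mathcal{H}^{s_2}}<\infty$ together with the first inequality applied to $s_1$.
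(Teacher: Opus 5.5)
Your proof is the paper's proof: the termwise bound $\lambda_k^{s_1-s_2}\le\lambda_1^{s_1-s_2}$, then the discrete H\"older inequality with exponents $1/\theta$ and $1/(1-\theta)$, and like the paper you actually obtain $\|\psi\|_{\mathcal{H}^{s_1}}^2\le\lambda_1^{s_1-s_2}\|\psi\|_{\mathcal{H}^{s_2}}^2$. Your suspicion about the constant is justified: with the paper's normalization $\|\psi\|_{\mathcal{H}^s}^2=\sum_k\lambda_k^s|a_k|^2$, the unsquared printed bound fails for $\psi=w_1$ when $\lambda_1>1$. So rather than appealing to the $\|\mathcal{L}^s\psi\|_{L^2(\Omega)}$ normalization (which conflicts with $\mathcal{H}^s=\mathrm{dom}(\mathcal{L}^{s/2})$), read the first assertion as the squared inequality; that is exactly what the paper's proof establishes, and it is all its later uses need.
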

\begin{proof}
According to \cite{CNW2025}, the eigenvalues $\{\lambda_k\}$ of the self-adjoint positive operator $\mathcal{L}$ satisfies
\[0 <\lambda_1\leq \lambda_2\leq \cdots.\]
Thus,
\begin{equation}\nonumber
  \|\psi\|_{\mathcal{H}^{s_1}} ^2 = \sum_{k=1}^\infty \lambda_k^{s_1} |a_k|^2  = \sum_{k=1}^\infty \frac{\lambda_k^{s_2} }{\lambda_k^{s_2-s_1}} |a_k|^2\leq \frac{1}{\lambda_1^{s_2-s_1}}\sum_{k=1}^\infty \lambda_k^{s_2} |a_k|^2=  \lambda_1^{s_1-s_2}\|\psi\|_{\mathcal{H}^{s_2}} ^2.
\end{equation}
The interpolation inequality follows from the discrete version of H\"older's inequality,
\begin{equation}\nonumber
\begin{aligned}
  \|\psi\|_{\mathcal{H}^{s}}^2 
  & =   \sum_{k=1}^\infty \lambda_k^{\theta s_1+(1-\theta)s_2} |a_k|^{2\theta + 2(1-\theta)   } 
  \leq  \left(  \sum_{k=1}^\infty \lambda_k^{s_1} |a_k|^{2}\right) ^\theta \left(  \sum_{k=1}^\infty \lambda_k^{s_2} |a_k|^{2 }\right)^{1-\theta} \\
  & = \|\psi\|_{\mathcal{H}^{s_1}}^{2\theta}\|\psi\|_{\mathcal{H}^{s_2}}^{2(1-\theta)}.
  \end{aligned}
\end{equation}
The proof is complete.
\end{proof}

\begin{lemma}\label{lemmaB2}
The norms $\|\cdot\|_{\mathcal{H}^0}$,  $\|\cdot\|_{\mathcal{H}^1}$ and $\|\cdot\|_{\mathcal{H}^2}$ are equivalent to the norms $\|\cdot\|_{L^2(\Omega)}$, $\|\cdot\|_{\mathcal{V}}$ and $\|\cdot\|_{\mathcal{W}}$, respectively.
\end{lemma}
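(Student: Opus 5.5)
The plan is to compare the spectral norms directly with the classical norms through the quadratic forms of $\mathcal{L}$. For $\psi=\sum_k a_k w_k$ we have $\|\psi\|_{\mathcal{H}^0}^2=\sum_k|a_k|^2=\|\psi\|_{L^2(\Omega)}^2$ by Parseval, since $\{w_k\}$ is an orthonormal basis of $\mathcal{H}=L^2(\Omega)$. So the case $s=0$ is an identity.

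For $s=1$, I will first take $\psi\in\mathbb{V}$, or a finite combination of the $w_k$, and use self-adjointness of $\mathcal{L}$ together with integration by parts (the interface and boundary conditions kill the boundary terms). This gives
\[
\|\psi\|_{\mathcal{H}^1}^2=\sum_k\lambda_k|a_k|^2=(\mathcal{L}\psi,\psi)=(D\nabla\psi,\nabla\psi)=\|\psi\|_{\mathcal{V}}^2 .
\]
Equivalently, the $w_k$ are orthogonal in $\mathcal{V}$ with $(w_k,w_m)_1=\lambda_k\delta_{km}$, so $\{w_k/\sqrt{\lambda_k}\}$ is an orthonormal basis of $\mathcal{V}$. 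Any $\psi\in\mathcal{V}$ then satisfies $\|\psi\|_{\mathcal{V}}^2=\sum_k|(\psi,w_k)_1|^2/\lambda_k=\sum_k\lambda_k|a_k|^2$, using $(\psi,w_k)_1=(\psi,\mathcal{L}w_k)=\lambda_k a_k$. This is again an identity. Combined with Lemma \ref{lemmaA5}, $\|\cdot\|_{\mathcal{V}}$ is equivalent to the full $H^1$ norm.

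For $s=2$, for $\psi\in\mathcal{W}$ the same expansion gives $\mathcal{L}\psi=\sum_k\lambda_k a_k w_k$. Here I use that $(\mathcal{L}\psi,w_k)=(\psi,\mathcal{L}w_k)=\lambda_k a_k$ by self-adjointness on $\mathcal{W}$. Hence $\|\psi\|_{\mathcal{H}^2}^2=\sum_k\lambda_k^2|a_k|^2=\|\mathcal{L}\psi\|_{L^2(\Omega)}^2$, and Lemma \ref{W-embedding} converts this into equivalence with $\|\cdot\|_{\mathcal{W}}$, with constants $C_l,C_u$.

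The main obstacle is the identification of the domains, which is needed so that the norm equivalences hold on whole spaces and not just on dense subsets. For the inclusion of $\mathrm{dom}(\mathcal{L})$ in $\mathcal{W}$, I would take $\psi$ with $\sum_k\lambda_k^2|a_k|^2<\infty$. Its partial sums lie in $\mathcal{W}$ and, by Lemma \ref{W-embedding}, form a Cauchy sequence in $\|\cdot\|_{\mathcal{W}}$. Since $\mathcal{W}$ is complete (it is defined by closed conditions in $H^1$), the limit lies in $\mathcal{W}$. The reverse inclusion is immediate from self-adjointness. A similar completeness argument in $\mathcal{V}$ handles $s=1$.
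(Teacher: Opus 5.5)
Your proposal is correct and follows essentially the same route as the paper: Parseval for $s=0$, the expansion of $\psi\in\mathcal{V}$ in the $\mathcal{V}$-orthogonal basis $\{w_k\}$ for $s=1$, and the identity $\|\mathcal{L}\psi\|_{L^2(\Omega)}^2=\sum_k\lambda_k^2|a_k|^2$ combined with Lemma \ref{W-embedding} for $s=2$. Your version is slightly tidier than the paper's in two respects: you normalize the $\mathcal{V}$-expansion correctly (via $w_k/\sqrt{\lambda_k}$ and $(\psi,w_k)_1=\lambda_k a_k$), and you supply the completeness argument identifying $\mathrm{dom}(\mathcal{L})$ with $\mathcal{W}$, which the paper takes for granted; one caveat is that elements of $\mathbb{V}$ need not satisfy flux continuity, so for a general $\psi\in\mathbb{V}$ the pairing $(\mathcal{L}\psi,\psi)$ must be read in $\mathcal{V}^*\times\mathcal{V}$, but your second argument for $s=1$ avoids this.
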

\begin{proof}
Any $\psi\in \mathcal{H}$ can be written as a series in terms of the orthonormal basis $\{w_k\}$ in $\mathcal{H}$ such that  
\[ \psi = \sum_{k=1}^\infty (\psi, w_k) w_k.\]
Then its $L^2$-norm is given by Parseval's identity
\begin{equation}
  \|\psi\|_{L^2(\Omega)}^2 = \sum_{k=1}^\infty |(\psi,w_k)|^2.
  \notag 
\end{equation}
This implies that $\|\cdot\|_{L^2(\Omega)}$ and $\|\cdot\|_{\mathcal{H}^0}$ are equivalent norms on $\mathcal{H}$.

For the case $s= 1$, since $\{w_k\}$ is also a orthogonal basis in $\mathcal{V}$, any $\psi \in \mathcal{V}$ can be written as a series 
\[ \psi =  \sum_{k=1}^\infty (\psi, w_k)_1 w_k := \sum_{k=1}^\infty (D\nabla \psi, \nabla w_k) w_k ,\]
which converges in $\mathcal{V}$. We have the norm 
\begin{equation}
\|\psi\|_{\mathcal{V}}^2 = \sum_{k=1}^\infty |(\psi, w_k)_1|^2  (w_k, w_k)_1  = \sum_{k=1}^\infty |(\psi, w_k)_1|^2 (\mathcal{L}w_k, w_k) = \sum_{k=1}^\infty |(\psi, w_k)_1|^2 \lambda_k, \notag 
\end{equation}
which is also equivalent to the norm $\|\psi \|_{\mathcal{H}^1}$.

Finally, any $\psi\in\mathcal{W} \subset \mathcal{H}$ can be expanded as $\psi = \sum_{k=1}^\infty a_k w_k$, where the series converges at least in $L^2(\Omega)$.  Then we have 
\begin{equation}
\mathcal{L}\psi =   \sum_{k=1}^\infty a_k \mathcal{L}w_k =  \sum_{k=1}^\infty a_k \lambda_k w_k. 
\notag 
\end{equation}
Consequently,
\begin{equation}
 \|\mathcal{L}\psi\|_{L^2(\Omega)}^2= \sum_{k=1}^\infty  \lambda_k^2|a_k|^2= \|\psi\|_{\mathcal{H}^2}^2. \notag 
\end{equation} 
These, together with Lemma \ref{lemma-equivalence}, show that $\|\cdot\|_{\mathcal{H}^2}$ and $\|\cdot\|_{\mathcal{W}}$ are equivalent norms on $\mathcal{W}$.
\end{proof}

We have the following equivalence on the fractional norm $\mathcal{H}^s$ and the classical Sobolev norm $H^s(\Omega)$, $s\in (0,1)$, whose proof is inspired by \cite[Appendix B]{BZ2000}.

\begin{lemma}\label{lem-B2}
Let $s\in (0,1)$. The fractional $\mathcal{H}^s$-norm can be interpreted as the interpolation norm between $\mathcal{H}$ and $\mathcal{V}$, and is equivalent to the fractional Sobolev norm $\|\cdot\|_{H^s(\Omega)}$. Furthermore, the fractional $\mathcal{H}^{1+s}$-norm can be interpreted as the interpolation norm between $\mathcal{V}$ and $\mathcal{W}$.
\end{lemma}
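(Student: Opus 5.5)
The proof rests on the fact that the spectral definition of $\mathcal{H}^s$ coincides with complex (or real $K$-method) interpolation between the form domains. By construction $\mathcal{L}$ is self-adjoint and positive on $\mathcal{H}$, with $\mathrm{dom}(\mathcal{L}^{1/2})=\mathcal{V}$ and $\mathrm{dom}(\mathcal{L})=\mathcal{W}$; the norms are equivalent by Lemma \ref{lemmaB2}. The standard theory of fractional powers of positive self-adjoint operators (Lions--Magenes, or via the spectral theorem and the explicit $K$-functional $K(t,\psi)^2\simeq\sum_k \min(1,t^2\lambda_k)|a_k|^2$) then gives
\[
[\mathcal{H},\mathcal{V}]_{s}=\mathrm{dom}(\mathcal{L}^{s/2})=\mathcal{H}^s,\qquad [\mathcal{V},\mathcal{W}]_{s}=\mathrm{dom}(\mathcal{L}^{(1+s)/2})=\mathcal{H}^{1+s},
\]
with equivalent norms. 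I would verify the $K$-functional formula directly in the eigenbasis $\{w_k\}$. Integrating $t^{-2s-1}K(t,\psi)^2$ over $(0,\infty)$ gives $c_s\sum_k\lambda_k^s|a_k|^2$, and the same computation with $\lambda_k$ shifted by one power handles the pair $(\mathcal{V},\mathcal{W})$. This settles both interpolation claims.

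For the equivalence with $H^s(\Omega)$, I would identify $\mathcal{H}=L^2(\Omega)$ and $\mathcal{V}=H^1_{(0)}(\Omega)$. The remaining task is to show $[L^2(\Omega),H^1_{(0)}(\Omega)]_s=H^s(\Omega)$ (with the zero trace on $z=0,-H$ understood for $s>\tfrac12$), and this step does not involve $D$ at all. Because $\Omega$ is a periodic slab, Fourier series in $(x,y)$ reduce the problem to the interval $(-H,0)$ with Dirichlet conditions at both ends, tensorized with the horizontal modes. There one can use the known identification of interpolation spaces between $L^2$ and $H^1_0$: it equals $H^s$ for $s<\tfrac12$, $H^{1/2}_{00}$ at $s=\tfrac12$, and $H^s_0$ for $s>\tfrac12$. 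Since $\mathcal{H}^s$ is defined as the closure of $\mathbb{V}$, which already vanishes at $z=0,-H$, this is exactly the class intended. Alternatively, following \cite[Appendix B]{BZ2000}, I would build a bounded extension/retraction (odd reflection across $z=0,-H$ plus periodization) mapping $L^2\to L^2(\mathbb{T}^3)$ and $H^1_{(0)}\to H^1(\mathbb{T}^3)$, and then use the exact Fourier interpolation on the torus.

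The main obstacle is the endpoint subtlety at $s=\tfrac12$ (the $H^{1/2}_{00}$ issue). It is irrelevant here only because the paper uses $s\in(\tfrac12,1)$ in applications, so I would phrase the equivalence with $H^s$ as holding modulo the trace condition. The key point to stress is that the discontinuity of $D$ never enters the $\mathcal{H}^s$ part: it only fixes the equivalent norm on $\mathcal{V}$, through $\min_j D_j\le D\le\max_j D_j$. For the $\mathcal{H}^{1+s}$ part, the discontinuity enters solely through the characterization of $\mathcal{W}$, which is already given by Lemma \ref{W-embedding}.
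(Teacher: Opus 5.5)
Your proposal is correct. For the two interpolation identities it follows the paper's argument. The paper also works in the eigenbasis $\{w_k\}$, but it minimizes the $K$-functional exactly: the minimizer is $b_k=a_k/(1+t^2\lambda_k)$, so $K^2=\sum_k t^2\lambda_k a_k^2/(1+t^2\lambda_k)$. This gives the identity $\|\psi\|_{\mathcal{H}^s}=\sqrt{\tfrac{2}{\pi}\sin \pi s}\,\|\psi\|_{s,\mathcal{H},\mathcal{V}}$ with an exact constant, and the paper repeats the computation with one more power of $\lambda_k$ for the pair $(\mathcal{V},\mathcal{W})$. Your two-sided bound by $\min(1,t^2\lambda_k)$ gives the same result up to constants.

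You go beyond the paper on the equivalence with $\|\cdot\|_{H^s(\Omega)}$. The paper's proof stops after the $K$-functional computation and never argues this part; it relies tacitly on $\mathcal{H}=L^2(\Omega)$ and $\mathcal{V}=H^1_{(0)}(\Omega)$. Either of your reductions supplies this missing step: to the spectral scale of the constant-coefficient Dirichlet--periodic Laplacian plus the Lions--Magenes description of $[L^2,H^1_0]_s$ on an interval, or to Fourier analysis on a torus via odd reflection.

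Your caveat is a genuine correction of the statement. The equivalence holds for $s<\tfrac12$, and for $s>\tfrac12$ it holds on $\mathcal{H}^s$ because its elements have zero trace at $z=0,-H$. At $s=\tfrac12$, however, one gets $H^{1/2}_{00}$ in the $z$-variable, whose norm is strictly stronger than the $H^{1/2}(\Omega)$ norm, so the lemma as written fails there. This is harmless, since the paper only uses $s\in(\tfrac12,1)$.

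If you take the reflection route, two details need care:
\begin{itemize}
\item The retraction must be restriction of the $z$-odd part; plain restriction does not map $H^1$ of the torus into $H^1_{(0)}(\Omega)$.
\item Identifying restrictions of odd $H^s$ functions on the torus with $H^s(\Omega)$ (with zero trace when $s>\tfrac12$) is exactly the reflection/zero-extension lemma, and that lemma requires $s\neq\tfrac12$.
\end{itemize}
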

\begin{proof}
Let us introduce the interpolation norm between $\mathcal{H}$ and $\mathcal{V}$ using the K-method \cite{Triebel}:
\begin{equation}\nonumber
\|\psi\|_{s,\mathcal{H},\mathcal{V}} := \left(\int_0^\infty t^{-2s} K^2(t,\psi,\mathcal{H},\mathcal{V}) \frac{\dd t}{t}\right)^\frac12,~~~~\text{ for }s\in (0,1),
\end{equation}
where  the $K$-functor is given by 
\[ K^2(t,\psi,\mathcal{H},\mathcal{V}) :  = \inf_{\psi_1 \in \mathcal{V}}  \|\psi-\psi_1\|_{L^2(\Omega)}^2+ t^2\|\psi_1\|_{\mathcal{V}}^2.\]
Let 
\[\psi = \sum_{k=1}^\infty a_k w_k ~~~~\text{ and }~~~~\psi_1 = \sum_{k=1}^\infty b_k w_k.\]
Then 
\[\|\psi-\psi_1\|_{L^2(\Omega)}^2+ t^2\|\psi_1\|_{\mathcal{V}}^2  = \sum_{k=1}^\infty (a_k - b_k)^2+t^2 b_k^2 \|w_k\|_{\mathcal{V}}^2 = \sum_{k=1}^\infty (a_k - b_k)^2+t^2 b_k^2  \lambda_k. \] 
Solving the minimization problem gives 
\begin{equation}
K^2(t,\psi,\mathcal{H},\mathcal{V})  = \sum_{k=1}^\infty\left. (a_k - b_k)^2+t^2 b_k^2  \lambda_k\right|_{b_k = \frac{a_k^2}{t^2 \lambda_k +1}} = \sum_{k=1}^\infty \frac{a_k^2t^2\lambda_k }{1+t^2 \lambda_k}. \notag 
\end{equation} 
Consequently, 
\begin{equation}\nonumber
 \int_0^\infty t^{-2s}  K^2(t,\psi,\mathcal{H},\mathcal{V})  \frac{\dd t}{t}  = \sum_{k=1}^\infty   \int_0^\infty t^{1-2s}   \frac{a_k^2 \lambda_k }{1+t^2 \lambda_k} \dd t  =  \left(\frac{2}{\pi}\sin \pi s\right)^{-1}\sum_{k=1}^\infty \lambda_k^s |a_k|^2.
\end{equation}
Hence, $\|\psi\|_{\mathcal{H}^s} =\sqrt{\frac{2}{\pi}\sin \pi s}\|\psi\|_{s,\mathcal{H},\mathcal{V}}$ for any $\psi \in \mathcal{V}$.

Similarly, the interpolation norm between $\mathcal{V}$ and $\mathcal{W}$ is defined as 
\begin{equation}\nonumber
\|\psi\|_{s,\mathcal{V},\mathcal{W}} := \left(\int_0^\infty t^{-2s} K^2(t,\psi,\mathcal{V},\mathcal{W}) \frac{\dd t}{t}\right)^\frac12,~~~~\text{ for }s\in (0,1),
\end{equation}
where the $K$-functor becomes
\[ K^2(t,\psi,\mathcal{V},\mathcal{W}):  = \inf_{\psi_1 \in \mathcal{W}}  \|\psi-\psi_1\|_{\mathcal{V}}^2+ t^2\|\psi_1\|_{\mathcal{W}}^2.\]
By Lemma \ref{lemma-equivalence}, we can use the following equivalent form of $K^2(t,\psi,\mathcal{V},\mathcal{W})$: 
\[ \inf_{\psi_1 \in \mathcal{W}}  \|\psi-\psi_1\|_{\mathcal{V}}^2+ t^2\|\mathcal{L}\psi_1\|_{L^2(\Omega)}^2.\]
In a similar way, we have
\[ \inf_{\psi_1 \in \mathcal{W}}  \|\psi-\psi_1\|_{\mathcal{V}}^2+ t^2\|\mathcal{L}\psi_1\|_{L^2(\Omega)}^2 = \sum_{k=1}^\infty\left. (a_k - b_k)^2\lambda_k +t^2 b_k^2 \lambda_k^2\right|_{b_k = \frac{a_k^2}{t^2 \lambda_k +1}} = \sum_{k=1}^\infty \frac{a_k^2t^2\lambda_k^2 }{1+t^2 \lambda_k}, \] 
and hence,
\[
\sum_{k=1}^\infty   \int_0^\infty t^{1-2s}   \frac{a_k^2 \lambda_k^2  }{1+t^2 \lambda_k} \dd t  =  \left(\frac{2}{\pi}\sin \pi s\right)^{-1}\sum_{k=1}^\infty \lambda_k^{s+ 1} |a_k|^2 = \left(\frac{2}{\pi}\sin \pi s\right)^{-1} \|\psi\|_{\mathcal{H}^{1+s}}^2.\]
This implies the equivalence of  $\|\psi\|_{s, \mathcal{V},\mathcal{W}}$ and $\|\psi\|_{\mathcal{H}^{1+s}} $ for any $\psi \in \mathcal{W}$. 
\end{proof}

We note that $\|\cdot\|_{\mathcal{H}^{1+s}}$ is not equivalent to $\|\cdot\|_{H^{1+s}(\Omega)}$, since the space $\mathcal{W}$ is different from the classical $H^2(\Omega)$ space in general, see \cite{CNW2025}. Nevertheless, we have the following result

\begin{lemma}\label{lem-B3}
For any $s\in (0,1)$ and $j=1,2,\cdots,l$, there exists a constant $C>0$ such that it holds that
\begin{equation}
 \|\psi\|_{H^{1+s}(\Omega_j)} \leq C\|\psi\|_{\mathcal{H}^{1+s}},\quad \forall\, \psi \in \mathcal{W}, \notag 
\end{equation}
where $\|\cdot\|_{H^{1+s}(\Omega_j)}$ is the fractional Sobolev norm on the layer $\Omega_j$.
\end{lemma}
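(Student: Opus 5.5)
The strategy is to interpolate between $\mathcal{V}$ and $\mathcal{W}$ layer by layer, transferring the abstract interpolation characterization of Lemma \ref{lem-B2} to the classical Sobolev scale on each $\Omega_j$. We treat $s\in(0,1)$ and a fixed $j$. The restriction map $R_j:\psi\mapsto\psi|_{\Omega_j}$ will be viewed as a linear operator on two endpoint pairs of spaces, shown bounded at each endpoint, and then interpolated.

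\textbf{Step 1 (lower endpoint).} For $\psi\in\mathcal{V}$ we have trivially
\[
\|R_j\psi\|_{H^1(\Omega_j)}\le\|\psi\|_{H^1(\Omega)}\le C\|\psi\|_{\mathcal{V}},
\]
by the Poincaré inequality (Lemma \ref{lemmaA5}) and the lower bound $D\ge\min_j D_j$.

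\textbf{Step 2 (upper endpoint).} For $\psi\in\mathcal{W}$, inside $\Omega_j$ the coefficient $D\equiv D_j$ is constant. Hence $\partial_z\psi=D_j^{-1}(D\partial_z\psi)\in H^1(\Omega_j)$, while $\partial_x\psi,\partial_y\psi\in H^1(\Omega)$ by the definition of $\mathcal{W}$. Therefore
\[
\|R_j\psi\|_{H^2(\Omega_j)}\le C\|\psi\|_{\mathcal{W}}\le C\|\mathcal{L}\psi\|_{L^2(\Omega)}=C\|\psi\|_{\mathcal{H}^2},
\]
using Lemma \ref{W-embedding} and Lemma \ref{lemmaB2}.

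\textbf{Step 3 (interpolation).} Apply the K-method interpolation functor $(\cdot,\cdot)_{s,2}$ to $R_j$. It maps
\[
(\mathcal{V},\mathcal{W})_{s,2}\to(H^1(\Omega_j),H^2(\Omega_j))_{s,2}
\]
with norm at most $C^{1-s}C^{s}$. This can be checked directly from the definition of the $K$-functional: for any splitting $\psi=(\psi-\psi_1)+\psi_1$ with $\psi_1\in\mathcal{W}$, the restricted pieces give an admissible splitting for the target pair, so
\[
K(t,R_j\psi)\le C\,K(t,\psi).
\]
By Lemma \ref{lem-B2}, the source norm is equivalent to $\|\psi\|_{\mathcal{H}^{1+s}}$ for $\psi\in\mathcal{W}$.

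\textbf{Main obstacle.} The remaining point is that $(H^1(\Omega_j),H^2(\Omega_j))_{s,2}=H^{1+s}(\Omega_j)$ with equivalent norms. Here $\Omega_j$ is a periodic slab $\mathbb{T}^2\times(z_j,z_{j-1})$, a Lipschitz (indeed smooth in the periodic sense) domain. A universal extension operator (Stein, or reflection across $z=z_j,z_{j-1}$ of sufficiently high order) maps $H^k(\Omega_j)\to H^k(\mathbb{T}^2\times\mathbb{R})$ boundedly for $k=1,2$ simultaneously. The identity then follows from the classical result on $\mathbb{R}^3$/the torus via Fourier multipliers, together with a retraction argument, as in Triebel \cite{Triebel}, giving equivalence with the Sobolev--Slobodeckij norm. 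I would cite this rather than reprove it, being careful only that one extension operator works for both $k=1$ and $k=2$; a higher-order reflection does.

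Combining Steps 1--3 yields $\|\psi\|_{H^{1+s}(\Omega_j)}\le C\|\psi\|_{\mathcal{H}^{1+s}}$, with $C$ depending on $s$, $D_j$ and $\Omega$.
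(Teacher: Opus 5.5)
Your proposal is correct and follows the same route as the paper. Both arguments bound the $K$-functional of $\psi|_{\Omega_j}$ by that of $\psi$ for the pair $(\mathcal{V},\mathcal{W})$, use Lemma~\ref{lem-B2} to identify the latter interpolation norm with $\|\psi\|_{\mathcal{H}^{1+s}}$, and then identify the layer-wise interpolation space with $H^{1+s}(\Omega_j)$. You target the unconstrained pair $(H^1(\Omega_j),H^2(\Omega_j))$ and justify the upper endpoint explicitly via $D\equiv D_j$ on $\Omega_j$. That is slightly cleaner than the paper's use of the restricted spaces $\mathcal{V}|_{\Omega_j},\mathcal{W}|_{\Omega_j}$, which carry a boundary condition on the top and bottom layers, because you only need the standard embedding $(H^1(\Omega_j),H^2(\Omega_j))_{s,2}\hookrightarrow H^{1+s}(\Omega_j)$.
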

\begin{proof}
For any $j=1,\cdots,l$, denote 
\[ \mathcal{V}|_{\Omega_j}=\{\psi|_{\Omega_j}:~\psi\in \mathcal{V}\}~~~~\text{ and }~~~~~\mathcal{W}|_{\Omega_j}=\{\psi|_{\Omega_j}:~\psi\in \mathcal{W}\}.\]
Then the interpolation norm between $\mathcal{V}|_{\Omega_j}$ and $\mathcal{W}|_{\Omega_j}$ is defined as
\[ \|\psi\|_{s, \mathcal{V}|_{\Omega_j}, \mathcal{W}|_{\Omega_j}} 
:=  \left(\int_0^\infty t^{-2s} K^2(t,\psi, \mathcal{V}|_{\Omega_j}, \mathcal{W}|_{\Omega_j}) \frac{\dd t}{t}\right)^\frac12,~~~~\text{ for }s\in (0,1),\]
where 
\begin{equation*}
K^2(t,\psi, \mathcal{V}|_{\Omega_j}, \mathcal{W}|_{\Omega_j}) 
= \inf_{\psi_1 \in \mathcal{W}|_{\Omega_j}}  \|\psi-\psi_1\|_{\mathcal{V}|_{\Omega_j}}^2 
+ t^2\|\psi_1\|_{\mathcal{W}|_{\Omega_j}}^2.
\end{equation*}
Noting that 
\begin{equation*} 
K^2(t,\psi, \mathcal{V}|_{\Omega_j}, \mathcal{W}|_{\Omega_j})
\leq\inf_{\psi_1\in \mathcal{W}}  \|\psi-\psi_1\|_{\mathcal{V}}^2 
+ t^2\|\psi_1\|_{\mathcal{W}}^2
=K^2(t,\psi, \mathcal{V},\mathcal{W}),
\end{equation*}
one has 
\begin{equation*}
  \|\psi\|_{s, \mathcal{V}|_{\Omega_j}, \mathcal{W}|_{\Omega_j}} \leq \|\psi\|_{s, \mathcal{V},\mathcal{W}},\quad \forall\,\psi\in \mathcal{W}.
\end{equation*}
Recall that any function $\psi$ in $\mathcal{W}$ is piecewise $H^2(\Omega)$. Then the restriction norm $\|\cdot\|_{\mathcal{W}|_{\Omega_j}}$ coincides with the classical Sobolev norm $\|\cdot\|_{H^2(\Omega_j)}$, and the interpolation norm $\|\cdot\|_{s, \mathcal{V}|_{\Omega_j}, \mathcal{W}|_{\Omega_j}}$ is also equivalent to the fractional Sobolev norm $\|\cdot\|_{\mathcal{H}^{1+s}(\Omega_j)}$. 
Hence,
\begin{equation}\nonumber
  \|\psi\|_{H^{1+s}(\Omega_j)} \leq C  \|\psi\|_{s, \mathcal{V}|_{\Omega_j}, \mathcal{W}|_{\Omega_j}} \leq C \|\psi\|_{s, \mathcal{V},\mathcal{W}} \leq C\|\psi\|_{\mathcal{H}^{1+s}},\quad \forall\, \psi \in \mathcal{W}.
\end{equation}
The proof is complete.
\end{proof}

\begin{remark}
Similar results hold for the operators $\mathcal{L}^\varepsilon$ and the spaces $\mathcal{H}_\varepsilon^s$, $\mathcal{W}^\varepsilon$. By construction of $D^\varepsilon$, the generic positive constant $C$ in the corresponding estimates is independent of the thickness of the transition layer $\varepsilon$.
\end{remark}

\end{document}